\documentclass[10pt]{article} %
\usepackage[preprint]{tmlr}

\usepackage{amsmath,amsfonts,bm}

\def\eqref#1{equation~\ref{#1}}

\def\1{\bm{1}}

\DeclareMathAlphabet{\mathsfit}{\encodingdefault}{\sfdefault}{m}{sl}
\SetMathAlphabet{\mathsfit}{bold}{\encodingdefault}{\sfdefault}{bx}{n}

\newcommand{\E}{\mathbb{E}}

\newcommand{\Var}{\mathrm{Var}}

\usepackage{amsfonts, amsmath, amssymb}
\usepackage{mathrsfs}
\usepackage{bm}

\newcommand{\LL}{\left}
\newcommand{\RR}{\right}
\newcommand{\msf}[1]{\mathsf{#1}}
\newcommand{\bA}{\mathbb{A}}

\newcommand{\bE}{\mathbb{E}}

\newcommand{\bH}{\mathbb{H}}

\newcommand{\bN}{\mathbb{N}}

\newcommand{\bP}{\mathbb{P}}

\newcommand{\bR}{\mathbb{R}}
\newcommand{\bS}{\mathbb{S}}

\newcommand{\X}{\mathbf{X}}
\newcommand{\bY}{\mathbb{Y}}
\newcommand{\bZ}{\mathbb{Z}}

\newcommand{\ui}[1]{^{(#1)}}
\newcommand{\z}{\bar{z}}
\newcommand{\bv}{\bm{v}}

\newcommand{\cA}{\mathcal{A}}

\newcommand{\cD}{\mathcal{D}}

\newcommand{\cF}{\mathscr{F}}
\newcommand{\cG}{\mathcal{G}}
\newcommand{\cH}{\mathscr{H}}
\newcommand{\cI}{\mathcal{I}}
\newcommand{\cJ}{\mathcal{J}}

\newcommand{\cL}{\mathscr{L}}

\newcommand{\cN}{\mathcal{N}}
\newcommand{\cO}{\mathcal{O}}
\newcommand{\cP}{\mathcal{P}}
\newcommand{\cQ}{\mathcal{Q}}
\newcommand{\cR}{\mathcal{R}}

\newcommand{\cV}{\mathcal{V}}

\newcommand{\bpi}{{\bm{\pi}}}
\newcommand{\gz}{{>0}}
\newcommand{\Wii}{{W_{ii}}}

\newcommand{\uind}[1]{{({#1})}}

\newcommand{\W}{\mathbf{W}}
\newcommand{\U}{\mathbf{U}}
\newcommand{\Z}{\bar{Z}}

\newcommand{\fH}{{H}}

\newcommand{\fc}{{c}}

\usepackage{tikz}
\newcommand*\circled[1]{\tikz[baseline=(char.base)]{
            \node[shape=circle,draw,inner sep=1.2pt] (char) {\color{black}\scriptsize \textbf{#1}};}}

\definecolor{maroon}{RGB}{190,0,42}
\newcommand{\bTheta}{\bar{\Theta}}

\usepackage{caption}
\usepackage{subcaption}

\usepackage{wrapfig}

\newcommand{\ball}{\mathcal{B}_{2,\infty}}

\usepackage{hyperref}
\usepackage{url}
\usepackage{mathtools}  %

\usepackage{algorithm}
\usepackage{algpseudocode}
\usepackage{bbm}
\newcommand{\ccH}{\mathcal{H}}
\newcommand{\pol}[1]{{\pi^{\Phi(#1)}}}
\newcommand{\scL}{\mathscr{L}}
\usepackage{graphicx}
\usepackage{enumitem}
\usepackage{subcaption}
\newcommand{\revision}[1]{{\color{black}#1}}

\usepackage{amsmath}
\usepackage{amssymb}
\usepackage{mathtools}
\usepackage{amsthm}

\theoremstyle{plain}
\newtheorem{theorem}{Theorem}[section]
\newtheorem{proposition}[theorem]{Proposition}
\newtheorem{lemma}[theorem]{Lemma}

\theoremstyle{definition}
\newtheorem{definition}[theorem]{Definition}
\newtheorem{assumption}[theorem]{Assumption}
\theoremstyle{remark}
\newtheorem{remark}[theorem]{Remark}

\title{Recurrent Natural Policy Gradient for POMDPs}

\author{\name Semih Cayci \email cayci@mathc.rwth-aachen.de \\
      \addr Department of Mathematics\\
      RWTH Aachen University
      \AND
      \name Atilla Eryilmaz \email eryilmaz.2@osu.edu \\
      \addr Department of Electrical and Computer Engineering
      \\ The Ohio State University}

\begin{document}

\maketitle

\begin{abstract}

Solving partially observable Markov decision processes (POMDPs) remains a fundamental challenge in reinforcement learning (RL), primarily due to the curse of dimensionality induced by the non-stationarity of optimal policies. In this work, we study a natural actor-critic (NAC) algorithm that integrates recurrent neural network (RNN) architectures into a natural policy gradient (NPG) method and a temporal difference (TD) learning method. This framework leverages the representational capacity of RNNs to address non-stationarity in RL to solve POMDPs while retaining the statistical and computational efficiency of natural gradient methods in RL. We provide non-asymptotic theoretical guarantees for this method, including bounds on sample and iteration complexity to achieve global optimality up to function approximation. Additionally, we characterize pathological cases that stem from long-term dependencies, thereby explaining limitations of RNN-based policy optimization for POMDPs.

\end{abstract}

\section{Introduction}
Reinforcement learning (RL) for partially observable Markov decision processes (POMDPs) has been a particularly challenging problem due to the absence of an optimal stationary policy, which leads to a curse of dimensionality as the space of non-stationary policies grows exponentially over time \citep{krishnamurthy2016partially, murphy2000survey}. To address this curse of dimensionality in solving POMDPs, finite-memory \citep{yu2008near, yu2012function, kara2023convergence, cayci2023finitetime} and RNN-based \citep{lin1993reinforcement, whitehead1995reinforcement, wierstra2010recurrent, mnih2014recurrent, ni2021recurrent, lu2024rethinking} model-free RL approaches are widely used to solve POMDPs. Despite the empirical success of RNN-based model-free RL methods, a rigorous theoretical understanding of their performance in the POMDP setting remains limited.

We begin by outlining two key observations that motivate our approach:

\textbf{Observation 1.} Recurrent neural networks (RNNs) have been extensively employed in model-free reinforcement learning (RL) to solve partially observable Markov decision processes (POMDPs) \citep{whitehead1995reinforcement, wierstra2010recurrent, mnih2014recurrent}. Recent work \cite{ni2021recurrent} demonstrates that RNN-based model-free RL can perform competitively with more sophisticated and structured approaches under appropriate hyperparameter and architecture choices. In \cite{lu2024rethinking}, shortcomings of emerging transformers in solving POMDPs were demonstrated, and it was shown, somewhat surprisingly, that particular recurrent architectures can achieve superior practical performance in certain scenarios. However, despite this plethora of works that demonstrate the effectiveness of RNN-based model-free algorithms for solving POMDPs, a concrete theoretical understanding of these methods is still in a nascent stage. This is particularly important since, as noted by \cite{ni2021recurrent}, RNN-based model-free RL algorithms are sensitive to optimization parameters, and identification of provably good choices is important for practice.

\textbf{Observation 2.} Natural policy gradient (NPG) framework has been shown to be effective in solving MDPs due to its versatility in encompassing powerful function approximators, such as deep neural networks \citep{wang2019neural, cayci2024finitetime}. However, a na\"{i}ve application of such non-recurrent model-free RL algorithms to solve POMDPs has been observed to be ineffective \citep{ni2021recurrent}, which necessitate careful incorporation of recurrent architectures into the policy optimization framework. This calls for the need to incorporate and analyze policy optimization, particularly NPG framework, augmented with recurrent architectures, to obtain a provably effective solution for POMDPs.

Our study is motivated by these observations and guided by the following key questions, each addressed in this work:
\paragraph{\textbf{Q$_1$.} How can we achieve (i) provably effective and (ii) computation/memory-efficient policy evaluation for non-stationary policies in partially observable environments?}\mbox{}\\ $\bm{\triangleright}$ A temporal difference (TD) learning algorithm with an IndRNN (Rec-TD) overcomes the so-called \emph{perceptual aliasing} problem imperative in memoryless TD learning for POMDPs \citep{singh1994learning}, and achieves \emph{near-optimal} policy evaluation, provided a sufficiently large network (Theorem \ref{thm:rec-td} and Remark \ref{remark:td-pa}). Our analysis identifies the \emph{exploding semi-gradients} pathology in policy evaluation, which can significantly increase network and iteration complexities to mitigate perceptual aliasing under long-term dependencies (Remark \ref{remark:coh-td}), and demonstrates the role of regularization to mitigate this. We also provide empirical results in random-POMDP instances in Appendix \ref{sec:numerical-td}.

\paragraph{\textbf{Q$_2$.} How can we parameterize non-stationary policies by a rich and practically feasible class of RNNs and perform efficient policy optimization?}\mbox{}\\

$\bm{\triangleright}$ We represent non-stationary policies using IndRNNs with \textsc{SoftMax} parameterization as a form of finite-state controller, and perform computationally efficient NPG updates (based on path-based compatible function approximation for POMDPs) for policy optimization. The policy optimization update (called Rec-NPG) is aided by Rec-TD as the critic (Section \ref{sec:rec-nac}).

\paragraph{\textbf{Q$_3$.} What are the memory, computation and sample complexities of the resulting Rec-NAC method, which employs Rec-NPG for policy updates and Rec-TD for policy evaluation?}\mbox{}\\

$\bm{\triangleright}$ Our non-asymptotic analyses of Rec-TD (Theorem \ref{thm:rec-td}) and Rec-NPG (Theorem \ref{thm:npg}) demonstrate their near-optimality in the large-network limit while highlighting dependencies on memory, long-term POMDP dynamics, and RNN smoothness. Pathological cases with long-term dependencies may require exponentially growing resources (Remarks \ref{remark:coh-td}-\ref{remark:coh-npg}).

These results establish principled and scalable RL solutions for POMDPs, offering insights into the interplay between memory, smoothness, and optimization complexity.

\subsection{Previous work}
Natural policy gradient method, proposed by \cite{kakade2001natural}, has been extensively investigated for MDPs \citep{agarwal2020optimality, cen2020fast, khodadadian2021linear, liu2020improved, cayci2024convergence2}, and analyses of NPG with feedforward neural networks (FNNs) have been established by \cite{wang2019neural, liu2019neural, cayci2024finitetime}. As these works consider MDPs, the policies are stationary. In our case, the analysis of RNNs and POMDPs constitute a very significant challenge.

Standard TD learning, which does not have a memory structure, was shown to be suboptimal for POMDPs \citep{singh1994learning}. We incorporate RNNs into TD learning as a form of memory to address this problem in this work.

 In \citet{yu2012function, singh1994learning, uehara2022provably, kara2023convergence, cayci2023finitetime}, finite-memory policies based on sliding-window approximations of the history were investigated. Bilinear frameworks with memory-based policies \citep{uehara2022provably} and Hilbert space embeddings with deterministic latent dynamics \citep{uehara2023computationally} enable sample-efficient learning under specific model structures. In \cite{guo2022provably}, an offline RL algorithm for the specific class of linear POMDPs was proposed. Unlike these existing works, our approach integrates RNNs with NAC methods, providing a scalable and theoretically grounded framework for general POMDPs without requiring structural assumptions such as deterministic transitions, fixed memory windows, or linear POMDP dynamics. Value- and policy-based model-free RL algorithms based on RNNs have been widely considered in practice to solve POMDPs \citep{lin1993reinforcement, whitehead1995reinforcement, wierstra2010recurrent, mnih2014recurrent, ni2021recurrent, lu2024rethinking}. However, these works are predominantly experimental, thus there is no theoretical analysis of RNN-based RL methods for POMDPs to the best of our knowledge. In this work, we also present theoretical guarantees for RNN-based NPG for POMDPs. For structural results on the hardness of RL for POMDPs, we refer to \citep{liu2022partially, singh1994learning}.

 \subsection{Notation}
For a finite set $\bA$, $\Delta(\bA)=\{v\in\bR_{\geq 0}^{|\bA|}:\sum_{a\in\bA}v_a=1\}$ is the set of probability vectors over the set $\bA$. $\msf{Rad}(\alpha)=\msf{Unif}\{-\alpha,\alpha\}$ for $\alpha\in\bR_+$.

\section{Preliminaries on Partially Observable Markov Decision Processes}
In this paper, we consider a discrete-time infinite-horizon partially observable Markov decision process (POMDP) with the (nonlinear) dynamics
\begin{align*}
     \bP\LL(S_{t+1} =s|S_k,A_k,k\leq t\RR)&=:\cP((S_t,A_t),s),\\
     \bP(Y_t=y|S_t)&=:\phi(S_t,y),
 \end{align*}
for any $s\in\bS$ and $y\in\bY$, where $S_t$ is an $\bS$-valued \emph{state}, $Y_t$ is a $\bY$-valued \emph{observation}, and $A_t$ is an $\bA$-valued \emph{control} process with the stochastic kernels $\cP:\bS\times\bA\times\bS\rightarrow[0,1]$ and $\phi:\bS\times\bY\rightarrow[0,1]$. We consider finite but arbitrarily large $\bA,\bY$ and $\bS$, where $$\bA\subset \bR^{d_1},\bY\subset \bR^{d_2}$$ for some $d_1,d_2\in\bZ_+$ with $d:=d_1+d_2$, and $\|(y,a)\|_2\leq 1$ for any $(y,a)\in\bY\times\bA$. In this setting, the state process $(S_t)_{t\in\bN}$ is not observable by the controller. Let 
\begin{equation}
    Z_t = \begin{cases}
    Y_0, & \text{if } t=0, \\
    (Z_{t-1},A_{t-1},Y_t), & \mbox{if }t > 0,
\end{cases}
\end{equation}
be the history process, which is available to the controller at time $t\in\bN$, and 
\begin{equation}\bar{Z}_t:=(Z_t,A_t) = (Y_0,A_0,\ldots,Y_t,A_t),
\label{eqn:history}
\end{equation}
be the history-action process. 

\begin{definition}[Admissible policy]
    An admissible control policy $\pi = (\pi_t)_{t\in\bN}$ is a sequence of measurable mappings $\pi_t:(\bY\times\bA)^t\times\bY\rightarrow\Delta(\bA)$, and the control at time $t$ is chosen under $\pi_t$ randomly as $$\bP(A_t=a|Z_t=z_t)=\pi_t(a|z_t),$$ for any $z_t\in(\bY\times\bA)^t\times\bY$. We denote the class of all admissible policies by $\Pi_\msf{NM}$.
\end{definition} If an action $a$ is taken at state $s$, then a deterministic reward $r(s,a)$ with $|r(s,a)|\leq r_{\infty}<\infty$ is obtained.

\begin{definition}[Value function, $\cQ$-function, advantage function]
    Let $\pi$ be an admissible policy, and $\mu\in\Delta(\bY)$. The value function under $\pi$ with discount factor $\gamma\in(0,1)$ is defined as \begin{equation}
    \cV_t^{\pi}(z_t) := \bE^\pi\Big[\sum_{k=t}^\infty \gamma^{k-t} r(S_k,A_k)\Big|Z_t=z_t\Big],
\end{equation}
for any $z_t\in(\bY\times\bA)^t\times\bY$. Similarly, the state-action value function (also known as $\cQ$-function) and the advantage function under $\pi$ are defined as
\begin{align}
    \begin{aligned}
    \cQ_t^\pi(\bar{z}_t) &:= \bE^\pi\Big[\sum_{k=t}^\infty\gamma^{k-t}r(S_k,A_k)\Big|\bar{Z}_t=\bar{z}_t\Big],\\ \cA_t^\pi(z_t, a)&:=\cQ_t^\pi(z_t,a)-\cV_t^\pi(z_t),
    \end{aligned}
    \end{align}
for any $\bar{z}_t\in(\bY\times\bA)^{t+1}$, respectively.
\end{definition}

Given an initial observation distribution $\mu\in\Delta(\bY)$, the optimization problem is 
\begin{equation}
    \underset{\pi\in\Pi_\msf{NM}}{\mbox{max}}~\cV^\pi(\mu),
    \label{eqn:objective}
\end{equation}
where $$\cV^\pi(\mu):= \sum_{y\in\bY}\cV_0^\pi(y_0)\mu(y_0).$$ We denote an optimal policy as $\pi^\star\in\underset{\pi\in\Pi_\msf{NM}}{\arg\max}~\cV^\pi(\mu)$.

\begin{remark}[Curse of history in RL for POMDPs]Note that the problem in \eqref{eqn:objective} is significantly more challenging than its subcase of (fully-observable) MDPs since there may not exist an optimal stationary policy \citep{krishnamurthy2016partially, singh1994learning}. As such, the policy search is over \emph{non-stationary} randomized policies of type $\pi=(\pi_0,\pi_1,\ldots)$ where $\pi_t:(\bY\times\bA)^t\times\bY\rightarrow\Delta(\bA)$ depends on the history of observations $Z_t=(Y_0,A_0,Y_1,\ldots,A_{t-1},Y_t)$ for $t\in\bN$. In this case, direct extensions of the existing reinforcement learning methods for MDPs become intractable, even for finite $\bY,\bA$: the memory complexity of a non-stationary policy $\pi\in\Pi_\msf{NM}$ at epoch $t\in\bN$ is ${\cO}(|\bY\times\bA|^{t+1})$, growing exponentially.
\label{remark:coh}
\end{remark}

In the following section, we formally introduce the RNN architecture that we study in this paper.

\section{Independently Recurrent Neural Network Architecture}
We consider an independently recurrent neural network (IndRNN) architecture in this work \citep{LiEtAl2018, LiEtAl2019}. This architecture has been featured in POPGym \citep{morad2023popgym} as it enables RNNs with large sequence lengths by handling long dependencies in practical applications. In other works, it has been shown to be effective for POMDPs in practice as well \citep{lu2024rethinking, elelimy2024real}. 

 \revision{Let $X_t=(Y_t,A_t)\in\bR^d$, therefore $\Z_t=(X_0,X_1,\ldots,X_t)$ for any $t\in\bZ_+$ by \eqref{eqn:history}. The central structure in an IndRNN is the sequence of hidden states $\fH_t=(H_t^{(1)},H_2^{(2)},\ldots,H_t^{(m)})\in\bR^m$ for $t=0,1,\ldots$, which evolves according to 
\begin{equation}
{H}_t^{(i)}(\bar{Z}_t;\W,\U)= {\varrho}\Big(W_{ii}{H}_{t-1}^{(i)}(\bar{Z}_{t-1};\W,\U)+\langle U_i, X_t\rangle \Big)\mbox{ for all }i\in[m],
\label{eqn:hidden-state}
\end{equation}
with the initial condition $H_0^{(i)}(\Z_0;\W,\U):=\varrho(\langle U_i, X_0\rangle)$, where $\varrho:\bR\rightarrow\bR$ is a smooth activation function, $\W=\mathrm{diag}(W_{11},W_{22},\ldots,W_{mm})$ and $\U$ is an $m\times d$ matrix whose $i$-th row is $U_i^\top$ for $i\in[m]$.} We assume a smooth activation function $\varrho$ with $|\varrho(z)|\leq  \varrho_0,|\varrho'(z)|\leq \varrho_1\mbox{ and } |\varrho''(z)|\leq \varrho_2$ for all $z\in\bR$, which is satisfied by many widely-used activation functions including $\tanh$ and the sigmoid function.
We consider a linear readout layer with weights $c\in\bR^m$, which leads to the output
\begin{align}
    \begin{aligned}
    F_t(\bar{Z}_t;\W,\U, c) = \frac{1}{\sqrt{m}}\sum_{i=1}^mc_iH_t^{(i)}(\bar{Z}_t;\W,\U).
    \label{eqn:rnn}
    \end{aligned}
\end{align}

The operation of an independently recurrent neural network is illustrated in Figure \ref{fig:rnn}.
\begin{figure}
    \centering
    \includegraphics[width=0.4\textwidth]{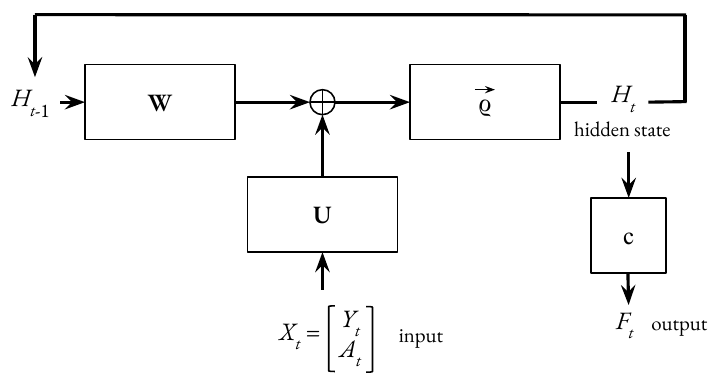}
    \caption{An independently recurrent neural network (IndRNN) in the RL context.}
    \label{fig:rnn}
\end{figure}
Following the neural tangent kernel literature, we omit the task of training the linear output layer $\fc\in\bR^m$ for simplicity, and study the training dynamics of $(\W,\U)$, which is the main challenge \citep{du2018gradient, oymak2020toward, cai2019neural, wang2019neural}. \revision{Consequently, we denote the learnable parameters of an IndRNN compactly in the vector form as
\begin{align}\label{eqn:vectorization}
    \Theta = \begin{pmatrix}
        \Theta_1\\\Theta_2\\\vdots\\\Theta_m
    \end{pmatrix}\in\bR^{m(d+1)}\mbox{ where }\Theta_i = \begin{pmatrix}
        W_{ii}\\U_i
    \end{pmatrix}\in\bR^{d+1}\mbox{ for }i\in[m].
\end{align}}
\noindent We use $\Theta$ and $(\W,\U)$ interchangeably throughout the paper.

A key feature of the neural tangent kernel analysis is the random initialization \citep{bai2019beyond, chizat2019lazy, cayci2021sample}. 
\revision{\begin{definition}[Symmetric random initialization]
Let $(c^0,\Theta^0)=(c_i^0,\Theta_i^0)_{i\in[m]}$ be a random vector such that
\begin{align*}
 c_i^0&\overset{\mathrm{iid}}{\sim}\mathsf{Rad}(1),\\
\Theta_i^0 &:= \begin{pmatrix}W_{ii}^0\\U_i^0\end{pmatrix}\overset{\mathrm{iid}}{\sim}\begin{pmatrix}\mathsf{Rad}(\alpha)\\\cN(0,I_d)
\end{pmatrix},\\
c_{i+m/2}^0&=-c_i^0\mbox{ and } \Theta_{i+m/2}^0=\Theta_i^0
\end{align*}
for $i=1,2,\ldots,\frac{m}{2}$. We call $(c^0,\Theta^0)$ a symmetric random initialization, and denote the distribution of $(c^0,\Theta^0)$ as $\zeta_0$.
    \label{def:sym-initialization}
\end{definition}
}
For both policy optimization (Algorithm \ref{alg:npg-rnn}) and policy evaluation (Algorithm \ref{alg:rec-td}), the IndRNNs are randomly initialized according to Definition \ref{def:sym-initialization}. Such random initialization schemes are widely adopted in practice, and play a fundamental role in the theoretical analysis of deep learning algorithms \cite{bai2019beyond, chizat2019lazy, wang2019neural, cai2019neural, liu2019neural}.

\revision{In the following subsection, we define the reference function class determined by overparameterized IndRNNs in a detailed way, which will be instrumental in the theoretical results and their analyses. We note that this subsection can be skipped for those who would like to focus on the algorithmic design.}

\subsection{Reference Function Class for Independently Recurrent Neural Networks}\label{sec:ntk}
\revision{
    A fundamental question in reinforcement learning with function approximation is to determine a concrete reference function class for the function approximation architecture that is used for approximation in the value and policy spaces \citep{bertsekas1996neuro}. In this subsection, we will identify and discuss the reference function class defined by the IndRNN architecture that will be used for incorporating memory to solve POMDPs. In order to motivate the discussion, we first overview basic reference function classes for (fully-observable) MDPs, then extend the discussion to POMDPs.
    
    \textbf{Function approximation in MDPs.} Let us consider value-based reinforcement learning in the case of MDPs, where the objective is to learn the Q-function under a given stationary policy $\pi$. The approximation error for a given reference class $\cF$ of functions $f:\bS\times\bA\rightarrow\bR$ is \begin{equation}\epsilon_\mathrm{app}(\cF):= \inf_{f\in\cF}\bE_{s,a}[(\cQ^\pi(s,a)-f(s,a))^2].
    \label{eqn:app-error}
    \end{equation} For example, if a linear function approximation scheme with a given feature map $\phi:\bS\times\bA\rightarrow\bR^p$ is used, then the reference function class is $\cF:=\{(s,a)\mapsto\theta^\top\phi(s,a):\theta\in\bR^p\}=\msf{span}(\Phi)$ where $\Phi:=[\phi^\top(s,a)]_{s,a}$ is the feature matrix. In the case of linear MDPs \cite{jin2020provably}, we have $\cQ^\pi\in\cF$ and $\epsilon_\mathrm{app}(\cF)=0$; otherwise TD(0) with this linear approximation scheme has an inevitable approximation error $\frac{1}{1-\gamma}\epsilon_\mathrm{app}(\cF)$ \citep{bertsekas1996neuro}. The reference function class for a randomly-initialized single hidden-layer feedforward neural network with frozen output layer is
    \begin{equation}
    \cF_\mathrm{NTK}:=\{(s,a)\mapsto \bE_{u_0\sim\cN(0,I_d)}[\bm{v}(u_0)^\top\nabla_u\varrho(\langle(s,a),u_0\rangle)]\mbox{ such that }\bE_{u_0\sim\cN(0,I_d)}[\|\bm{v}(u_0)\|_2^2]<\infty\},
    \label{eqn:ref-class-ffn}
    \end{equation}
    where $\bm{v}:\bR^d\rightarrow\bR^d$ \citep{liu2019neural, wang2019neural, cayci2021sample}. Technically, the completion of $\cF_\mathrm{NTK}$ yields the reproducing kernel Hilbert space (RKHS) of the so-called neural tangent kernel $$\kappa\left(x,x'\right):=\bE_{u_0}[\nabla_u^\top\varrho(u_0^\top x)\nabla_u\varrho(u_0^\top x')] = x^\top x'\bE[\varrho'(u_0^\top x)\varrho'(u_0^\top x')]\mbox{ for any }x,x'\in\bS\times\bA$$ and its explicit analysis shows that it is provably rich \citep{ji2019neural}. For a detailed discussion on the function space $\cF_\mathrm{NTK}$ and its role in reinforcement learning, we refer to Section A.2 in \cite{liu2019neural} and \cite{cayci2024finitetime}. Due to the concrete approximation bounds for $\cF_\mathrm{NTK}$, the representational assumption $\cQ^\pi \in \cF_\mathrm{NTK}$ is standard in the theoretical analyses of neural TD learning for MDPs, and the objective is to prove that neural TD learning can learn any $\cQ^\pi\in\cF_\mathrm{NTK}$ using samples with finite-time and finite-sample guarantees \citep{cai2019neural, wang2019neural, cai2019neural, cayci2021sample}. Without the representational assumption $\cQ^\pi\in\cF_\mathrm{NTK}$, the optimality guarantees in \cite{cai2019neural,liu2019neural, wang2019neural, cayci2021sample} hold up to an additional error term proportional to $\frac{1}{1-\gamma}\epsilon_{\mathrm{app}}(\cF_\mathrm{NTK})$.

}

\textbf{Function approximation in RL for POMDPs.} Analogous to the approximation error analysis in RL for MDPs discussed earlier, our objective here is to identify a suitable reference function class for the IndRNN architecture defined in \eqref{eqn:rnn}. Building on the framework of \citet{cayci2024convergence}, we present an infinite-width characterization of IndRNNs in the neural tangent kernel (NTK) regime. This directly extends the reference function class $\cF_\mathrm{NTK}$ in \ref{eqn:ref-class-ffn} for feedforward neural networks in the neural RL literature \citep{cai2019neural, wang2019neural, cayci2024finitetime} to the partially observable setting with recurrent models. We note that our reference function class reduces to the feedforward neural networks as a specific case (see Remark \ref{remark:fnn}).

\revision{For any $t\in\bZ_+$ and input $\Z$, symmetric initialization ensures that $F_t(\Z;\Theta^0)=0$. Furthermore, the first-order Taylor expansion of $F_t$ at $\Theta\in\bR^{m(d+1)}$ around $\Theta^0$ yields
\begin{equation}
    F_t(\Z;\Theta) = \nabla_\Theta^\top F_t(\Z;\Theta^0)\big(\Theta-\Theta^0\big) + \cO\LL(\frac{\|\Theta-\Theta^0\|^2}{\sqrt{m}}\RR).
\end{equation}
As $m\rightarrow \infty$, the linear part $\nabla_\Theta^\top F_t(\Z;\Theta^0)\big(\Theta-\Theta^0\big)$ is able to approximate a rich class of functions determined by the reproducing kernel Hilbert space (RKHS) of the recurrent neural tangent kernel defined as $$\kappa_t(\Z,\Z'):=\lim_{m\rightarrow\infty}\nabla_\Theta^\top F_t(\Z;\Theta^0) \nabla_\Theta F_t(\Z;\Theta^0),$$ for $t\in\bZ_+$. In the following, we characterize this sequence of reproducing kernel Hilbert spaces for $t \in\bZ_+$ explicitly, following \cite{cayci2024convergence}.}

Let $w_0\sim\msf{Rad}(\alpha)$ and $u_0\sim\cN(0,I_d)$ be independent random variables, and $\theta_0:=(w_0,u_0)$. Given a sequence $\bar{\bm{z}}=(x_0,x_1,\ldots)\in(\bY\times\bA)^{\bZ_+}$, let
    $$h_t(\z_t;\theta_0):=
        \varrho(w_0h_{t-1}(\z_{t-1};\theta_0)+\langle u_0,x_t\rangle)~\mbox{for }t = 0,1,2,\ldots,$$
with the initial condition $h_{-1}:=0$.
and $$\cI_t(\z_t;\theta_0) := \varrho'(w_0h_{t-1}(\z_{t-1};\theta_0)+\langle u_0,x_t\rangle).$$ Then, the neural tangent random feature mapping\footnote{The feature uses a complicated weighted-sum of all past inputs $x_k,k\leq t$, leading to a discounted memory to tackle non-stationarity. $x_{t-k}$ is scaled with $w_0^k\sim\msf{Rad}(\alpha)$, thus it yields a fading memory approximation of the history if $\alpha<1$.} at time $t$ is defined as
\begin{equation*}
    \psi_t(\z_t;\theta_0) := \sum_{k=0}^{t}w_0^{k}\begin{pmatrix}
            h_{t-k-1}(\z_{t-k-1};\theta_0)\\x_{t-k}
        \end{pmatrix}\prod_{j=0}^{k}\cI_{t-j}(\z_{t-j};\theta_0),
\end{equation*}
The random features induced by the recurrent neural architecture include an exponentially-moving sum of past observation-action pairs $x_t$, akin to \cite{eberhard2025partially}. However, in our case of RNNs, the decay rate of the exponentially-moving sum of $x_t$ is time- and data-dependent, which is an important property of recurrent neural architectures.

Based on the sequence of neural tangent random features, the neural tangent random feature matrix is defined as $\Psi(\bm{\z};\theta_0)=\Psi_\infty(\bm{\z};\theta_0)$, where
\begin{equation}
    \Psi_T(\bm{\z};\theta_0) := \begin{pmatrix}
        \psi_0^\top(\z_0;\theta_0)\\
        \psi_1^\top(\z_1;\theta_0)\\
        \vdots\\
        \psi_{T-1}^\top(\z_{T-1};\theta_0)
    \end{pmatrix},
    \label{eqn:ntrf-matrix}
\end{equation}
for any $T \in \bZ_+$.
\begin{definition}[Transportation mapping]
    Let $\cH$ be the set of mappings $\bv:\bR^{1+d}\rightarrow\bR^{1+d}$ such that $\bm{v}(\theta_0):=\begin{pmatrix}
        
    v_w(\theta_0)\\v_u(\theta_0)\end{pmatrix}$ for $\theta_0=(w_0,u_0)$ with 
$\bE[\|\bm{v}(\theta_0)\|_2^2] < \infty,$
where $w_0\sim \msf{Rad}(\alpha)$ and $u_0\sim \cN(0,I_d)$. We call $\bv\in\cH$ a transportation mapping, following \cite{ji2019polylogarithmic, ji2019neural}. 
\end{definition}
\begin{definition}[Reference function class for IndRNNs]
    We define the reference function class of IndRNNs for any sequence-length $T \geq 1$ as
    \begin{equation*}
        \cF_T:=\LL\{\bm{\z}\mapsto \bE\LL[\Psi_T(\bm{\z};\theta_0)\bv(\theta_0)\RR]=\begin{pmatrix}
            f_0^\star(\z_0;\bv)\\ \vdots \\ f_{T-1}^\star(\z_{T-1};\bv)
        \end{pmatrix}:\bm{v}\in\cH, \bm{\z}\in (\bY\times\bA)^{\bZ_+}\RR\},
    \end{equation*}
\end{definition}
where $f_t^\star(\z_t;\bv):=\bE[\psi_t^\top(\z_t;\theta_0)\bv(\theta_0)]$ for any $\bm{\z}\in(\bY\times\bA)^{\bZ_+}$. The same transportation mapping $\bv$ is used to define $f_t^\star$ for all $t\in\bN$, which is a characteristic feature of weight-sharing in RNNs. We denote $\cF := \cF_\infty$.

{\color{black}
     \begin{remark}[Reduction to $\cF_\mathrm{NTK}$] Note that setting $T = 1$ yields the random feature map $$\psi_t(\z_0;\theta_0) = \begin{pmatrix}
         0\\\nabla_u\varrho(\langle u_0,x_0\rangle)
     \end{pmatrix},$$ since $\nabla_u\varrho(\langle x_0, u_0\rangle) = x_0\varrho'(\langle x_0, u_0\rangle )$. Hence, for any $\bm{v}\in\cH$, we have $$\cF_1=\{x_0\mapsto \bE[\bm{v}_u(u_0)^\top \nabla_u\varrho(\langle x_0, u_0\rangle)]:\bE\|\bm{v}_u(u_0)\|_2^2 < \infty\},$$ which is exactly the reference function class $\cF_\mathrm{NTK}$ for feedforward neural networks given in \eqref{eqn:ref-class-ffn}. In other words, $\{\cF_T:T\in\bZ_+\}$ contains $\cF_\mathrm{NTK}$ with $\cF_1 = \cF_\mathrm{NTK}$, which is the reference function class in neural RL literature for MDPs \citep{wang2019neural, liu2019neural}. $\cF_1$ is dense in the space of continuous functions on a compact set \citep{ji2019neural}. 
\label{remark:fnn}
\end{remark}
}

\revision{
\begin{remark}[Fully-connected RNNs]
    IndRNNs utilize a diagonal hidden-to-hidden weight matrix $\mathbf{W}$, which was shown to be very effective in handling long-term dependencies in RL compared to conventional RNNs, GRU and LSTM architectures \citep{morad2023popgym}. In addition to its practical benefits, IndRNNs have theoretical niceties as well, as they enable (i) explicit characterization of the reference function class, and (ii) direct control and analysis of the spectral radius of W. Both of these theoretical amenities are lost when $\mathbf{W}$ does not inherit a diagonal structure.

\end{remark}
\subsection{Max-Norm Projection for IndRNNs}
Given an initialization $(\W(0),\U(0),c)$ as in Definition \ref{def:sym-initialization} and a vector $\rho=(\rho_\msf{w},\rho_\msf{u})^\top\in\bR_\gz^2$ of projection radii, we define the compactly-supported set of weights $\Omega_{\rho,m}\subset\bR^{m(d+1)}$ as
\begin{equation}
    \Omega_{\rho,m} = \Big\{\Theta\in\bR^{m(d+1)}:\max_i|W_{ii}-W_{ii}(0)|\leq \frac{\rho_\msf{w}}{\sqrt{m}},~\max_i\|U_i-U_i(0)\|\leq \frac{\rho_\msf{u}}{\sqrt{m}}\Big\}.
\end{equation}
Given any symmetric random initialization $(\W(0),\U(0),c)$ and $\rho\in\bR_\gz^2$, the set $\Omega_{\rho,m}$ is a compact and convex subset of $\bR^{m(d+1)}$, and for any $\Theta\in\Omega_{\rho,m}$, we have
\begin{align*}
    \max_{1\leq i\leq m}|\Wii-\Wii(0)|&\leq \frac{{\rho_\msf{w}}}{\sqrt{m}},\\
    \max_{1\leq i\leq m}\|U_{i}-U_i(0)\|&\leq \frac{\rho_\msf{u}}{\sqrt{m}}.
\end{align*}
Let 
\begin{equation}
    \mathbf{Proj}_{\Omega_{\rho,m}}[\Theta]=\LL[\underset{w\in \mathcal{B}_2\LL(\Wii(0),\frac{\rho_\msf{w}}{\sqrt{m}}\RR)}{\arg\min}~|\Wii-w_i|,\underset{u_i\in \mathcal{B}_2\LL(U_i(0),\frac{\rho_\msf{u}}{\sqrt{m}}\RR)}{\arg\min}~\|\U_i-u_i\|_2\RR]_{i\in[m]}
\end{equation}
As such, the projection operator $\mathbf{Proj}_{\Omega_{\rho,m}}[\cdot]$ onto $\Omega_{\rho,m}$ is called the max-norm projection (or regularization) \citep{goodfellow2013maxout, srebro2004maximum}. As an immediate consequence, $\Theta\in\Omega_{\rho,m}$ implies that $|W_{ii}| \leq |W_{ii}-W_{ii}(0)|+|W_{ii}(0)|\leq \alpha + \frac{\rho_{\msf{w}}}{\sqrt{m}}=:\alpha_m,$ which implies a strict control over $\max_{i\in[m]}|W_{ii}|$. As we will see in Section \ref{sec:rec-td} and Section \ref{sec:rec-npg}, such a strict control over the norm of the hidden-to-hidden weights $W_{ii}$ has a significant importance in stabilizing the training of IndRNNs. Similar projection mechanisms for IndRNNs are adopted in practice as well \citep{morad2023popgym}. For further details, we refer to Appendix \ref{sec:algorithmic-details}.
}
\revision{
\section{Rec-NAC: A High-Level Algorithmic View}\label{sec:rec-nac}
}
In this section, we present a high-level description of our Recurrent Natural Actor-Critic (Rec-NAC) Algorithm with two inner loops, critic (called Rec-TD) and actor (called Rec-NPG), for policy optimization with RNNs. The details of the inner loops of the algorithm will be given in the succeeding sections. We use an admissible policy $\pi=(\pi_t)_{t\in\bN}$ that is parameterized by a recurrent neural network $(F_t(\cdot;\Phi))_{t\in\bN}$ of the form given in \eqref{eqn:rnn} with a network width $m\in\bZ_+$. To that end, for any $t\in\bN$, let
\begin{equation}
    \pi_t^\Phi(a|z_t) := \frac{\exp\LL(F_t((z_t,a);\Phi)\RR)}{\sum_{a'\in\bA}\exp\LL(F_t((z_t,a');\Phi)\RR)},
    \label{eqn:softmax-policy}
\end{equation}
for any $z_t\in(\bY\times\bA)^t\times\bY$ and $a\in\bA$ with the parameter $\Phi\in\bR^{m(d+1)}$. The high-level operation of Rec-NAC is summarized in Algorithm \ref{alg:npg-rnn}.

\begin{algorithm}[h]
\caption{Recurrent Natural Actor-Critic (Rec-NAC) – a High-level description}
\label{alg:npg-rnn}
\begin{algorithmic}[1]
    \State Initialize the actor RNN as $(c,\Phi(0))\sim\zeta_0$ (see Definition \ref{def:sym-initialization}).
    
    \For{$n = 0, 1, 2, \dots, N-1$}
        \State \textbf{Critic. }Independently initialize the weights of the critic IndRNN as $(c^{n},\Theta^{n}(0))\overset{\mathrm{iid}}\sim\zeta_0$.
        \State \hspace{3.5em} Run \revision{Rec-TD} in Algorithm \ref{alg:rec-td} for $K_\msf{td}$ iterations, and obtain $\bar{\Theta}^n := K^{-1}_\msf{td}\sum_{k<K_\msf{td}}\Theta^n(k)$
        \State \hspace{3.5em} Estimate $\cQ_t^{\pi^{\Phi(n)}}$ by $\hat{\cQ}_t^{(n)}(\cdot) :=F_t(\cdot;\bar{\Theta}^n)$  for all $t<T$.

        \State \textbf{Actor.} Apply projected-SGD to obtain
        \begin{align*}
        \begin{aligned}
            \omega_n\in\underset{\omega\in\Omega_{\rho,m}}{\mbox{argmin}}~\bE_\mu^\pi\LL[\sum_{t=0}^{T-1}\gamma^t \LL(\nabla\ln\pi_t^{n}(A_t|Z_t) \omega-\hat{\cA}_t^{(n)}(\Z_t)\RR)^2\RR],
        \end{aligned}
        \end{align*}
        \State \hspace{3.5em} \revision{where the estimated advantage function is}
        \begin{align*}
            \hat{\cA}_t^{(n)}(z_t, a) := \hat{\cQ}_t^{(n)}(z_t, a) - \hat{\cV}_t^{(n)}(\Z_t),
        \end{align*}
        \State \hspace{3.5em} for 
            \revision{$\hat{\cQ}_t^{(n)}(\cdot) := F_t(\cdot; \bar{\Theta}^n)$ and $
            \hat{\cV}_t^{(n)}(\cdot) := \sum_{a'\in\bA}\pi_t^{\Phi(n)}(a'|z_t)\hat{\cQ}_t^{(n)}(\cdot,a').$}

        \State \textbf{Policy update.}
            $$\Phi(n+1) = \Phi(n) + \eta \cdot \omega_n.$$
    \EndFor

\end{algorithmic}
\end{algorithm}

For information regarding the algorithmic tools, i.e., random initialization and max-norm regularization for RNNs, we refer to Section \ref{sec:algorithmic-details}. 

\revision{In the following two sections, we derive the critic (Section \ref{sec:rec-td}) and the actor (Section \ref{sec:rec-npg}) in full detail, and provide concrete performance bounds for these methods in each section.}

\section{Critic: Recurrent Temporal Difference Learning (\textrm{Rec-TD})} \label{sec:rec-td}
In this section, we study a policy evaluation method for POMDPs, which will serve as the critic.

\textbf{Policy evaluation problem.} Consider the policy evaluation problem for POMDPs under a given admissible policy $\pi\in\Pi_\msf{NM}$. Given an initial observation distribution $\mu\in\Delta(\bY)$, policy evaluation aims to solve
\begin{align}\label{eqn:policy-evaluation-loss}
    &\underset{\Theta\in\Omega_{\rho,m}}{\mbox{min}}~\cR_T^\pi(\Theta) := \bE_\mu^\pi\LL[\sum_{t=0}^{T-1}\gamma^t\Big(F_t(\bar{Z}_t;\Theta)-\cQ_t^\pi(\Z_t)\Big)^2\RR],
\end{align}
where $T\in\bN$ is the sequence length (i.e., the length of the truncated trajectory $\Z$), and $\{F_t:t\in\bN\}$ is an IndRNN given in \eqref{eqn:rnn} -- we drop the superscript $\msf{a}$ for simplicity throughout the discussion. The expectation in $\cR_T^\pi(\Theta)$ is with respect to the joint probability law $P_T^{\pi,\mu}$ of the stochastic process $\{(S_t,A_t,Y_t):t\in[0,T]\}$ where $Z_0\sim\mu$.

\subsection{Recurrent TD Learning Algorithm}  
In this section, we present a multi-step temporal difference learning algorithm for computing the sequence of state-action value functions $\{\cQ_t^\pi:t\in\bN\}$ {\color{black} for large POMDPs}.

We assume access to a sampling oracle capable of generating independent trajectories from a given initial state distribution \citep{bhandari2018finite, cai2019neural}.
\begin{assumption}[Sampling oracle]
    Given an initial state distribution $\mu$, we assume that the system can be independently started from $S_0\sim\mu$, i.e., independent trajectories $\{(S_t,Y_t,A_t):t\in[T]\}\sim P_T^{\pi,\mu}$ are obtained.
    \label{assumption:sampling-oracle}
\end{assumption}

Rec-TD is presented in Algorithm \ref{alg:rec-td}. {\color{black}We study the performance of Rec-TD numerically in Section \ref{sec:numerical-td} under long-term and short-term dependencies to validate our theoretical results in Section \ref{sec:rec-td-analysis}.}

\begin{algorithm}[h]
\caption{Recurrent TD Learning Algorithm}
\label{alg:rec-td}
\begin{algorithmic}[1]
    \State \textbf{Input:} step-size $\eta>0$, max-norm projection radius $\rho=(\rho_w, \rho_u)$, sequence-length $T$.

    \State Initialize \revision{$(c,\Theta(0))\sim\zeta_0$} according to Definition \ref{def:sym-initialization}.
    \For{$k = 0, 1, 2, \dots,K-1$}
        \State Sample an initial state $S_0^k\sim \mu$ independently.
        \State Observe $Y_0^k \sim \Phi(S_0^k,\cdot)$.
        \State Choose an action $A_0^k \sim \pi_0(\cdot|Z_0^k)$.
        \State Set $\check{\nabla}\cR_T^k := 0$.
        \For{$t=0,1,\ldots,T$}
            \State State transition $S_{t+1}^k \sim \cP((S_t^k,A_t^k),\cdot)$.
            \State Observe $Y_{t+1}^k \sim \Phi(S_{t+1}^k,\cdot)$.
            \State Choose an action $A_{t+1}^k \sim \pi_{t+1}(\cdot|Z_{t+1}^k)$.
            \State Compute temporal difference $\delta_t(\Z_{t}^k,\Theta(k))$ where
            \[
            \delta_t(\bar{z}_{t+1};\Theta) := r_t + \gamma F_{t+1}(\bar{z}_{t+1};\Theta) - F_t(\bar{z}_t;\Theta).
            \]
            \State Update stochastic semi-gradient:
            $$
            \check{\nabla}\cR_T^k \leftarrow \check{\nabla}\cR_T^k + \gamma^t\delta_t(\Z_{t+1}^k;\Theta(k)).
            $$
        \EndFor
        \State Parameter update \revision{with max-norm projection}
            \begin{align*}
            {\Theta}(k+1) = \mathbf{Proj}_{\Omega_{\rho,m}}\big[\Theta(k) + \eta \cdot \check{\nabla}\cR_T^k\big].
            \end{align*}
    \EndFor
\end{algorithmic}
\end{algorithm}
\begin{remark}[Intuition behind Rec-TD]

    In a stochastic optimization setting, the loss-minimization for $\cR_T(\Theta)$ would be solved by using gradient descent, where the gradient is $$\nabla_\Theta \cR_T^\pi(\Theta)=2\bE_\mu^\pi\LL[\sum_{t=0}^{T-1}\gamma^t\Big(F_t(\Z_t;\Theta)-\cQ_t^\pi(\Z_t)\Big)\nabla F_t(\Z_t;\Theta)\RR].$$ On the other hand, the target function $\cQ_t^\pi$ is unknown and to be learned. Following the bootstrapping idea for MDPs in \cite{sutton1988learning}, we exploit an extended \emph{non-stationary Bellman equation} in Proposition \ref{prop:bellman}, and use $r_t+\gamma F_{t+1}(\Z_{t+1};\Theta)$ as a bootstrap estimate for the unknown $\cQ_t^\pi(\Z_t)$. Note that, in the realizable case with $F_t(\cdot;\Theta^\star)=\cQ_t^\pi(\cdot),~t\in\bZ_+$ for some $\Theta^\star$, we have $\bE_\mu^\pi[\check{\nabla}\cR_T(\Z_T;\Theta^\star)] = 0$, motivating the use of the stochastic approximation in this partially observable setting.
\end{remark}

\subsection{Theoretical Analysis of Rec-TD: Finite-Time Bounds and Global Near-Optimality}\label{sec:rec-td-analysis}
In the following, we prove that Rec-TD with max-norm regularization achieves global optimality in expectation. To characterize the impact of long-term dependencies on the performance of Rec-TD, let $p_t(x) = \sum_{k=0}^{t-1}|x|^k,\mbox{~and~}q_t(x) = \sum_{k=0}^{t-1}(k+1)|x|^{k},~x\in\bR,t\in\bN.$

In the following, we present a regularity condition on the state-action value functions.
\begin{assumption}[Regularity of $(\cQ_t^\pi)_t$]\label{assumption:representation-td}
    $\{\cQ_t^\pi:t\in\bN\}\in\cF$ with a transportation mapping $\bv=(v_\msf{w},v_\msf{u})\in\cH$ such that $\sup_{u\in\bR^{d}}\|v_\msf{u}(u)\|_2\leq \nu_\msf{u}$ and $\sup_{w\in\bR}|v_\msf{w}(w)|\leq \nu_\msf{w}$.
\end{assumption}
{\color{black} Assumption \ref{assumption:representation-td} is a representational assumption, stating that $(\cQ^\pi_t)_t$ lies in the RKHS induced by the random features $\Psi_T(\z;\theta_0)$ defined in \eqref{eqn:ntrf-matrix}. It directly extends Assumption 4.1 in \cite{wang2019neural} and Assumption  2 in \cite{cayci2024finitetime} to POMDPs, and exactly recovers these assumptions when $T=1$ (see Remark \ref{remark:fnn}). 
}

\begin{theorem}[Finite-time bounds for Rec-TD]\label{thm:rec-td}
    Under Assumptions \ref{assumption:sampling-oracle}-\ref{assumption:representation-td}, for any projection radius $\rho \succeq \nu=(\nu_\msf{w},\nu_\msf{u})$ and step-size $\eta > 0$, Rec-TD with max-norm \revision{projection} achieves the following error bound:
    \begin{equation}\label{eqn:rec-td-regret}
            \bE\Big[\frac{1}{K}\sum_{k=0}^{K-1}\cR_T^\pi(\Theta(k))\Big]\leq \frac{1}{\sqrt{K}}\LL(\frac{\|\nu\|_2^2}{(1-\gamma)} + \frac{C_T\ui{1}}{(1-\gamma)^3}\RR)+\frac{C_T\ui{2}}{(1-\gamma)^2\sqrt{m}}+\underbrace{\frac{\gamma^T}{(1-\gamma)K}\sum_{k=0}^{K-1}\omega_{T,k}^2}_{(\heartsuit)}.
    \end{equation}
    for any $K\in\bN$, where $$C_T\ui{1}, C_T\ui{2}=\msf{poly}\LL(p_T((\alpha+\rho_w m^{-1/2})\varrho_1),\|\rho\|_2,\|\nu\|_2\RR),$$ are instance-dependent constants that do not depend on $K$, and $\omega_{t,k}:=\sqrt{\bE[(F_t(\Z_t;\Theta(k))-\cQ_t^\pi(\Z_t^k))^2]}$ is a uniformly bounded sequence for $t,k\in\bN$. 
    Furthermore, the loss at average-iterate, $\bE[\cR_T^\pi\LL(\frac{1}{K}\sum_{k=0}^{K-1}\Theta(k)\RR)]$, admits the same upper bound as the regret upper bound in \eqref{eqn:rec-td-regret}, up to a multiplicative factor of 10.
\end{theorem}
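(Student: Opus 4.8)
The plan is to run the mean-path / potential-function argument for projected stochastic TD learning (in the spirit of \citet{bhandari2018finite, cai2019neural, wang2019neural}), but carried out entirely in the \emph{recurrent} NTK regime, so that every perturbation estimate is propagated through the $T$ recurrent steps of the IndRNN. First I would fix the policy $\pi$ and pass to the local linearization $\hat F_t(\Z;\Theta):=\langle\nabla_\Theta F_t(\Z;\Theta^0),\Theta-\Theta^0\rangle$; symmetric initialization (Definition \ref{def:sym-initialization}) gives $F_t(\cdot;\Theta^0)=0$, so $F_t$ and $\hat F_t$ agree to first order. Using Assumption \ref{assumption:representation-td} I would then build a finite-width target $\Theta^\star\in\Omega_{\rho,m}$ — the empirical counterpart of the transportation map $\bv$ — for which $\hat F_t(\cdot;\Theta^\star)$ reproduces $\cQ_t^\pi$ up to an $\cO(1/\sqrt m)$ random-feature error for every $t<T$. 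The hypothesis $\rho\succeq\nu$ is precisely what places $\Theta^\star$ inside the projection set, and the induced initial distance satisfies $\|\Theta(0)-\Theta^\star\|_2^2\lesssim\|\nu\|_2^2$.

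The core of the argument is a one-step drift inequality for the potential $\Delta_k:=\|\Theta(k)-\Theta^\star\|_2^2$. Since $\Theta^\star\in\Omega_{\rho,m}$, non-expansiveness of $\mathbf{Proj}_{\Omega_{\rho,m}}$ gives
\begin{equation*}
\bE[\Delta_{k+1}\mid\Theta(k)]\leq \Delta_k+2\eta\,\langle\bar g_k,\Theta(k)-\Theta^\star\rangle+\eta^2\,\bE[\|\check\nabla\cR_T^k\|_2^2],
\end{equation*}
with $\bar g_k=\bE[\check\nabla\cR_T^k\mid\Theta(k)]$ the mean semi-gradient. The heart of the proof is to convert the descent afforded by $\bar g_k$ into the loss. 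Substituting $\nabla F_t\approx\psi_t$ turns $\langle\bar g_k,\Theta(k)-\Theta^\star\rangle$ into $\bE[\sum_t\gamma^t\bar\delta_t\,(\hat F_t(\Theta(k))-\cQ_t^\pi)]$ up to an NTK error; invoking the non-stationary Bellman equation (Proposition \ref{prop:bellman}) to write $\bE[\bar\delta_t\mid\bar Z_t]=-e_t+\gamma\,\bE[e_{t+1}\mid\bar Z_t]$ with $e_t:=\hat F_t-\cQ_t^\pi$, and telescoping the discounted cross terms via $\bE[e_te_{t+1}]\leq\tfrac12(\bE[e_t^2]+\bE[e_{t+1}^2])$, I expect to obtain
\begin{equation*}
\langle\bar g_k,\Theta(k)-\Theta^\star\rangle\leq -\tfrac{1-\gamma}{2}\,\cR_T^\pi(\Theta(k))+\tfrac{\gamma^T}{2}\,\omega_{T,k}^2+(\text{NTK error}).
\end{equation*}
Here the Bellman contraction supplies the $(1-\gamma)$ factor and the boundary term $\gamma^T\omega_{T,k}^2$ is exactly the uncontrolled horizon residual ($\heartsuit$) left by truncating the telescope at $t=T-1$.

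It remains to control the two error sources. The second-moment term $\bE[\|\check\nabla\cR_T^k\|_2^2]$ and the per-step NTK linearization error are both bounded using bounded rewards ($r_\infty$), the activation bounds ($\varrho_0,\varrho_1,\varrho_2$), and — crucially — the max-norm constraint $|W_{ii}|\leq\alpha_m=\alpha+\rho_w m^{-1/2}$. Since $\nabla_\Theta F_t$ is formed by backpropagation through $t$ recurrent steps, each contributing a factor of order $W_{ii}\varrho'$, the relevant Lipschitz and smoothness constants scale like $p_T(\alpha_m\varrho_1)$ and $q_T(\alpha_m\varrho_1)$; collecting these into $C_T\ui{1},C_T\ui{2}=\msf{poly}(p_T(\alpha_m\varrho_1),\|\rho\|_2,\|\nu\|_2)$ yields a variance bound $\lesssim C_T\ui{1}/(1-\gamma)^2$ and an approximation bound $C_T\ui{2}/\sqrt m$. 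Summing the one-step inequality over $k=0,\dots,K-1$, telescoping $\sum_k(\Delta_k-\Delta_{k+1})\leq\Delta_0\lesssim\|\nu\|_2^2$, dividing by $\eta(1-\gamma)K$, and choosing $\eta\asymp 1/\sqrt K$ to balance the $1/(\eta K)$ and $\eta$ contributions produces \eqref{eqn:rec-td-regret}. The average-iterate guarantee then follows by Jensen's inequality applied to the (quadratic, hence convex) linearized surrogate of $\cR_T^\pi$, with the factor of $10$ absorbing the gap between $\cR_T^\pi$ and its linearization.

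The main obstacle will be the recurrent NTK perturbation analysis underpinning the error bounds. Unlike the feedforward case (Remark \ref{remark:fnn}), the gradient $\nabla_\Theta F_t$ back-propagates through $t$ applications of the diagonal recurrence, so naive estimates blow up as $(\alpha_m\varrho_1)^t$ — the \emph{exploding semi-gradient} pathology. Keeping these bounds at the sharp polynomial order $p_T,q_T$ while ensuring the linearization errors never overwhelm the $(1-\gamma)\cR_T^\pi$ descent term is the delicate part, and it is exactly where projection onto $\Omega_{\rho,m}$ is indispensable.
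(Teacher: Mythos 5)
Your proposal is correct and follows essentially the same route as the paper's proof: the same potential $\|\Theta(k)-\bar\Theta\|_2^2$ with a target built from the transportation map, projection non-expansiveness, the non-stationary Bellman equation plus the AM--GM bound on the cross terms $\omega_{t,k}\omega_{t+1,k}$ to extract the $-(1-\gamma)/2$ descent and the $\gamma^T\omega_{T,k}^2$ truncation residual, NTK linearization/approximation errors controlled via the $p_T(\alpha_m\varrho_1)$-scaled Lipschitz and smoothness constants under the max-norm projection, and the $\eta\asymp 1/\sqrt K$ balancing. The average-iterate step likewise matches the paper's use of the quadratic linearized surrogate (Corollary 1 of the cited reference) with the constant absorbed into the factor of 10.
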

The proof of Theorem \ref{thm:rec-td} can be found in Section \ref{sec:analysis-rec-td}. 

\revision{Assumption \ref{assumption:sampling-oracle} is critical to obtain finite-time bounds in Theorem \ref{thm:rec-td}, and holds when the system can be restarted independently from the initial state distribution \cite{bhandari2018finite}. In the specific case of fully-observable MDPs, the process $\{(S_k,A_k):k\in\bN\}$ is a Markov chain under any stationary policy, and mixing time arguments under uniform ergodicity assumptions are used for analysis under Markovian sampling from a single trajectory without independent restarts \citep{bhandari2018finite, cayci2021sample}. On the other hand, in the case of POMDPs, $\{(S_k,A_k):k\in\bN\}$ is not a Markov chain under a general non-stationary policy $\pi$. In the specific case of policies parameterized by RNNs with hidden state $\{H_k:k\in\bN\}$, the augmented process $\{(S_k,A_k,Y_k,H_k):k\in\bN\}$ forms a Markov process. The challenge here is that the state space for this augmented Markov process may be very large or even continuous, and standard theoretical tools (e.g., mixing time arguments) can become much more involved. Under Assumption \ref{assumption:representation-td}, Theorem \ref{thm:rec-td} implies the global $\epsilon$-optimality of Rec-TD as the sequence-length $T\rightarrow\infty$ for sufficiently large number of iterations $K = \cO(C_T^{(1)}/\epsilon^2)$ and network width $m=\cO(C_T^{(2)}/\epsilon^2)$. If we omit Assumption \ref{assumption:representation-td}, the error bound in Theorem \ref{thm:rec-td} still holds with an additional error term $\cO\LL(\frac{1}{1-\gamma}\epsilon_\mathrm{app}(\cF_T)\RR)$ where $$\epsilon_\mathrm{app}(\cF_T):=\inf_{f\in\cF_T}\bE_\mu^\pi\LL[\sum_{t=0}^{T-1}\gamma^t \Big(f_t(\Z_t)-\cQ_t^\pi(\Z_t)\Big)^2\RR]$$ is the function approximation error.}
\begin{remark}[Overcoming perceptual aliasing with Rec-TD]
    Memoryless TD learning suffers from a non-vanishing optimality gap in POMDPs, known as perceptual aliasing \citep{singh1994learning}. To address this, Rec-TD integrates $T$-step stochastic approximation with an RNN, enabling it to retain memory. Accordingly, Theorem \ref{thm:rec-td} establishes that as $T\rightarrow \infty$, Rec-TD reduces $\cR_\infty^\pi$ to arbitrarily small values, given sufficiently large network width $m$ and iteration count $K$.
    \label{remark:td-pa}
    \end{remark}
    \begin{remark}[The impact of long-term dependencies]
    Note that both constants $C_T\ui{1}, C_T\ui{2}$ polynomially depend on $p_T\LL(\varrho_1\alpha_m\RR)$. As noted in \cite{goodfellow2016deep}, the spectral radius of $\{\W(k):k\in\bN\}$ determines the degree of long-term dependencies in the problem as it scales $H_t$. Consistent with this observation, our bounds depend on $$\alpha_m:=\alpha+\frac{\rho_\msf{w}}{\sqrt{m}} \geq \lambda_\msf{max}(\W^\top(k)\W(k)) = \max_{i\in[m]}|W_{ii}(k)|,$$ for any $k\in\bN$. \revision{Note that Theorem \ref{thm:rec-td} requires $\rho_\msf{w}\geq \nu_\msf{w}$, thus $\max_{i\in[m]}|W_{ii}(k)|$ should be sufficiently large depending on the RKHS norm $\nu$.} Let $\varepsilon > 0$ be any given target error.
    \begin{itemize}
        \item \textbf{Short-term memory.} If $\alpha_m < \frac{1}{\varrho_1}$, then it is easy to see that $p_T(\varrho_1\alpha_m)\leq \frac{1}{1-\varrho_1\alpha_m}$. Thus, the extra term $(\heartsuit)$ in \eqref{eqn:rec-td-regret} vanishes at a geometric rate as $T\rightarrow\infty$, yet $m$ (network-width) and $K$ (iteration-complexity) are still $\tilde{{\cO}}(1/\varepsilon^2)$. Rec-TD is very efficient in that case.
        \item \textbf{Long-term memory.} If $\alpha_m > \frac{1}{\varrho_1}$, as $T\rightarrow\infty$, both $m$ and $K$ grow at a rate $\cO\LL((\varrho_1\alpha_m)^T/\varepsilon^2\RR)$ while the extra term $(\heartsuit)$ in \eqref{eqn:rec-td-regret} vanishes at a geometric rate. As such, the required network size and iterations grow at a geometric rate with $T$ in systems with long-term memory, constituting the pathological case.
    \end{itemize}
    \revision{Theorem \ref{thm:rec-td} emphasizes the critical importance of max-norm projection and large neural network size $m$ in stabilizing the training of IndRNNs by Rec-TD, and guides the choice of the projection radius $\rho$. Interestingly, if $\{\cQ_t^\pi:t<T\}\in\cF_T$ has an RKHS norm $\nu_\msf{w} \leq 1/\varrho_1$, then Rec-TD with a projection radius $\rho_\msf{w}\gtrapprox \nu_\msf{w}$ and overparameterization $m\gg 1$ yields significantly improved policy evaluation performance in terms of $C_T^{(1)},C_T^{(2)}$ for large $T$. Similar projection mechanisms on $\{W_{ii}:i\in[m]\}$ are widely used for IndRNNs in practice, for instance in \cite{morad2023popgym}, to enhance stability.}
    \label{remark:coh-td}
    \end{remark}
    The performance of Rec-TD is studied numerically in Random-POMDP instances in Section \ref{sec:numerical-td}.

\section{Actor: Recurrent Natural Policy Gradient (Rec-NPG) for POMDPs}\label{sec:rec-npg}
The goal is to solve the following problem for a given initial distribution $\mu\in\Delta(\bY)$ and $\rho\in\bR_{>0}^2$:
\begin{equation}\tag{PO}
    \underset{\Theta\in\bR^{m(d+1)}}{\mbox{max}}~{\cV^{\pi^\Phi}(\mu)}\mbox{~such that~}\Phi\in\Omega_{\rho,m},
    \label{eqn:policy-optimization}
\end{equation}

\subsection{Recurrent Natural Policy Gradient for POMDPs}

In this section, we describe the recurrent natural policy gradient (Rec-NPG) algorithm for non-stationary reinforcement learning. First, we formally establish in Prop. \ref{prop:pgt-pomdp} that the policy gradient under partial observability takes the form
$$\nabla_\Phi\cV^{\pi^\Phi}(\mu) := \bE_\mu^{\pi^\Phi}\LL[\sum_{t=0}^\infty\gamma^t\cQ_t^{\pi^\Phi}(Z_t,A_t)\nabla_\Phi\ln\pi_t^\Phi(A_t|Z_t)\RR],$$ 
where the state $S_t$ in the MDP framework is replaced by the process history $Z_t$ in POMDP.
Fisher information matrix under a policy $\pi^\Phi$ is defined as
\begin{equation*}
    G_\mu(\Phi) := \bE_\mu^{\pi^\Phi}\LL[\sum_{t=0}^\infty\gamma^t\nabla\ln{\pi_t^\Phi(A_t|Z_t)}\nabla^\top\ln{\pi_t^\Phi(A_t|Z_t)}\RR],
\end{equation*}
for an initial observation distribution $\mu\in\Delta(\bY)$. Rec-NPG updates the policy parameters by
\begin{equation}
    \Phi(n+1) = \Phi(n) + \eta\cdot G_\mu^\dagger (\Phi(n))\nabla_\Phi\cV^{\pi^{\Phi(n)}}(\mu),
    \label{eqn:npg}
\end{equation}
for an initial parameter $\Phi(0)$ and step-size $\eta > 0$, where $G^\dagger$ denotes the Moore-Penrose inverse of a matrix $G$. This update rule is in the same spirit as the NPG introduced in \cite{kakade2001natural}, however, due to the non-stationary nature of the partially observable MDP, it has significant complications that we will address.

In order to avoid computationally-expensive policy updates in \eqref{eqn:npg}, we utilize the following extension of the compatible function approximation in \cite{kakade2001natural} to the case of non-stationary policies for POMDPs.
\begin{proposition}[Compatible function approximation for non-stationary policies]\label{prop:cfa}
    For any $\Phi\in\bR^{m(d+1)}$ and initial observation distribution $\mu$, let 
    \begin{equation}
        \mathcal{L}_\mu(w;\Phi) = \bE_\mu^{\pi^\Phi}\LL[\sum_{t=0}^\infty\gamma^t \big(\nabla^\top\ln\pi_t^\Phi(A_t|Z_t) \omega -\cA^{\pi^\Phi}_t(\Z_t)\big)^2\RR],
        \label{eqn:cfa}
    \end{equation}
    for $\omega\in\bR^{m(d+1)}$. Then, we have
    \begin{equation}
        G_\mu^\dagger(\Phi)\nabla_\Phi\cV^{\pi^\Phi}(\mu) \in \underset{\omega\in\bR^{m(d+1)}}{\arg\min}\mathcal{L}_\mu(\omega; \Phi).
        \label{eqn:npg-update}
    \end{equation}
\end{proposition}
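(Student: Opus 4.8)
The plan is to exploit that $\mathcal{L}_\mu(\cdot\,;\Phi)$ is a convex quadratic in $\omega$, so that its global minimizers are characterized exactly by the first-order stationarity condition $\nabla_\omega\mathcal{L}_\mu(\omega;\Phi)=0$. First I would differentiate under the expectation and the discounted sum to obtain
\begin{equation*}
    \frac{1}{2}\nabla_\omega\mathcal{L}_\mu(\omega;\Phi) = \bE_\mu^{\pi^\Phi}\LL[\sum_{t=0}^\infty\gamma^t \nabla\ln\pi_t^\Phi(A_t|Z_t)\,\nabla^\top\ln\pi_t^\Phi(A_t|Z_t)\RR]\omega - \bE_\mu^{\pi^\Phi}\LL[\sum_{t=0}^\infty\gamma^t \cA_t^{\pi^\Phi}(\Z_t)\,\nabla\ln\pi_t^\Phi(A_t|Z_t)\RR],
\end{equation*}
where the first bracket is exactly $G_\mu(\Phi)\succeq 0$. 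Denoting the second (advantage-weighted score) term by $b$, the stationarity condition is the normal equation $G_\mu(\Phi)\,\omega = b$, and since $\mathcal{L}_\mu$ is convex its minimizer set is precisely the affine solution set of this equation.

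The second step is to identify $b$ with the policy gradient. By Proposition \ref{prop:pgt-pomdp}, $\nabla_\Phi\cV^{\pi^\Phi}(\mu) = \bE_\mu^{\pi^\Phi}[\sum_t \gamma^t \cQ_t^{\pi^\Phi}(\Z_t)\nabla\ln\pi_t^\Phi(A_t|Z_t)]$, so substituting $\cA_t^{\pi^\Phi}=\cQ_t^{\pi^\Phi}-\cV_t^{\pi^\Phi}$ gives $b - \nabla_\Phi\cV^{\pi^\Phi}(\mu) = -\bE_\mu^{\pi^\Phi}[\sum_t \gamma^t \cV_t^{\pi^\Phi}(Z_t)\nabla\ln\pi_t^\Phi(A_t|Z_t)]$. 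Conditioning on $Z_t$ and using that $\cV_t^{\pi^\Phi}(Z_t)$ is $Z_t$-measurable together with the baseline identity $\bE[\nabla\ln\pi_t^\Phi(A_t|Z_t)\mid Z_t] = \sum_{a\in\bA}\nabla\pi_t^\Phi(a|Z_t) = \nabla\sum_{a\in\bA}\pi_t^\Phi(a|Z_t) = 0$, each summand vanishes, so $b = \nabla_\Phi\cV^{\pi^\Phi}(\mu)$ and the normal equation becomes $G_\mu(\Phi)\,\omega = \nabla_\Phi\cV^{\pi^\Phi}(\mu)$.

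It remains to verify that $\omega^\star := G_\mu^\dagger(\Phi)\nabla_\Phi\cV^{\pi^\Phi}(\mu)$ solves this equation, which is the delicate part since $G_\mu(\Phi)$ is generically singular in the overparameterized regime $m(d+1)\gg 1$. The Moore-Penrose inverse yields an exact solution precisely when $\nabla_\Phi\cV^{\pi^\Phi}(\mu)\in\mathrm{range}(G_\mu(\Phi))$, so I would establish this range condition directly. Since $G_\mu(\Phi)$ is symmetric positive semidefinite, $\mathrm{range}(G_\mu(\Phi))=\ker(G_\mu(\Phi))^\perp$; for any $\omega\in\ker(G_\mu(\Phi))$ we have $\omega^\top G_\mu(\Phi)\omega = \bE_\mu^{\pi^\Phi}[\sum_t\gamma^t(\nabla^\top\ln\pi_t^\Phi(A_t|Z_t)\,\omega)^2]=0$, forcing $\nabla^\top\ln\pi_t^\Phi(A_t|Z_t)\,\omega=0$ almost surely, whence $\langle\omega,\nabla_\Phi\cV^{\pi^\Phi}(\mu)\rangle = \bE_\mu^{\pi^\Phi}[\sum_t\gamma^t\cA_t^{\pi^\Phi}(\Z_t)\,(\nabla^\top\ln\pi_t^\Phi(A_t|Z_t)\,\omega)]=0$. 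Thus $\nabla_\Phi\cV^{\pi^\Phi}(\mu)\perp\ker(G_\mu(\Phi))$, so it lies in the range, $G_\mu(\Phi)\,\omega^\star = \nabla_\Phi\cV^{\pi^\Phi}(\mu)$, and $\omega^\star$ is a minimizer (the full minimizer set being $\omega^\star+\ker(G_\mu(\Phi))$).

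The main obstacle I anticipate is exactly this treatment of the singular Fisher matrix through the pseudoinverse: one cannot simply invert $G_\mu(\Phi)$, and the kernel/range orthogonality argument is what makes the compatible-approximation identity go through in the overparameterized setting. A secondary technical point is justifying the interchange of $\nabla_\omega$, the expectation $\bE_\mu^{\pi^\Phi}$, and the infinite discounted sum over $t$; this is routine given $\gamma\in(0,1)$, bounded rewards $|r|\leq r_\infty$, and the uniform boundedness of the softmax scores $\nabla\ln\pi_t^\Phi$ (which follows from the smoothness bounds on $\varrho$, the normalization $\|(y,a)\|_2\leq 1$, and compactness of $\Omega_{\rho,m}$), but I would state it explicitly to make the differentiation under the infinite sum rigorous in the non-stationary infinite-horizon regime.
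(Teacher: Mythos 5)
Your proof is correct and follows essentially the same route as the paper, which simply writes the first-order optimality condition $G_\mu(\Phi)\omega^\star=\nabla_\Phi\cV^{\pi^\Phi}(\mu)$ (citing Kakade's compatible function approximation) and stops. The one substantive addition in your write-up is the verification that $\nabla_\Phi\cV^{\pi^\Phi}(\mu)\in\mathrm{range}(G_\mu(\Phi))$ via the kernel-orthogonality argument, so that the Moore--Penrose solution actually satisfies the normal equation when $G_\mu(\Phi)$ is singular; the paper leaves this step implicit, and your argument for it is correct and worth making explicit in the overparameterized regime.
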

We have the following remark regarding the intricacies of compatible function approximation in the POMDP setting.
\begin{remark}[Path-based compatible function approximation with truncation] For MDPs, the compatible function approximation error $\mathcal{L}_\mu(w;\Phi)$ can be expressed by using the discounted state-action occupancy measure, from which one can obtain unbiased samples \citep{agarwal2020optimality, konda2003onactor}. Thus, the infinite-horizon can be handled without any loss. On the other hand, for POMDPs as in \eqref{eqn:cfa}, this simplification is impossible due to the non-stationarity. As such, we use a path-based method for a sequence-length $T\in\bN$ with
\begin{equation*}
    \ell_T(\omega;\Phi,\cQ):=\sum_{t=0}^{T-1}\gamma^t(\nabla\ln\pi_t^\Phi(A_t|Z_t)\omega-\cA_t(Z_t,A_t))^2,
\end{equation*}
where $\cA_t(z_t,a_t) = \cQ_t(z_t,a_t)-\sum_{a\in\bA}\pi_t^\Phi(a|z_t)\cQ_t(z_t,a)$ is the advantage function.
\end{remark}
Given a policy with parameter $\Phi(n)$, the corresponding output of the critic, which is obtained by Rec-TD with the average-iterate as $$\hat{\cQ}\ui{n}(\cdot):=F_t(\cdot;\bar{\Theta}^n)\mbox{ for }\bar{\Theta}^n:=\frac{1}{K_\msf{td}}\sum_{k<K_\msf{td}}{\Theta}^n(k),$$ the actor aims to solve the following problem:
\begin{align*}
    &\underset{\omega\in\Omega_{\rho,m}}{\mbox{min}}~\bE\Big[\ell_T\LL(\omega;\Phi(n),\hat{\cQ}^{(n)}\RR)\Big|\bar{\Theta}^n,\Phi(n),\ldots,\Phi(0)\Big].
\end{align*}
We utilize stochastic gradient descent (SGD) to solve the above problem. Let $\Z_T^{n,k}\sim P_T^{\pi^{\Phi(n)},\mu}$ be an independent random sequence for $k\in\bN$, $\hat{\omega}_n(0)=0$, and
\begin{align*}
    \tilde{\omega}_n(k+1) &= \hat{\omega}_n(k)-\eta_\msf{sgd}\nabla_\omega\ell_T\big(\hat{\omega}_n(k);\Phi(n),\hat{\cQ}\ui{n}\big),\\
    \hat{\omega}_n(k+1) &= \mathbf{Proj}_{\Omega_{\rho,m}}[\tilde{\omega}_n(k+1)],
\end{align*}
A stochastic estimate of $G_\mu^\dagger(\Phi(n))\nabla_\Phi\cV^{\pi^{\Phi(n)}}(\mu)$ is computed as $\omega_n:=\frac{1}{K_\msf{sgd}}\sum_{k<K_\msf{sgd}}\hat{\omega}_n(k)$, followed by
$$\Phi(n+1)=\Phi(n)+\eta_\msf{npg}\cdot\omega_n.$$

In the following, we present a theoretical analysis of this policy optimization algorithm.

\subsection{Theoretical Analysis of Rec-NAC for POMDPs}

We establish an error bound on the best-iterate for the Rec-NPG. The significance of the following result is two-fold: (i) it will explicitly connect the optimality gap to the compatible function approximation error, and (ii) it will explicitly show the impact of truncation on the performance of path-based policy optimization for the non-stationary case.

\begin{theorem}
    Assume that $P_T^{\pi^\star,\mu}$ is absolutely continuous with respect to $P_T^{\pi^{\Phi(n),\mu}}$ for all $n<N$. Under this assumption, let $$\kappa := \max_{0\leq n < N}\LL\|\frac{P_T^{\pi^\star,\mu}}{P_T^{\pi^{\Phi(n),\mu}}}\RR\|_\infty$$ be the concentrability coefficient, and $$V_n := \cV^{\pi^\star}(\mu)-\cV^{\pi^{\Phi(n)}}(\mu),~n<N$$ be the optimality gap. Rec-NPG after $N\in\bZ_+$ steps with step-size $\eta_\msf{npg} = \frac{1}{\sqrt{N}}$ and projection radius $\rho\in\bR_{>0}^2$ yields
    \begin{align*}
        \min_{0\leq n<N}\bE_0[V_n] \lesssim \frac{\ln|\bA|}{(1-\gamma)\sqrt{N}}+ \frac{\|\rho\|_2^2}{1-\gamma}\frac{p_T(\alpha_m\varrho_1)}{m^\frac{1}{4}}+\frac{\gamma^Tr_\infty}{(1-\gamma)^2}+\frac{\sqrt{\kappa}}{N\sqrt{1-\gamma}}\sum_{n=0}^{N-1}\bE_0\big(\varepsilon_\msf{cfa}^T({\Phi(n)},\omega_n)\big)^\frac{1}{2},
\end{align*}
where $\bE_0$ is the conditional expectation given the symmetric random initialization $(c^0,\Phi(0))\sim\zeta_0$, and $$\varepsilon_\msf{cfa}^T(\Phi,\omega) := \sum_{t<T}\gamma^t|\nabla^\top\ln\pi_t^\Phi(A_t|Z_t)\omega-\cA_t^{\pi^{\Phi}}(Z_t,A_t)|^2.$$
\label{thm:npg}
\end{theorem}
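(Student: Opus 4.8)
The plan is to analyze Rec-NPG as an \emph{approximate mirror descent} over the space of non-stationary softmax policies, combining a path-based performance difference identity with a telescoping Kullback--Leibler potential, in the spirit of the neural NPG analyses for MDPs \citep{agarwal2020optimality, wang2019neural, cai2019neural} but carried out trajectory-wise to accommodate non-stationarity. First I would establish the un-normalized, non-stationary performance difference lemma
\[
V_n = \bE_\mu^{\pi^\star}\Big[\sum_{t=0}^{\infty}\gamma^t \cA_t^{\pi^{\Phi(n)}}(Z_t,A_t)\Big],
\]
where the expectation is over trajectories drawn under $\pi^\star$ from $\mu$; this is the POMDP analogue of the Kakade--Langford lemma and follows from $\bE_{a\sim\pi_t^{\Phi(n)}}[\cA_t^{\pi^{\Phi(n)}}]=0$ together with a telescoping of the value along the history process $Z_t$. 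Here the compatible surrogate $\omega_n$ plays the role of the natural gradient by Proposition \ref{prop:cfa}. Next I would introduce the discounted path KL potential
\[
\Lambda_n := \bE_\mu^{\pi^\star}\Big[\sum_{t=0}^{\infty}\gamma^t \KL\big(\pi_t^{\star}(\cdot\mid Z_t)\,\big\|\,\pi_t^{\Phi(n)}(\cdot\mid Z_t)\big)\Big],
\]
noting that symmetric initialization forces $F_t(\cdot;\Phi(0))\equiv 0$, hence $\pi_t^{\Phi(0)}$ is uniform and $\Lambda_0\le \ln|\bA|/(1-\gamma)$.

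The core step is a one-step drift estimate for $\Lambda_n-\Lambda_{n+1}$. Using $\KL(p\|q_1)-\KL(p\|q_2)=\bE_{a\sim p}[\ln(q_2(a)/q_1(a))]$ with $q_1=\pi_t^{\Phi(n)}$, $q_2=\pi_t^{\Phi(n+1)}$, and the softmax form \eqref{eqn:softmax-policy}, I would expand $\ln \pi_t^{\Phi(n+1)} - \ln \pi_t^{\Phi(n)}$ into three pieces: (i) the \emph{compatible linear term} $\eta\,\nabla^\top\ln\pi_t^{\Phi(n)}(\cdot)\,\omega_n$, obtained by first-order Taylor expansion of $F_t$ along $\Phi(n+1)=\Phi(n)+\eta\omega_n$ (the baseline merging into the normalizer below); (ii) a \emph{log-normalizer} term, upper bounded by $\tfrac12\eta^2 B^2$ via the sub-Gaussian inequality $\ln\bE_{a\sim\pi_t^{\Phi(n)}}[e^{\eta X}]\le \tfrac12\eta^2\|X\|_\infty^2$; and (iii) the \emph{second-order recurrent linearization remainder} $F_t(\cdot;\Phi(n+1))-F_t(\cdot;\Phi(n))-\eta\nabla^\top F_t(\cdot;\Phi(n))\omega_n$. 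The remainder is where the recurrent structure enters: bounding the Hessian of $F_t$ over $\Omega_{\rho,m}$ through the smoothness constants $\varrho_1,\varrho_2$ and the max-norm bound $|W_{ii}|\le\alpha_m$ yields a factor growing with sequence length as $p_T(\alpha_m\varrho_1)$ (an exploding-gradient phenomenon analogous to Remark \ref{remark:coh-td}), and the $1/\sqrt m$ scaling of the readout together with $\|\omega_n\|\lesssim\|\rho\|_2$ produces the $\|\rho\|_2^2\,p_T(\alpha_m\varrho_1)\,m^{-1/4}$ term after summation.

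It remains to convert the compatible linear term into the true advantage and to control the two distributional subtleties. Adding and subtracting $\cA_t^{\pi^{\Phi(n)}}$ inside the $\pi^\star$-expectation, I would bound $\bE_\mu^{\pi^\star}[\sum_{t<T}\gamma^t(\nabla^\top\ln\pi_t^{\Phi(n)}\omega_n-\cA_t^{\pi^{\Phi(n)}})]$ by a change of measure: since $\omega_n$ is fit under $P_T^{\pi^{\Phi(n)},\mu}$ but the drift is weighted by $P_T^{\pi^\star,\mu}$, Cauchy--Schwarz against the likelihood ratio and over the discounted time-sum gives exactly $\tfrac{\sqrt{\kappa}}{\sqrt{1-\gamma}}\,(\bE_0\,\varepsilon_\msf{cfa}^T(\Phi(n),\omega_n))^{1/2}$, where the critic error from Theorem \ref{thm:rec-td} is already folded into $\varepsilon_\msf{cfa}^T$. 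The infinite-horizon tail $\sum_{t\ge T}\gamma^t\cA_t$ is controlled using $|\cA_t|\le 2r_\infty/(1-\gamma)$, producing $\gamma^T r_\infty/(1-\gamma)^2$. Finally I would sum the one-step inequalities over $n=0,\dots,N-1$, telescope $\Lambda_n$ (discarding $-\Lambda_N\le 0$), divide by $\eta N$, invoke the performance difference lemma to replace $\bE_\mu^{\pi^\star}[\sum_t\gamma^t\cA_t^{\pi^{\Phi(n)}}]$ by $V_n$, and use $\min_n V_n\le \tfrac1N\sum_n V_n$ with $\eta_\msf{npg}=1/\sqrt N$; the $\Lambda_0/(\eta N)$ contribution then yields $\ln|\bA|/((1-\gamma)\sqrt N)$ and the normalizer contributes at the same $1/\sqrt N$ order.

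The main obstacle I anticipate is step (iii): unlike the feedforward case, the recurrent linearization remainder must be controlled \emph{uniformly along the trajectory} $Z_t$, and its natural bound amplifies with $t$ through the spectral-radius-driven factor $p_T(\alpha_m\varrho_1)$, so one must simultaneously (a) keep this amplification in check via the max-norm projection onto $\Omega_{\rho,m}$, (b) ensure $\|\omega_n\|$ stays $O(\|\rho\|_2)$ under the projected SGD used by the actor, and (c) reconcile the mismatch between the sampling law $P_T^{\pi^{\Phi(n)},\mu}$ (under which $\omega_n$ is learned and the critic is evaluated) and the law $P_T^{\pi^\star,\mu}$ (under which the mirror-descent drift is measured), which is precisely what forces the concentrability coefficient $\kappa$ and the truncation level $T$ into the bound.
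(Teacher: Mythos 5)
Your proposal is correct and follows essentially the same route as the paper's proof: a discounted KL potential over $\pi^\star$-trajectories, a one-step drift bound whose first-order term is converted into the advantage via the pathwise performance difference lemma and a $\sqrt{\kappa}$ change of measure, an RNN linearization remainder over $\Omega_{\rho,m}$ producing the $\|\rho\|_2^2\,p_T(\alpha_m\varrho_1)\,m^{-1/4}$ term, the $\gamma^T$ truncation tail, and telescoping with $\eta_\msf{npg}=1/\sqrt{N}$. The only points of divergence are cosmetic --- the paper routes your steps (ii)--(iii) through an explicit log-linearized policy $\tilde{\pi}_t^\Phi$ plus Pinsker's inequality rather than a direct log-normalizer bound, and your sketch glosses over the POMDP-specific fact (established separately in the paper via policy-independence of the belief) that $\bE^{\pi^\star}[r_t+\gamma\cV_{t+1}^{\pi}(Z_{t+1})\mid \Z_t]=\cQ_t^{\pi}(\Z_t)$, which is what licenses the telescoping under $\pi^\star$-trajectories.
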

\begin{remark}
    We have the following remarks.
    
\begin{itemize}
    \item The effectiveness of Rec-NPG is proportional to the approximation power of the IndRNN used for policy parameterization, as reflected in $\varepsilon_\msf{cfa}^T$ in Theorem \ref{thm:npg}. We further characterize this error term in Propositions \ref{prop:decomposition}-\ref{prop:approximation} in the following.
    
    \item The terms $L_t,\beta_t,\Lambda_t,\chi_t$ grow at a rate $p_t(\varrho_1\alpha_m)$. Thus, if $\alpha_m > \varrho_1^{-1}$, then $m$ and $N$ should grow at a rate $(\alpha_m\varrho_1)^T$, implying the curse of dimensionality (more generally, it is known as the exploding gradient problem \cite{goodfellow2016deep}). On the other hand, if $\alpha_m < \varrho_1^{-1}$, then $L_t,\beta_t,\Lambda_t,\chi_t$ are all $\cO(1)$ for all $t$, implying efficient learning of POMDPs. This establishes a very interesting connection between the memory in the system, the continuity and smoothness of the RNN with respect to its parameters, and the optimality gap under Rec-NPG.
    
    \item The term $\frac{2\gamma^Tr_\infty}{(1-\gamma)^2}$ is due to truncating the trajectory at $T$, and vanishes with large $T$.

    \revision{\item Rec-NPG achieves $\epsilon$-optimality (up to the compatible function approximation and truncation errors) with $N=\cO(1/\epsilon^2)$ steps and $m=\cO(1/\epsilon^4)$ neural network width for any $\epsilon > 0$.}
    \end{itemize}
\label{remark:coh-npg}
\end{remark}

\begin{remark}
    The quantity $\kappa$ in Proposition \ref{prop:approximation} is the so-called concentrability coefficient in policy gradient methods \citep{agarwal2020optimality, bhandari2019global, wang2019neural}, and determines the complexity of exploration. Note that it is defined in terms of path probabilities $P_T^{\pi,\mu}$ in the non-stationary setting. \revision{By making the assumption $\kappa < \infty$, we assume that the policies $\pi^{\Phi(n)}$ perform sufficient exploration to visit each trajectory visited by $\pi^\star$ with positive probability. In order to establish similar bounds without this assumption, entropic regularization is widely used to encourage exploration in practical scenarios \cite{ahmed2019understanding, cen2020fast, cayci2024convergence2}. The benefits of using entropic regularization in policy optimization for POMDPs to encourage exploration is an interesting future research direction.} 
\end{remark}

In the following, we decompose the compatible function approximation error $\varepsilon_\msf{cfa}^T$ into the approximation error for the RNN and the statistical errors. To that end, let
\begin{equation*}
    \varepsilon_{\msf{app},n} = \hskip -4pt \inf_{\omega\in\Omega_{\rho,m}}\hskip -4pt\bE\sum_{t < T}\gamma^t\big|\nabla^\top F_t(\Z_t;\Phi(0))\omega-\cQ_t^{\pi^{\Phi(n)}}(\Z_t)\big|^2,
    \end{equation*}
    be the approximation error where the expectation is with respect to $P_T^{\pi^{\Phi(n)},\mu}$,
    $$\varepsilon_{\msf{td},n} = \bE[\cR_T^{\pi^{\Phi(n)}}(\bar{\Theta}\ui{n})|\Phi(k),k \leq n],$$ be the error in the critic (see \eqref{eqn:policy-evaluation-loss}), and finally let
        \begin{equation*}
    \varepsilon_{\msf{sgd},n} = \bE[\ell_T(\omega_n;\Phi(n),\hat{\cQ}\ui{n})|\bar{\Theta}\ui{n},\Phi(k),k\leq n]-\inf_{w}\bE[\ell_T(\omega;\Phi(n),\hat{\cQ}\ui{n})|\bar{\Theta}\ui{n},\Phi(k), k\leq n], 
    \end{equation*}
be the error in the policy update via compatible function approximation.
\begin{proposition}[Error decomposition for $\varepsilon_\msf{cfa}^T$]
    For any $n\in \bZ_+$, we have 
    \begin{equation*}
\bE\Big[\bE_\mu^{\pi^{\Phi(n)}}\LL[\ell_T(\omega_n;\Phi(n),\cQ\ui{n})\RR]\Big|\Phi(k),k\leq n\Big] \leq \frac{8\|\rho\|_2^2}{m}\sum_{t=0}^{T-1}\gamma^t\beta_t^2+8\varepsilon_{\msf{app},n}+6\varepsilon_{\msf{td},n}+2\varepsilon_{\msf{sgd},n}.
    \end{equation*}
    \label{prop:decomposition}
\end{proposition}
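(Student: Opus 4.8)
The plan is to read the left-hand side as the path-based compatible-approximation loss $\ell_T(\omega_n;\Phi(n),\cQ\ui{n})$ built from the \emph{true} advantage $\cA_t^{\pi^{\Phi(n)}}$, and to separate it into the four advertised error sources by repeatedly applying $(a+b)^2\le 2a^2+2b^2$, using two facts about the \textsc{SoftMax} map \eqref{eqn:softmax-policy}. The \emph{centering identity}: differentiating \eqref{eqn:softmax-policy} shows that $\nabla^\top\ln\pi_t^{\Phi(n)}(A_t|Z_t)\,\omega$ is the $\pi^{\Phi(n)}$-centered version of the linear model $g_t(z_t,a):=\nabla^\top F_t((z_t,a);\Phi(n))\,\omega$, exactly as $\cA_t^{\pi^{\Phi(n)}}$ is the centering of $\cQ_t^{\pi^{\Phi(n)}}$. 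The \emph{variance-reduction inequality}: centering never increases the conditional second moment, i.e. for any $Z_t$-measurable $\Delta$, $\bE_{A_t\sim\pi}[(\Delta-\bE_{A_t\sim\pi}[\Delta\mid Z_t])^2\mid Z_t]=\Var_{A_t\sim\pi}[\Delta\mid Z_t]\le\bE_{A_t\sim\pi}[\Delta^2\mid Z_t]$, so every step that discards a centering only relaxes the bound.

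With these in hand I would chain four inequalities, all carried out conditionally on $(\bar{\Theta}\ui{n},\Phi(k)_{k\le n})$ and then averaged. (i) Replacing the true advantage by the critic advantage $\hat{\cA}_t\ui{n}$ costs a factor $2$ plus an additive $2\sum_t\gamma^t(\hat{\cA}_t\ui{n}-\cA_t\ui{n})^2$; since $\hat{\cA}_t\ui{n}-\cA_t\ui{n}$ is the centering of $\hat{\cQ}_t\ui{n}-\cQ_t^{\pi^{\Phi(n)}}$, the variance-reduction inequality and the definition of $\cR_T^{\pi^{\Phi(n)}}$ give $\bE\sum_t\gamma^t(\hat{\cA}_t\ui{n}-\cA_t\ui{n})^2\le\varepsilon_{\msf{td},n}$, so this contributes $2\varepsilon_{\msf{td},n}$. (ii) The definition of $\varepsilon_{\msf{sgd},n}$ upgrades $\bE[\ell_T(\omega_n;\Phi(n),\hat{\cQ}\ui{n})]$ to $\inf_{\omega\in\Omega_{\rho,m}}\bE[\ell_T(\omega;\Phi(n),\hat{\cQ}\ui{n})]+\varepsilon_{\msf{sgd},n}$. (iii) Inside that infimum, evaluating at the minimizer of the true-advantage objective and splitting $\hat{\cA}\to\cA$ again costs a factor $2$ and a further $2\varepsilon_{\msf{td},n}$, leaving $\inf_{\omega\in\Omega_{\rho,m}}\bE\sum_t\gamma^t(\nabla^\top\ln\pi_t^{\Phi(n)}(A_t|Z_t)\omega-\cA_t^{\pi^{\Phi(n)}})^2$. (iv) The centering identity lets me drop the centering and bound this by the uncentered $Q$-approximation error $\inf_\omega\bE\sum_t\gamma^t(\nabla^\top F_t(\Z_t;\Phi(n))\omega-\cQ_t^{\pi^{\Phi(n)}}(\Z_t))^2$ at the \emph{current} parameter $\Phi(n)$.

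The remaining step transports the tangent features from $\Phi(n)$ to the initialization $\Phi(0)$ so as to hit the definition of $\varepsilon_{\msf{app},n}$. Writing $\nabla F_t(\cdot;\Phi(n))\omega=\nabla F_t(\cdot;\Phi(0))\omega+[\nabla F_t(\cdot;\Phi(n))-\nabla F_t(\cdot;\Phi(0))]\omega$ and applying $(a+b)^2\le 2a^2+2b^2$ produces $2\varepsilon_{\msf{app},n}$ plus twice a feature-perturbation term bounded by $\|\nabla F_t(\cdot;\Phi(n))-\nabla F_t(\cdot;\Phi(0))\|_2^2\,\|\omega\|_2^2$. Invoking the overparameterized IndRNN smoothness estimates (per-coordinate closeness $\rho/\sqrt m$ on $\Omega_{\rho,m}$ summed across the $m$ hidden units, with the recurrence contributing the growth factor $\beta_t\sim p_t(\varrho_1\alpha_m)$) yields $\|\nabla F_t(\cdot;\Phi(n))-\nabla F_t(\cdot;\Phi(0))\|_2\lesssim\beta_t\|\rho\|_2/\sqrt m$, and, since the approximating direction has order-one norm, the per-step contribution is $O(\beta_t^2\|\rho\|_2^2/m)$. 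Tracking the factors of two—$\varepsilon_{\msf{app},n}$ and the $\beta_t$ term each pass through the doublings of steps (iv$\to$iii$\to$i) to acquire the coefficient $8$, the two $\varepsilon_{\msf{td},n}$ contributions (one entering directly, one under a single doubling) sum to $6\varepsilon_{\msf{td},n}$, and $\varepsilon_{\msf{sgd},n}$ passes through one doubling to give $2\varepsilon_{\msf{sgd},n}$—reproduces exactly $8\varepsilon_{\msf{app},n}+\frac{8\|\rho\|_2^2}{m}\sum_t\gamma^t\beta_t^2+6\varepsilon_{\msf{td},n}+2\varepsilon_{\msf{sgd},n}$.

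I expect the feature-transport estimate of the last step to be the main obstacle: for a recurrent architecture $\nabla_\Theta F_t$ is a sum over the hidden units of hidden-state Jacobians that accumulate products of $\varrho'$ across all $t$ time steps, so proving $\|\nabla F_t(\cdot;\Phi)-\nabla F_t(\cdot;\Phi(0))\|_2\lesssim\beta_t\|\rho\|_2/\sqrt m$ uniformly over $\Omega_{\rho,m}$—with the sharp $p_t(\varrho_1\alpha_m)$ dependence and the $1/\sqrt m$ lazy-training gain—requires the dedicated IndRNN Hessian/smoothness lemmas rather than one-step feedforward bounds, and is exactly where the $\alpha_m>\varrho_1^{-1}$ exploding-gradient pathology would enter. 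The only other delicate point is the bookkeeping of nested conditional expectations: the variance-reduction and SGD-optimality steps must be executed conditionally on $(\bar{\Theta}\ui{n},\Phi(k)_{k\le n})$ and only then integrated over the critic's and SGD's internal randomness, so that the three error terms coincide with their definitions $\varepsilon_{\msf{td},n}$, $\varepsilon_{\msf{sgd},n}$ and $\varepsilon_{\msf{app},n}$.
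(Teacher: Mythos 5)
Your proposal follows essentially the same route as the paper's proof: the same two $(a+b)^2\le 2a^2+2b^2$ doublings around the critic/true-advantage swap, the same use of the SGD-suboptimality definition conditionally on $\ccH_n=\sigma(\bar{\Theta}\ui{n},\Phi(k),k\le n)$, the same variance-reduction step to bound the centered advantage error by $\varepsilon_{\msf{td},n}$ and to drop the centering before invoking $\varepsilon_{\msf{app},n}$, and the same feature transport from $\Phi(n)$ to $\Phi(0)$ via the bound $\|\nabla F_t(\cdot;\Phi(n))-\nabla F_t(\cdot;\Phi(0))\|_2\le \beta_t\|\rho\|_2/\sqrt m$, with identical constant bookkeeping ($8,8,6,2$). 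The only discrepancy is cosmetic: carrying $\|\omega\|_2\le\|\rho\|_2$ through the perturbation term gives $\beta_t^2\|\rho\|_2^4/m$, which is what the paper's own proof obtains, rather than the $\|\rho\|_2^2$ you quote (and which appears in the proposition statement).
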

From Theorem \ref{thm:rec-td}, we have, for $\eta_\msf{td}=\cO(1/\sqrt{K_\msf{td}})$,
\begin{equation*}
    \varepsilon_{\msf{td},n} \leq \mathbf{poly}(p_T(\varrho_1\alpha_m))\cO\LL(\frac{1}{\sqrt{K_\msf{td}}}+\frac{1}{\sqrt{m_\msf{critic}}}+\gamma^T\RR),
\end{equation*}
and by Theorem 14.8 in \cite{shalev2014understanding}, we have, for $\eta_\msf{sgd}=\cO(1/\sqrt{K_\msf{sgd}})$,
\begin{equation*}
    \varepsilon_{\msf{sgd}, n} \leq \mathbf{poly}(p_T(\varrho_1\alpha_m),\|\rho\|_2)\cO(1/\sqrt{K_\msf{sgd}}).
\end{equation*}
As such, the statistical errors in the critic and the policy update (i.e., $\varepsilon_{\msf{td},n}, \varepsilon_{\msf{sgd},n}$) can be made arbitrarily small by using larger $K_\msf{td},K_\msf{sgd}$ and larger $m_\msf{critic}$. The remaining quantity to characterize is the approximation error, which is of critical importance for a small optimality gap as shown in Theorem \ref{thm:npg} and Proposition \ref{prop:decomposition}. In the following, we will provide a finer characterization of $\varepsilon_{\msf{app},n}$ and identify a class of POMDPs that can be efficiently solved using Rec-NPG.

\begin{assumption}
    For an index set $J$ and $\nu\in\bR_{>0}^2$, we consider a class $\mathscr{H}_{J,\nu}$ of transportation mappings $$\LL\{\bm{v}\ui{j}\in\mathscr{H}:j\in J,\begin{pmatrix}
        \sup\limits_{w\in\bR, j\in J}|v_\msf{w}\ui{j}(w)|\\
        \sup\limits_{u\in\bR^d,j\in J}\|v_\msf{u}\ui{j}(u)\|_2
    \end{pmatrix} \leq 
    \begin{pmatrix}
        \nu_\msf{w}\\\nu_\msf{u}
    \end{pmatrix}\RR\},$$ and also the corresponding infinite-width limit
    \begin{equation*}
        \cF_{J,\nu} := \{\z\mapsto\bE[\Psi(\z;\theta_0)\bm{v}(\theta_0)]:\bm{v}\in\mathbf{Conv}(\mathscr{H}_{J,\nu})\},
    \end{equation*}
    where $\Psi(\cdot;\theta_0)$ is the NTRF matrix, defined in \eqref{eqn:ntrf-matrix}.
    
    We assume that there exists an index set $J$ and $\nu\in\bR_{>0}^2$ such that $\cQ^{\pi^{\Phi(n)}}\in\mathscr{F}_{J,\nu}$ for all $n\in\bN$.
    \label{assumption:ntk}
  \end{assumption}
  This representational assumption implies that the $\cQ$-functions under all iterate policies $\pi^{\Phi(n)}$ throughout the Rec-NPG iterations $n=0,1,\ldots$ can be represented by convex combinations of a \textit{fixed} set of mappings in the NTK function class $\cF$ indexed by $J$. As we will see, the richness of $J$ as measured by a relevant Rademacher complexity will play an important role in bounding the approximation error. To that end, for $\z_t=(z_t,a_t)\in(\bY\times\bA)^{t+1}$, let
\begin{equation*}
    G_t^{\z_t} := \{\phi\mapsto \nabla_{\phi}^\top H_t\ui{1}(\z_t;\phi)\bv(\phi):\bv\in\cH_{J,\nu}\},
\end{equation*}
and 
$$\mathrm{Rad}_m(G_t^{\z_t}) := \underset{\substack{\epsilon\sim \msf{Rad}^m(1)\\\Phi(0)\sim\zeta_\msf{init}}}{\bE}~\sup_{g\in G_t^{\z_t}}\frac{1}{m}\sum_{i=1}^m\epsilon_i g(\Phi_i(0)).$$
Note that $\bv\in\cH_{J,\nu}$ above can be replaced with $\bv\in \mathbf{Conv}(\cH_{J,\nu})$ without any loss. In that case, since the mapping $\bv\ui{j}\mapsto f_t^\star(\z_t;\bv\ui{j})\in G_t^{\z_t}$ is linear, $G_t^{\z_t}$ is replaced with $\mathbf{Conv}({G_t^{\z_t}})$ without changing the Rademacher complexity \citep{mohri2018foundations}.

The following provides a finer characterization of the approximation error.
\begin{proposition}
    
Under Assumption \ref{assumption:ntk}, if $\rho \succeq \nu$, then
    \begin{align*}
        {\epsilon_{\msf{app},n}} \leq \frac{1}{1-\gamma}&\Bigg(2\max_{0\leq t<T}\max_{\z_t\in(\bY\times\bA)^{t+1}}~\mathrm{Rad}_m(G_t^{\z_t})+L_T\|\rho\|_2\sqrt{\frac{\ln\LL(2T|\bY\times\bA|^{T}/\delta\RR)}{m}}\Bigg)^2,
    \end{align*}
    for all $n$ simultaneously with probability at least $1-\delta$ over the random initialization for any $\delta \in (0,1)$.
    \label{prop:approximation}
\end{proposition}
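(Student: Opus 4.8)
The plan is to exhibit an explicit near-optimal choice of $\omega\in\Omega_{\rho,m}$ built from the transportation mapping supplied by Assumption~\ref{assumption:ntk}, and then control the resulting error by a uniform law of large numbers over the random initialization. Fix $n$ and use Assumption~\ref{assumption:ntk} to write $\cQ_t^{\pi^{\Phi(n)}}(\z_t)=f_t^\star(\z_t;\bv)=\bE_{\theta_0}[\psi_t^\top(\z_t;\theta_0)\bv(\theta_0)]$ for a single $\bv=\bv^{(n)}\in\mathbf{Conv}(\cH_{J,\nu})$ and all $t<T$. Recalling the symmetric initialization and $F_t(\Z;\Phi(0))=0$, the single-neuron gradient satisfies $\nabla_{\Theta_i}F_t(\Z_t;\Phi(0))=\tfrac{c_i}{\sqrt m}\psi_t(\z_t;\Phi_i(0))$, so I would set $\omega_i:=\tfrac{c_i}{\sqrt m}\bv(\Phi_i(0))$. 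Since $c_i\in\{-1,+1\}$, this yields the clean identity $\nabla^\top F_t(\Z_t;\Phi(0))\omega=\tfrac1m\sum_{i=1}^m\psi_t^\top(\z_t;\Phi_i(0))\bv(\Phi_i(0))$, i.e.\ a Monte Carlo estimate of $\cQ_t^{\pi^{\Phi(n)}}(\z_t)$. Because $\sup_w|v_\msf{w}|\le\nu_\msf{w}$ and $\sup_u\|v_\msf{u}\|\le\nu_\msf{u}$ with $\rho\succeq\nu$, the per-coordinate bounds $|\omega_{i,\msf{w}}|\le\nu_\msf{w}/\sqrt m\le\rho_\msf{w}/\sqrt m$ and $\|\omega_{i,\msf{u}}\|\le\rho_\msf{u}/\sqrt m$ hold, so $\omega\in\Omega_{\rho,m}$ is admissible in the infimum defining $\varepsilon_{\msf{app},n}$.

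With this choice the integrand $\nabla^\top F_t(\Z_t;\Phi(0))\omega-\cQ_t^{\pi^{\Phi(n)}}(\Z_t)$ is exactly the centered empirical process $\tfrac1m\sum_i g_\bv(\Phi_i(0))-\bE[g_\bv(\theta_0)]$ for $g_\bv(\phi):=\psi_t^\top(\z_t;\phi)\bv(\phi)\in G_t^{\z_t}$. Since $\Z_t$ ranges over the finite set $(\bY\times\bA)^{t+1}$, I bound $\bE[\,\cdot\,]$ by the pointwise maximum and use $\sum_{t<T}\gamma^t\le 1/(1-\gamma)$, reducing the claim to a uniform deviation bound: it suffices to show that, with probability $\ge1-\delta$, the quantity $\sup_{\bv}\big|\tfrac1m\sum_i g_\bv(\Phi_i(0))-\bE g_\bv\big|$ is at most $2\,\mathrm{Rad}_m(G_t^{\z_t})+L_T\|\rho\|_2\sqrt{\ln(2T|\bY\times\bA|^T/\delta)/m}$ simultaneously for every $t<T$ and $\z_t\in(\bY\times\bA)^{t+1}$. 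For a single pair $(t,\z_t)$ this is the textbook route: standard symmetrization bounds the expected supremum by $2\,\mathrm{Rad}_m(G_t^{\z_t})$, while McDiarmid's bounded-difference inequality---each coordinate $\Phi_i(0)$ affects the average by at most $\tfrac2m\sup_\bv\|g_\bv\|_\infty$---yields the sub-Gaussian deviation term. A union bound over the at most $T|\bY\times\bA|^T$ pairs $(t,\z_t)$ produces the logarithmic factor $\ln(2T|\bY\times\bA|^T/\delta)$, and replacing each $\mathrm{Rad}_m(G_t^{\z_t})$ by its maximum over $t,\z_t$ (and each $L_t$ by $L_T$) gives the stated uniform bound $B$. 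Squaring and summing then yields $\varepsilon_{\msf{app},n}\le B^2/(1-\gamma)$. Crucially, $B$ depends on neither $\bv$ nor $n$, so the same bound holds for all iterates simultaneously, since every $\bv^{(n)}$ lies in the fixed class $\mathbf{Conv}(\cH_{J,\nu})$.

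The main obstacle is the recurrent-specific boundedness estimate that feeds both the McDiarmid constant and the constant $L_T$: one must control $\|\psi_t(\z_t;\phi)\|$ uniformly over $\z_t$ and over $\phi$ drawn from the initialization. Unrolling the hidden-state recursion expresses $\psi_t$ as a sum of $t+1$ terms, the $k$-th scaled by $w_0^k\prod_{j\le k}\cI_{t-j}$ with $|w_0|=\alpha$ and $|\cI|\le\varrho_1$; summing the resulting geometric-type series produces exactly the polynomial growth $p_{t}(\varrho_1\alpha_m)$ that defines $L_t$ and explains the exploding-gradient pathology flagged in Remark~\ref{remark:coh-npg}. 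This is where the diagonal IndRNN structure is essential, as it lets the products factor neuron-by-neuron and keeps $\sup_\bv\|g_\bv\|_\infty\le L_t\|\rho\|_2$ finite and explicit; once this bound is in hand, the symmetrization and concentration steps are routine. A secondary point to check is that passing to $\mathbf{Conv}(\cH_{J,\nu})$ leaves $\mathrm{Rad}_m(G_t^{\z_t})$ unchanged (convexification does not increase Rademacher complexity), which legitimizes working with the convex hull in Assumption~\ref{assumption:ntk} while measuring complexity through $G_t^{\z_t}$.
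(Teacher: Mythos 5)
Your proposal is correct and follows essentially the same route as the paper: construct the explicit candidate $\omega_i=\tfrac{c_i}{\sqrt m}\bv(\Phi_i(0))$ from the transportation mapping, observe it is an unbiased Monte Carlo estimate of $f_t^\star$, establish a uniform deviation bound via symmetrization ($2\,\mathrm{Rad}_m(G_t^{\z_t})$) plus McDiarmid with bounded differences $L_t\|\rho\|_2/m$, union-bound over the at most $T|\bY\times\bA|^T$ pairs $(t,\z_t)$, and sum the squared errors against $\sum_{t<T}\gamma^t\le 1/(1-\gamma)$. The only cosmetic difference is that you take the supremum directly over $\bv\in\mathbf{Conv}(\cH_{J,\nu})$ while the paper indexes over $j\in J$ and invokes the convexity-invariance of Rademacher complexity separately, which you also correctly note.
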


\begin{remark}
    An interesting case that lead to a vanishing approximation error (as $m\rightarrow \infty$) is $|J|<\infty$. Then, Proposition \ref{prop:approximation} reduces to \cite{cayci2024finitetime} (with $T=1$ for FNNs) with the complexity term $\cO\LL(\sqrt{\frac{\ln(|J|/\delta)}{m}}\RR)$ by the finite-class lemma \citep{mohri2018foundations}. In this case, the $\cQ$-functions throughout $n=0,1,\ldots$ lie in the convex hull of $|J|$ fixed functions in $\cF$ generated by $\{\bv\ui{j}\in\cH:j\in J\}$.
        
\end{remark}
\begin{remark}
    As noted in \cite{cayci2024finitetime}, in a \emph{static} problem (e.g., the regression problem in supervised learning or policy evaluation in Section \ref{sec:rec-td}) with a target function $f\in\mathscr{F}$, the approximation error is easy to characterize:
\begin{equation}
    \LL|\nabla^\top F_t(\z_t;\Phi(0))\omega^\star - f_t(\z_t)\RR| = \cO\LL(\sqrt{\frac{\ln{(1/\delta)}}{m}}\RR),
\end{equation}
by Hoeffding inequality with $\omega^\star := \LL[\frac{1}{\sqrt{m}}c_i\bm{v}(\Phi_i(0))\RR]_{i\in[m]}$. 

In the \emph{dynamical} policy optimization problem, the representational assumption $\cQ^{\pi^{\Phi(n)}}\in\cF$ does not imply arbitrarily small approximation error as $m\rightarrow\infty$ since the function $\cQ^{\pi^{\Phi(n)}}$ also depends on $\Phi(0)$. Thus, $$\nabla^\top F_t(\z_t;\Phi(0))\omega_n^\star=\sum_{i=1}^m\frac{\nabla^\top H_t\ui{i}(\z_t;\Phi(0))\bm{v}^{\Phi(n)}(\Phi_i(0))}{m}$$ with $\omega_n^\star := [\frac{1}{\sqrt{m}}c_i\bm{v}^{\Phi(n)}(\Phi_i(0))]_{i\in[m]}$ for $\bv^{\Phi(n)}\in\mathscr{H}$ may not converge to the target function $\cQ^{\pi^{\Phi(n)}}$ as $m\rightarrow\infty$ because of the correlated $\nabla^\top H_t\ui{i}(\z_t;\Phi(0))\bm{v}^{\Phi(n)}(\Phi_i(0))$ across $i\in[m]$. To address this, we characterize the uniform approximation error as in Proposition \ref{prop:approximation} for the random features of the actor RNN in approximating all $\cQ^{\pi^{\Phi(n)}}$ for all $n$ based on Rademacher complexity.
\end{remark}

\section{Conclusion}
We studied RNN-based policy evaluation and policy optimization methods with finite-time analyses, which demonstrate the effectiveness of the NPG method equipped with RNNs for POMDPs. An important limitation of Rec-NPG is that its memory and sample complexity significantly increases in POMDPs with long-term dependencies as pointed out in Remarks \ref{remark:coh-td}-\ref{remark:coh-npg}. \revision{In order to mitigate these issues, as an extension of this work, input normalization \citep{zucchet2024recurrent} and preconditioned Rec-TD updates to incorporate curvature information \citep{martens2011learning} are important directions for future research.}

\subsubsection*{Acknowledgments}
This work was funded by the Federal Ministry of Education and Research (BMBF) and the Ministry of Culture and Science of the German State of North Rhine-Westphalia (MKW) under the Excellence Strategy of the Federal Government and the L\"{a}nder. Atilla Eryilmaz's research was supported in part by NSF AI Institute (AI-EDGE) 2112471, CNS-NeTS-2106679; and the ARO Grant W911NF-24-1-0103

\bibliography{main}

\begin{thebibliography}{58}
\providecommand{\natexlab}[1]{#1}
\providecommand{\url}[1]{\texttt{#1}}
\expandafter\ifx\csname urlstyle\endcsname\relax
  \providecommand{\doi}[1]{doi: #1}\else
  \providecommand{\doi}{doi: \begingroup \urlstyle{rm}\Url}\fi

\bibitem[Agarwal et~al.(2020)Agarwal, Kakade, Lee, and Mahajan]{agarwal2020optimality}
Alekh Agarwal, Sham~M Kakade, Jason~D Lee, and Gaurav Mahajan.
\newblock Optimality and approximation with policy gradient methods in markov decision processes.
\newblock In \emph{Conference on Learning Theory}, pp.\  64--66. PMLR, 2020.

\bibitem[Ahmed et~al.(2019)Ahmed, Le~Roux, Norouzi, and Schuurmans]{ahmed2019understanding}
Zafarali Ahmed, Nicolas Le~Roux, Mohammad Norouzi, and Dale Schuurmans.
\newblock Understanding the impact of entropy on policy optimization.
\newblock In \emph{International Conference on Machine Learning}, pp.\  151--160. PMLR, 2019.

\bibitem[Bai \& Lee(2019)Bai and Lee]{bai2019beyond}
Yu~Bai and Jason~D Lee.
\newblock Beyond linearization: On quadratic and higher-order approximation of wide neural networks.
\newblock \emph{arXiv preprint arXiv:1910.01619}, 2019.

\bibitem[Bertsekas \& Tsitsiklis(1996)Bertsekas and Tsitsiklis]{bertsekas1996neuro}
Dimitri~P Bertsekas and John~N Tsitsiklis.
\newblock \emph{Neuro-dynamic programming}.
\newblock Athena Scientific, 1996.

\bibitem[Bhandari \& Russo(2019)Bhandari and Russo]{bhandari2019global}
Jalaj Bhandari and Daniel Russo.
\newblock Global optimality guarantees for policy gradient methods.
\newblock \emph{arXiv preprint arXiv:1906.01786}, 2019.

\bibitem[Bhandari et~al.(2018)Bhandari, Russo, and Singal]{bhandari2018finite}
Jalaj Bhandari, Daniel Russo, and Raghav Singal.
\newblock A finite time analysis of temporal difference learning with linear function approximation.
\newblock In \emph{Conference on learning theory}, pp.\  1691--1692. PMLR, 2018.

\bibitem[Cai et~al.(2019)Cai, Yang, Lee, and Wang]{cai2019neural}
Qi~Cai, Zhuoran Yang, Jason~D Lee, and Zhaoran Wang.
\newblock Neural temporal-difference learning converges to global optima.
\newblock \emph{Advances in Neural Information Processing Systems}, 32, 2019.

\bibitem[Cayci \& Eryilmaz(2025)Cayci and Eryilmaz]{cayci2024convergence}
Semih Cayci and Atilla Eryilmaz.
\newblock Convergence of gradient descent for recurrent neural networks: A nonasymptotic analysis.
\newblock \emph{SIAM Journal on Mathematics of Data Science}, 7\penalty0 (2):\penalty0 826--854, 2025.

\bibitem[Cayci et~al.(2023)Cayci, Satpathi, He, and Srikant]{cayci2021sample}
Semih Cayci, Siddhartha Satpathi, Niao He, and Rayadurgam Srikant.
\newblock Sample complexity and overparameterization bounds for temporal difference learning with neural network approximation.
\newblock \emph{IEEE Transactions on Automatic Control}, 2023.

\bibitem[Cayci et~al.(2024{\natexlab{a}})Cayci, He, and Srikant]{cayci2023finitetime}
Semih Cayci, Niao He, and R~Srikant.
\newblock Finite-time analysis of natural actor-critic for pomdps.
\newblock \emph{SIAM Journal on Mathematics of Data Science}, 6\penalty0 (4):\penalty0 869--896, 2024{\natexlab{a}}.

\bibitem[Cayci et~al.(2024{\natexlab{b}})Cayci, He, and Srikant]{cayci2024finitetime}
Semih Cayci, Niao He, and R.~Srikant.
\newblock Finite-time analysis of entropy-regularized neural natural actor-critic algorithm.
\newblock \emph{Transactions on Machine Learning Research}, 2024{\natexlab{b}}.
\newblock ISSN 2835-8856.
\newblock URL \url{https://openreview.net/forum?id=BkEqk7pS1I}.

\bibitem[Cayci et~al.(2024{\natexlab{c}})Cayci, He, and Srikant]{cayci2024convergence2}
Semih Cayci, Niao He, and Rayadurgam Srikant.
\newblock Convergence of entropy-regularized natural policy gradient with linear function approximation.
\newblock \emph{SIAM Journal on Optimization}, 34\penalty0 (3):\penalty0 2729--2755, 2024{\natexlab{c}}.

\bibitem[Cen et~al.(2020)Cen, Cheng, Chen, Wei, and Chi]{cen2020fast}
Shicong Cen, Chen Cheng, Yuxin Chen, Yuting Wei, and Yuejie Chi.
\newblock Fast global convergence of natural policy gradient methods with entropy regularization.
\newblock \emph{arXiv preprint arXiv:2007.06558}, 2020.

\bibitem[Chizat et~al.(2019)Chizat, Oyallon, and Bach]{chizat2019lazy}
Lenaic Chizat, Edouard Oyallon, and Francis Bach.
\newblock On lazy training in differentiable programming.
\newblock \emph{Advances in Neural Information Processing Systems}, 32, 2019.

\bibitem[Cover \& Thomas(2006)Cover and Thomas]{cover2006elements}
Thomas~M Cover and Joy~A Thomas.
\newblock Elements of information theory (wiley series in telecommunications and signal processing), 2006.

\bibitem[Du et~al.(2018)Du, Zhai, Poczos, and Singh]{du2018gradient}
Simon~S Du, Xiyu Zhai, Barnabas Poczos, and Aarti Singh.
\newblock Gradient descent provably optimizes over-parameterized neural networks.
\newblock In \emph{International Conference on Learning Representations}, 2018.

\bibitem[Eberhard et~al.(2025)Eberhard, Muehlebach, and Vernade]{eberhard2025partially}
Onno Eberhard, Michael Muehlebach, and Claire Vernade.
\newblock Partially observable reinforcement learning with memory traces.
\newblock \emph{arXiv preprint arXiv:2503.15200}, 2025.

\bibitem[Elelimy et~al.(2024)Elelimy, White, Bowling, and White]{elelimy2024real}
Esraa Elelimy, Adam White, Michael Bowling, and Martha White.
\newblock Real-time recurrent learning using trace units in reinforcement learning.
\newblock \emph{arXiv preprint arXiv:2409.01449}, 2024.

\bibitem[Goodfellow et~al.(2013)Goodfellow, Warde-Farley, Mirza, Courville, and Bengio]{goodfellow2013maxout}
Ian Goodfellow, David Warde-Farley, Mehdi Mirza, Aaron Courville, and Yoshua Bengio.
\newblock Maxout networks.
\newblock In \emph{International conference on machine learning}, pp.\  1319--1327. PMLR, 2013.

\bibitem[Goodfellow et~al.(2016)Goodfellow, Bengio, and Courville]{goodfellow2016deep}
Ian Goodfellow, Yoshua Bengio, and Aaron Courville.
\newblock \emph{Deep learning}.
\newblock MIT press, 2016.

\bibitem[Guo et~al.(2022)Guo, Cai, Zhang, Yang, and Wang]{guo2022provably}
Hongyi Guo, Qi~Cai, Yufeng Zhang, Zhuoran Yang, and Zhaoran Wang.
\newblock Provably efficient offline reinforcement learning for partially observable markov decision processes.
\newblock In \emph{International Conference on Machine Learning}, pp.\  8016--8038. PMLR, 2022.

\bibitem[Ji \& Telgarsky(2019)Ji and Telgarsky]{ji2019polylogarithmic}
Ziwei Ji and Matus Telgarsky.
\newblock Polylogarithmic width suffices for gradient descent to achieve arbitrarily small test error with shallow relu networks.
\newblock \emph{arXiv preprint arXiv:1909.12292}, 2019.

\bibitem[Ji et~al.(2019)Ji, Telgarsky, and Xian]{ji2019neural}
Ziwei Ji, Matus Telgarsky, and Ruicheng Xian.
\newblock Neural tangent kernels, transportation mappings, and universal approximation.
\newblock In \emph{International Conference on Learning Representations}, 2019.

\bibitem[Jin et~al.(2020)Jin, Yang, Wang, and Jordan]{jin2020provably}
Chi Jin, Zhuoran Yang, Zhaoran Wang, and Michael~I Jordan.
\newblock Provably efficient reinforcement learning with linear function approximation.
\newblock In \emph{Conference on learning theory}, pp.\  2137--2143. PMLR, 2020.

\bibitem[Kakade \& Langford(2002)Kakade and Langford]{kakade2002approximately}
Sham Kakade and John Langford.
\newblock Approximately optimal approximate reinforcement learning.
\newblock In \emph{In Proc. 19th International Conference on Machine Learning}. Citeseer, 2002.

\bibitem[Kakade(2001)]{kakade2001natural}
Sham~M Kakade.
\newblock A natural policy gradient.
\newblock \emph{Advances in neural information processing systems}, 14, 2001.

\bibitem[Kara \& Y{\"u}ksel(2023)Kara and Y{\"u}ksel]{kara2023convergence}
Ali~Devran Kara and Serdar Y{\"u}ksel.
\newblock Convergence of finite memory q learning for pomdps and near optimality of learned policies under filter stability.
\newblock \emph{Mathematics of Operations Research}, 48\penalty0 (4):\penalty0 2066--2093, 2023.

\bibitem[Khodadadian et~al.(2021)Khodadadian, Jhunjhunwala, Varma, and Maguluri]{khodadadian2021linear}
Sajad Khodadadian, Prakirt~Raj Jhunjhunwala, Sushil~Mahavir Varma, and Siva~Theja Maguluri.
\newblock On the linear convergence of natural policy gradient algorithm.
\newblock \emph{arXiv preprint arXiv:2105.01424}, 2021.

\bibitem[Konda \& Tsitsiklis(2003)Konda and Tsitsiklis]{konda2003onactor}
Vijay~R Konda and John~N Tsitsiklis.
\newblock On actor-critic algorithms.
\newblock \emph{SIAM journal on Control and Optimization}, 42\penalty0 (4):\penalty0 1143--1166, 2003.

\bibitem[Krishnamurthy(2016)]{krishnamurthy2016partially}
Vikram Krishnamurthy.
\newblock \emph{Partially observed Markov decision processes}.
\newblock Cambridge university press, 2016.

\bibitem[Li et~al.(2018)Li, Li, Cook, Zhu, and Gao]{LiEtAl2018}
Shuai Li, Wanqing Li, Chris Cook, Ce~Zhu, and Yanbo Gao.
\newblock {Independently Recurrent Neural Network (IndRNN): Building A Longer and Deeper RNN}.
\newblock In \emph{Proceedings of the IEEE/CVF Conference on Computer Vision and Pattern Recognition (CVPR)}, pp.\  5457--5466, 2018.

\bibitem[Li et~al.(2019)Li, Li, Cook, Gao, and Zhu]{LiEtAl2019}
Shuai Li, Wanqing Li, Chris Cook, Yanbo Gao, and Ce~Zhu.
\newblock {Deep Independently Recurrent Neural Network (IndRNN)}.
\newblock \emph{arXiv preprint arXiv:1910.06251}, 2019.

\bibitem[Lin \& Mitchell(1993)Lin and Mitchell]{lin1993reinforcement}
Long-Ji Lin and Tom~M. Mitchell.
\newblock \emph{Reinforcement Learning With Hidden States}, pp.\  269–278.
\newblock The MIT Press, April 1993.
\newblock ISBN 9780262287159.
\newblock \doi{10.7551/mitpress/3116.003.0038}.
\newblock URL \url{http://dx.doi.org/10.7551/mitpress/3116.003.0038}.

\bibitem[Liu et~al.(2019)Liu, Cai, Yang, and Wang]{liu2019neural}
Boyi Liu, Qi~Cai, Zhuoran Yang, and Zhaoran Wang.
\newblock Neural proximal/trust region policy optimization attains globally optimal policy.
\newblock \emph{arXiv preprint arXiv:1906.10306}, 2019.

\bibitem[Liu et~al.(2022)Liu, Chung, Szepesv{\'a}ri, and Jin]{liu2022partially}
Qinghua Liu, Alan Chung, Csaba Szepesv{\'a}ri, and Chi Jin.
\newblock When is partially observable reinforcement learning not scary?
\newblock In \emph{Conference on Learning Theory}, pp.\  5175--5220. PMLR, 2022.

\bibitem[Liu et~al.(2020)Liu, Zhang, Basar, and Yin]{liu2020improved}
Yanli Liu, Kaiqing Zhang, Tamer Basar, and Wotao Yin.
\newblock An improved analysis of (variance-reduced) policy gradient and natural policy gradient methods.
\newblock \emph{Advances in Neural Information Processing Systems}, 33:\penalty0 7624--7636, 2020.

\bibitem[Lu et~al.(2024)Lu, Shi, Liu, Hu, Du, and Xu]{lu2024rethinking}
Chenhao Lu, Ruizhe Shi, Yuyao Liu, Kaizhe Hu, Simon~S Du, and Huazhe Xu.
\newblock Rethinking transformers in solving pomdps.
\newblock \emph{arXiv preprint arXiv:2405.17358}, 2024.

\bibitem[Martens \& Sutskever(2011)Martens and Sutskever]{martens2011learning}
James Martens and Ilya Sutskever.
\newblock Learning recurrent neural networks with hessian-free optimization.
\newblock In \emph{Proceedings of the 28th international conference on machine learning (ICML-11)}, pp.\  1033--1040, 2011.

\bibitem[Mnih et~al.(2014)Mnih, Heess, Graves, et~al.]{mnih2014recurrent}
Volodymyr Mnih, Nicolas Heess, Alex Graves, et~al.
\newblock Recurrent models of visual attention.
\newblock \emph{Advances in neural information processing systems}, 27, 2014.

\bibitem[Mohri et~al.(2018)Mohri, Rostamizadeh, and Talwalkar]{mohri2018foundations}
Mehryar Mohri, Afshin Rostamizadeh, and Ameet Talwalkar.
\newblock \emph{Foundations of machine learning}.
\newblock MIT press, 2018.

\bibitem[Morad et~al.(2023)Morad, Kortvelesy, Bettini, Liwicki, and Prorok]{morad2023popgym}
Steven Morad, Ryan Kortvelesy, Matteo Bettini, Stephan Liwicki, and Amanda Prorok.
\newblock {POPGym: Benchmarking Partially Observable Reinforcement Learning}.
\newblock In \emph{International Conference on Learning Representations (ICLR) Workshops}, 2023.
\newblock URL \url{https://openreview.net/forum?id=chDrutUTs0K}.

\bibitem[Murphy(2000)]{murphy2000survey}
Kevin~P Murphy.
\newblock A survey of pomdp solution techniques.
\newblock \emph{environment}, 2\penalty0 (10), 2000.

\bibitem[Ni et~al.(2021)Ni, Eysenbach, and Salakhutdinov]{ni2021recurrent}
Tianwei Ni, Benjamin Eysenbach, and Ruslan Salakhutdinov.
\newblock Recurrent model-free rl can be a strong baseline for many pomdps.
\newblock \emph{arXiv preprint arXiv:2110.05038}, 2021.

\bibitem[Oymak \& Soltanolkotabi(2020)Oymak and Soltanolkotabi]{oymak2020toward}
Samet Oymak and Mahdi Soltanolkotabi.
\newblock Toward moderate overparameterization: Global convergence guarantees for training shallow neural networks.
\newblock \emph{IEEE Journal on Selected Areas in Information Theory}, 1\penalty0 (1):\penalty0 84--105, 2020.

\bibitem[Shalev-Shwartz \& Ben-David(2014)Shalev-Shwartz and Ben-David]{shalev2014understanding}
Shai Shalev-Shwartz and Shai Ben-David.
\newblock \emph{Understanding machine learning: From theory to algorithms}.
\newblock Cambridge university press, 2014.

\bibitem[Singh et~al.(1994)Singh, Jaakkola, and Jordan]{singh1994learning}
Satinder~P Singh, Tommi Jaakkola, and Michael~I Jordan.
\newblock Learning without state-estimation in partially observable markovian decision processes.
\newblock In \emph{Machine Learning Proceedings 1994}, pp.\  284--292. Elsevier, 1994.

\bibitem[Srebro et~al.(2004)Srebro, Rennie, and Jaakkola]{srebro2004maximum}
Nathan Srebro, Jason Rennie, and Tommi Jaakkola.
\newblock Maximum-margin matrix factorization.
\newblock \emph{Advances in neural information processing systems}, 17, 2004.

\bibitem[Srivastava et~al.(2014)Srivastava, Hinton, Krizhevsky, Sutskever, and Salakhutdinov]{srivastava2014dropout}
Nitish Srivastava, Geoffrey Hinton, Alex Krizhevsky, Ilya Sutskever, and Ruslan Salakhutdinov.
\newblock Dropout: a simple way to prevent neural networks from overfitting.
\newblock \emph{The journal of machine learning research}, 15\penalty0 (1):\penalty0 1929--1958, 2014.

\bibitem[Sutton(1988)]{sutton1988learning}
Richard~S Sutton.
\newblock Learning to predict by the methods of temporal differences.
\newblock \emph{Machine learning}, 3:\penalty0 9--44, 1988.

\bibitem[Telgarsky(2021)]{mjt_dlt}
Matus Telgarsky.
\newblock Deep learning theory lecture notes.
\newblock \url{https://mjt.cs.illinois.edu/dlt/}, 2021.
\newblock Version: 2021-10-27 v0.0-e7150f2d (alpha).

\bibitem[Uehara et~al.(2022)Uehara, Sekhari, Lee, Kallus, and Sun]{uehara2022provably}
Masatoshi Uehara, Ayush Sekhari, Jason~D Lee, Nathan Kallus, and Wen Sun.
\newblock Provably efficient reinforcement learning in partially observable dynamical systems.
\newblock \emph{Advances in Neural Information Processing Systems}, 35:\penalty0 578--592, 2022.

\bibitem[Uehara et~al.(2023)Uehara, Sekhari, Lee, Kallus, and Sun]{uehara2023computationally}
Masatoshi Uehara, Ayush Sekhari, Jason~D Lee, Nathan Kallus, and Wen Sun.
\newblock Computationally efficient pac rl in pomdps with latent determinism and conditional embeddings.
\newblock In \emph{International Conference on Machine Learning}, pp.\  34615--34641. PMLR, 2023.

\bibitem[Wang et~al.(2019)Wang, Cai, Yang, and Wang]{wang2019neural}
Lingxiao Wang, Qi~Cai, Zhuoran Yang, and Zhaoran Wang.
\newblock Neural policy gradient methods: Global optimality and rates of convergence.
\newblock \emph{arXiv preprint arXiv:1909.01150}, 2019.

\bibitem[Whitehead \& Lin(1995)Whitehead and Lin]{whitehead1995reinforcement}
Steven~D Whitehead and Long-Ji Lin.
\newblock Reinforcement learning of non-markov decision processes.
\newblock \emph{Artificial intelligence}, 73\penalty0 (1-2):\penalty0 271--306, 1995.

\bibitem[Wierstra et~al.(2010)Wierstra, F{\"o}rster, Peters, and Schmidhuber]{wierstra2010recurrent}
Daan Wierstra, Alexander F{\"o}rster, Jan Peters, and J{\"u}rgen Schmidhuber.
\newblock Recurrent policy gradients.
\newblock \emph{Logic Journal of IGPL}, 18\penalty0 (5):\penalty0 620--634, 2010.

\bibitem[Yu(2012)]{yu2012function}
Huizhen Yu.
\newblock A function approximation approach to estimation of policy gradient for pomdp with structured policies.
\newblock \emph{arXiv preprint arXiv:1207.1421}, 2012.

\bibitem[Yu \& Bertsekas(2008)Yu and Bertsekas]{yu2008near}
Huizhen Yu and Dimitri~P Bertsekas.
\newblock On near optimality of the set of finite-state controllers for average cost pomdp.
\newblock \emph{Mathematics of Operations Research}, 33\penalty0 (1):\penalty0 1--11, 2008.

\bibitem[Zucchet \& Orvieto(2024)Zucchet and Orvieto]{zucchet2024recurrent}
Nicolas Zucchet and Antonio Orvieto.
\newblock Recurrent neural networks: vanishing and exploding gradients are not the end of the story.
\newblock \emph{Advances in Neural Information Processing Systems}, 37:\penalty0 139402--139443, 2024.

\end{thebibliography}
\bibliographystyle{tmlr}

\appendix
\section{Algorithmic Tools for Recurrent Neural Networks}\label{sec:algorithmic-details}

\subsection{Max-Norm Projection for Recurrent Neural Networks}
Max-norm regularization, proposed by \cite{srebro2004maximum}, has been shown to be very effective across a broad spectrum of deep learning problems \citep{srivastava2014dropout, goodfellow2013maxout}. In this work, we incorporate max-norm regularization (around the random initialization) into the recurrent natural policy gradient for sharp convergence guarantees. To that end, given an initialization $(\W(0),\U(0),c)$ as in Definition \ref{def:sym-initialization} and a vector $\rho=(\rho_\msf{w},\rho_\msf{u})^\top\in\bR_\gz^2$ of projection radii, we define the compactly-supported set of weights $\Omega_{\rho,m}\subset\bR^{m(d+1)}$ as
\begin{equation}
    \Omega_{\rho,m} = \Big\{\Theta\in\bR^{m(d+1)}:\max_i|W_{ii}-W_{ii}(0)|\leq \frac{\rho_\msf{w}}{\sqrt{m}},~\max_i\|U_i-U_i(0)\|\leq \frac{\rho_\msf{u}}{\sqrt{m}}\Big\}.
\end{equation}
Given any symmetric random initialization $(\W(0),\U(0),c)$ and $\rho\in\bR_\gz^2$, the set $\Omega_{\rho,m}$ is a compact and convex subset of $\bR^{m(d+1)}$, and for any $\Theta\in\Omega_{\rho,m}$, we have
\begin{align*}
    \max_{1\leq i\leq m}|\Wii-\Wii(0)|&\leq \frac{{\rho_\msf{w}}}{\sqrt{m}},\\
    \max_{1\leq i\leq m}\|U_{i}-U_i(0)\|&\leq \frac{\rho_\msf{u}}{\sqrt{m}}.
\end{align*}
Let 
\begin{equation}
    \mathbf{Proj}_{\Omega_{\rho,m}}[\Theta]=\LL[\underset{w\in \mathcal{B}_2\LL(\Wii(0),\frac{\rho_\msf{w}}{\sqrt{m}}\RR)}{\arg\min}~|\Wii-w_i|,\underset{u_i\in \mathcal{B}_2\LL(U_i(0),\frac{\rho_\msf{u}}{\sqrt{m}}\RR)}{\arg\min}~\|\U_i-u_i\|_2\RR]_{i\in[m]}
\end{equation}
As such, the projection operator $\mathbf{Proj}_{\Omega_{\rho,m}}[\cdot]$ onto $\Omega_{\rho,m}$ is called the max-norm projection (or regularization).

Note that we have $\|\W-\W(0)\|_2\leq \rho_\msf{w}$, $\|\U-\U(0)\|_2 \leq \rho_\msf{u}$ and $\|\Theta-\Theta(0)\|_2\leq \|\rho\|_2$ in the $\ell_2$ geometry for any $\Theta\in\Omega_{\rho,m}$. Therefore, although the max-norm parameter class $\Omega_{\rho,m}\subset \{\Theta\in\bR^{m(d+1)}:\|\Theta-\Theta(0)\|_2\leq \|\rho\|_2\}$, the $\ell_2$-projected \cite{cai2019neural, wang2019neural, liu2019neural} and max-norm projected \cite{cayci2024finitetime} optimization algorithms recover exactly the same function class (i.e., RKHS associated with the neural tangent kernel studied in \cite{ji2019neural, mjt_dlt}, see Section \ref{sec:ntk}).

\section{Proofs for Section \ref{sec:rec-td}}\label{sec:analysis-rec-td}
An important quantity in the analysis of recurrent neural networks is the following:
\begin{equation*}
    \Gamma_t\ui{i}(\z_t;\Theta):=\Wii H_{t}^\uind{i}(\z_t;\Theta),
\end{equation*}
for any hidden unit $i\in[m]$ and $\Theta\in\bR^{m(d+1)}$. The following Lipschitzness and smoothness results for $\Theta_i\mapsto H_t\ui{i}(\z_t;\Theta)$ and $\Theta_i\mapsto\Gamma_t\ui{i}(\z_t;\Theta)$.
\begin{lemma}[Local continuity of hidden states; Lemma 1-2 in \cite{cayci2024convergence}]
    Given $\rho\in\bR_\gz^2$ and $\alpha\geq 0$, let $\alpha_m=\alpha+\frac{\rho_w}{\sqrt{m}}$. Then, for any $\bm{\z}\in(\bY\times\bA)^{\Z_+}$ with $\sup_{t\in\bN}\LL\|\begin{pmatrix}y_t\\a_t\end{pmatrix}\RR\|_2\leq 1$, $t\in\bN$ and $i\in[m]$,
    \begin{itemize}
        \item $\Theta_i\mapsto H_t\ui{i}(\z_t;\Theta)$ is $L_t$-Lipschitz continuous with $L_t=(\varrho_0^2+1)\varrho_1^2\cdot p_t^2(\alpha_m\varrho_1)$,
        \item $\Theta_i\mapsto H_t\ui{i}(\z_t;\Theta)$ is $\beta_t$-smooth with $\beta_t=\cO\LL(d\cdot p_t(\alpha_m\varrho_1)\cdot q_t(\alpha_m\varrho_1)\RR)$,
        \item $\Theta_i\mapsto\Gamma_t\ui{i}(\z_t;\Theta)$ is $\Lambda_t$-Lipschitz with $\Lambda_t=\sqrt{2}(\varrho_0+1+\alpha_m L_t)$,
        \item $\Theta_i\mapsto \Gamma_t\ui{i}(\z_t;\Theta)$ is $\chi_t$-smooth with $\chi_t=\sqrt{2}(L_t+\alpha_m\beta_t)$,
    \end{itemize}
    in $\Omega_{\rho,m}$. Consequently, for any $\Theta\in\Omega_{\rho,m}$,
    \begin{align}
        \label{eqn:norm-bound}\sup_{\bm{\z}\in\bar{\bH}_\infty}\max_{0\leq t\leq T}|F_t(\z_t;\Theta)| &\leq L_T\cdot\|\rho\|_2,~T\in\bN,\\
        \label{eqn:linearization-error}\sup_{\bm{\z}\in\bar{\bH}_\infty}|F_t^\msf{Lin}(\z_t;\Theta)-F_t(\z_t;\Theta)| &\leq \frac{2}{\sqrt{m}}(\varrho_2\Lambda_t^2+\varrho_1\chi_t)\|\Theta-\Theta(0)\|_2^2,~t\in\bN,\\
        \sup_{\bm{\z}\in\bar{\bH}_\infty}\LL\langle \nabla F_t(\z_t;\Theta)-\nabla F_t(\z_t;\Theta(0)),\Theta-\bar{\Theta}\RR\rangle &\leq \frac{2\beta_t^2\|\rho\|_2^2}{\sqrt{m}},
    \end{align}
    with probability 1 over the symmetric random initialization $(\W(0),\U(0),c)\sim\zeta_0$.
    \label{lemma:prelim}
\end{lemma}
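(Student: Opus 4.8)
The plan is to exploit the defining feature of the IndRNN, namely that $\W$ is diagonal, so that each hidden state $H_t\ui{i}(\z_t;\Theta)$ depends only on its own parameter block $\Theta_i=(W_{ii},U_i)=:\theta=(w,u)$. This reduces every claim to the scalar recursion $h_t(\theta)=\varrho(s_t)$ with preactivation $s_t = w\,h_{t-1}(\theta)+\langle u,x_t\rangle$, and I would prove all four regularity statements by a single simultaneous induction on $t$, then assemble the three consequences by summing over the $m$ units. Throughout I would use the $\Omega_{\rho,m}$ bound $\|\Theta_i-\Theta_i(0)\|_2\le\|\rho\|_2/\sqrt m$ together with $|\varrho|\le\varrho_0$, $|\varrho'|\le\varrho_1$, $|\varrho''|\le\varrho_2$, $\|x_t\|_2\le1$, and $|w|\le\alpha_m$.

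For the Lipschitz bound I would differentiate once. With $\cI_t=\varrho'(s_t)$ and $\nabla_\theta s_t=(h_{t-1}+w\,\partial_w h_{t-1},\,x_t+w\,\nabla_u h_{t-1})$, the chain rule gives $\nabla_\theta h_t=\cI_t\,\nabla_\theta s_t$, which unrolls into exactly the neural tangent random feature $\psi_t$ recorded earlier in the paper. I would then either bound $\|\psi_t\|_2$ directly or, more cleanly, establish the scalar recursion $L_t\le \varrho_1\alpha_m L_{t-1}+\varrho_1\sqrt{\varrho_0^2+1}$ for the Lipschitz constant of $\theta\mapsto h_t$ (the inhomogeneous term coming from $|h_{t-1}|\le\varrho_0$ and $\|x_t\|_2\le1$, the recursive factor from $|w\varrho'|\le\alpha_m\varrho_1$); solving this geometric recursion produces the partial sum $p_t(\alpha_m\varrho_1)$ and hence the stated $L_t$.

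The crux, and the step I expect to be the main obstacle, is the smoothness bound $\beta_t$. The key device is the auxiliary quantity $\Gamma_{t}\ui{i}=W_{ii}H_t\ui{i}$: since $s_t=\Gamma_{t-1}+\langle u,x_t\rangle$, one has $\nabla s_t=\nabla\Gamma_{t-1}+(0,x_t)$ and $\nabla^2 s_t=\nabla^2\Gamma_{t-1}$, so the first and second derivatives of the preactivation are controlled exactly by the Lipschitz constant $\Lambda_{t-1}$ and smoothness $\chi_{t-1}$ of $\Gamma$. Differentiating $h_t=\varrho(s_t)$ twice yields $\nabla^2 h_t=\varrho''(s_t)\,\nabla s_t\,\nabla s_t^\top+\varrho'(s_t)\,\nabla^2 s_t$, giving the clean recursion $\beta_t\le\varrho_2(\Lambda_{t-1}+1)^2+\varrho_1\chi_{t-1}$; meanwhile $\Lambda_t=\sqrt2(\varrho_0+1+\alpha_m L_t)$ and $\chi_t=\sqrt2(L_t+\alpha_m\beta_t)$ follow immediately from the product rule $\nabla\Gamma_t=h_t\,e_1+w\,\nabla h_t$ (with $e_1$ the $w$-coordinate unit vector) and the already-proved bounds on $h_t,\nabla h_t,\nabla^2 h_t$. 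Substituting $\chi_{t-1}$ makes the recursive dependence $\beta_t\le \sqrt2\varrho_1\alpha_m\,\beta_{t-1}+\cO\!\big(\varrho_2\Lambda_{t-1}^2+\varrho_1 L_{t-1}\big)$ explicit, and the delicate point is that the forcing term itself grows in $t$: convolving the geometric kernel $(\alpha_m\varrho_1)^k$ against this lengthening forcing replaces a plain partial sum by the $(k+1)$-weighted sum $q_t(\alpha_m\varrho_1)$, yielding $\beta_t=\cO(d\,p_t q_t)$, with the dimension factor $d$ entering through the $u\in\bR^d$ block when passing from the blockwise estimates to the operator norm. Careful bookkeeping of the three Hessian blocks ($ww$, $wu$, $uu$) is what makes this laborious rather than conceptually hard.

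Finally I would read off the three consequences. Symmetric initialization gives $F_t(\z_t;\Theta(0))=0$, so $|F_t(\z_t;\Theta)|=|F_t(\z_t;\Theta)-F_t(\z_t;\Theta(0))|\le \tfrac{1}{\sqrt m}\sum_i|H_t\ui{i}(\Theta)-H_t\ui{i}(\Theta(0))|\le \tfrac{1}{\sqrt m}\cdot m\cdot L_t\cdot\tfrac{\|\rho\|_2}{\sqrt m}=L_t\|\rho\|_2$, using $|c_i|=1$; monotonicity of $L_t$ then gives the $\max_{t\le T}$ bound with $L_T$. The linearization error is the second-order Taylor remainder summed over units: bounding each per-unit Hessian by $\varrho_2\Lambda_t^2+\varrho_1\chi_t$ (via $\nabla s_t,\nabla^2 s_t$ as above) and using $\sum_i\|\Theta_i-\Theta_i(0)\|_2^2=\|\Theta-\Theta(0)\|_2^2$ yields the claimed $\tfrac{2}{\sqrt m}(\varrho_2\Lambda_t^2+\varrho_1\chi_t)\|\Theta-\Theta(0)\|_2^2$. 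The last inequality follows from per-unit $\beta_t$-smoothness of $\theta\mapsto\nabla h_t$ (so that the $i$-block of $\nabla F_t(\Theta)-\nabla F_t(\Theta(0))$ has norm at most $\tfrac{1}{\sqrt m}\beta_t\|\Theta_i-\Theta_i(0)\|_2$), combined with a blockwise Cauchy--Schwarz against $\Theta-\bar\Theta$ and the radius bound $\|\Theta_i-\Theta_i(0)\|_2\le\|\rho\|_2/\sqrt m$; summing the $m$ blocks produces the stated $\|\rho\|_2^2/\sqrt m$ scaling.
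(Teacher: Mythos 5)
The paper does not actually prove this lemma: it is imported verbatim as Lemmas 1--2 of \cite{cayci2024convergence}, and no proof appears in the appendix, so there is no in-paper argument to compare against. That said, your plan is the right one and is essentially the standard argument behind the cited result: the diagonal $\W$ decouples the units so everything reduces to the scalar recursion $h_t=\varrho(w h_{t-1}+\langle u,x_t\rangle)$; the four constants are obtained by a simultaneous induction in the order $L_t$ from $L_{t-1}$, $\Lambda_t$ from $L_t$, $\beta_t$ from $(\Lambda_{t-1},\chi_{t-1})$ via $\nabla^2 h_t=\varrho''(s_t)\nabla s_t\nabla s_t^\top+\varrho'(s_t)\nabla^2\Gamma_{t-1}$, and $\chi_t$ from $(L_t,\beta_t)$; and the three displayed consequences follow by summing per-unit bounds using $\|\Theta_i-\Theta_i(0)\|_2\le\|\rho\|_2/\sqrt{m}$, $|c_i|=1$, and the cancellation $F_t(\cdot;\Theta(0))=0$ from the symmetric initialization. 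Two minor points, neither of which is a substantive gap. First, your one-step recursion delivers the Lipschitz constant $\varrho_1\sqrt{\varrho_0^2+1}\,p_t(\alpha_m\varrho_1)$, which is the square root of the stated $L_t=(\varrho_0^2+1)\varrho_1^2 p_t^2(\alpha_m\varrho_1)$; the lemma's constant is simply looser, but since $\Lambda_t$ and $\chi_t$ are defined in terms of $L_t$ you should fix one normalization and carry it consistently through the induction (and note that the looser $L_t$ dominates your bound only when $\varrho_1\sqrt{\varrho_0^2+1}\,p_t(\alpha_m\varrho_1)\ge 1$). Second, the last consequence pairs $\nabla F_t(\z_t;\Theta)-\nabla F_t(\z_t;\Theta(0))$ against $\Theta-\bar{\Theta}$ rather than $\Theta-\Theta(0)$, so your blockwise Cauchy--Schwarz additionally needs $\|\bar{\Theta}_i-\Theta_i(0)\|_2\le\|\nu\|_2/\sqrt{m}\le\|\rho\|_2/\sqrt{m}$, which is immediate from the definition of $\bar{\Theta}$ in the approximation lemma together with $\rho\succeq\nu$, but should be stated.
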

The following result builds on Proposition 3.8 in \cite{cayci2024convergence}, and identifies the approximation error for approximating $f^\star\in\cF$ by using randomly-initialized IndRNNs of width $m$. Unlike the supervised learning setting in \cite{cayci2024convergence}, the approximation error in the RL setting is $P_T^{\mu,\pi}$-norm.
\begin{lemma}[Approximation error between \textsf{RNN-NTRF} and \textsf{RNN-NTK}]
    Let $f^\star \in \cF$ with the transportation mapping $\bv\in\cH$, and let
    \begin{align}
        \bar{\Theta}_i=\Theta_i(0)+\frac{1}{\sqrt{m}}c_i\bv(\Theta_i(0)),i\in[m].
        \label{eqn:bar-theta}
    \end{align}
    for the initialization $(\W(0),\U(0),c)\sim\zeta_0$ in Def. \ref{def:sym-initialization}. Let $$F_t^\msf{Lin}(\cdot;\Theta)=\nabla_\Theta F_t(\cdot;\Theta(0))\cdot (\Theta-\Theta(0)).$$ If $P_T^{\pi,\mu}$ induces a compactly-supported marginal distribution for $X_t,t\in\bN$ such that $\|X_t\|_2\leq 1$ a.s. and $\{\Z_t:t\in\bN\}$ is independent from the random initialization $(\W(0),\U(0),c)$, then we have
    \begin{equation}
        \bE\LL[\bE_\mu^\pi\LL[\LL(f_t^\star(\Z_t)-F_t^\msf{Lin}(\Z_t;\bar{\Theta})\RR)^2\RR]\RR]\leq \frac{2\|\nu\|_2^2(1+\varrho_0^2)p_t^2(\alpha\varrho_1)}{m},
    \end{equation}
    where the outer expectation is with respect to the random initialization $(\W(0),\U(0),c)\sim\zeta_0$. 
    \label{lemma:approx}
\end{lemma}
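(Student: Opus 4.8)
The plan is to recognize that, at the transported parameter $\bar{\Theta}$, the linearized network $F_t^\msf{Lin}(\z_t;\bar{\Theta})$ is an \emph{unbiased} Monte-Carlo estimate of the target $f_t^\star(\z_t)$ built from $m$ hidden units, so that the left-hand side is a variance decaying like $1/m$. Concretely, because the readout is frozen we have $\nabla_{\Theta_i}F_t(\z_t;\Theta(0))=\frac{1}{\sqrt m}c_i\,\nabla_{\Theta_i}H_t\ui{i}(\z_t;\Theta(0))$, and since each unit of the IndRNN evolves through the scalar recursion (\ref{eqn:hidden-state}) independently, $H_t\ui{i}(\z_t;\Theta(0))$ coincides with $h_t(\z_t;\Theta_i(0))$. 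Differentiating that scalar recursion and unrolling the resulting first-order recursion for $\nabla_{\theta_0}h_t$ returns exactly the series defining $\psi_t$, i.e.\ $\nabla_{\Theta_i}H_t\ui{i}(\z_t;\Theta(0))=\psi_t(\z_t;\Theta_i(0))$. Substituting $\bar{\Theta}_i-\Theta_i(0)=\frac{1}{\sqrt m}c_i\bv(\Theta_i(0))$ into $F_t^\msf{Lin}=\nabla_\Theta^\top F_t(\z_t;\Theta(0))(\bar{\Theta}-\Theta(0))$ and using $c_i^2=1$ collapses the readout signs and gives
\[
F_t^\msf{Lin}(\z_t;\bar{\Theta})=\frac{1}{m}\sum_{i=1}^m\psi_t^\top(\z_t;\Theta_i(0))\,\bv(\Theta_i(0)).
\]

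Next I would exploit two properties of the symmetric initialization $\zeta_0$. First, marginally each $\Theta_i(0)$ is distributed as $\theta_0=(w_0,u_0)$ with $w_0\sim\msf{Rad}(\alpha)$ and $u_0\sim\cN(0,I_d)$; since $\{\Z_t\}$ is independent of the initialization, I may fix $\z_t$ and take the expectation over $\zeta_0$, obtaining $\bE[F_t^\msf{Lin}(\z_t;\bar{\Theta})]=\bE[\psi_t^\top(\z_t;\theta_0)\bv(\theta_0)]=f_t^\star(\z_t)$. Thus the estimator is unbiased and the inner squared error equals $\Var\big(F_t^\msf{Lin}(\z_t;\bar{\Theta})\big)$. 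Second, the pairing $\Theta_{i+m/2}(0)=\Theta_i(0)$ makes the $m$ summands identical in pairs (the sign flip $c_{i+m/2}=-c_i$ is annihilated by the square), so the average over $m$ units is really an average over the $m/2$ i.i.d.\ copies $\xi_i:=\psi_t^\top(\z_t;\Theta_i(0))\bv(\Theta_i(0))$. Hence $\Var\big(F_t^\msf{Lin}(\z_t;\bar{\Theta})\big)=\frac{2}{m}\Var(\xi_1)\leq\frac{2}{m}\bE[\xi_1^2]$, which is precisely where the factor $2/m$ in the claimed bound originates.

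It then remains to bound the per-unit second moment uniformly in $\z_t$. By Cauchy--Schwarz and the uniform control of the transportation map, $\bE[\xi_1^2]\leq\|\nu\|_2^2\,\bE\,\|\psi_t(\z_t;\theta_0)\|_2^2$, using $\|\bv(\theta_0)\|_2^2=v_\msf{w}(w_0)^2+\|v_\msf{u}(u_0)\|_2^2\leq\nu_\msf{w}^2+\nu_\msf{u}^2=\|\nu\|_2^2$. For $\|\psi_t\|_2$ I would insert the pointwise bounds $|h_s|\leq\varrho_0$, $\|x_s\|_2\leq1$, $|\cI_s|\leq\varrho_1$ and $|w_0|=\alpha$ into each summand of $\psi_t$ and sum the resulting geometric majorant, which yields $\|\psi_t(\z_t;\theta_0)\|_2^2\leq(1+\varrho_0^2)\,p_t^2(\alpha\varrho_1)$ uniformly over $\z_t$ with $\|x_s\|_2\leq1$ and over $\theta_0$ (the powers of $\varrho_1$ from the product of derivatives $\prod_j\cI_{t-j}$ being absorbed into the argument of $p_t$). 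Combining the three estimates gives $\Var(F_t^\msf{Lin}(\z_t;\bar{\Theta}))\leq\frac{2\|\nu\|_2^2(1+\varrho_0^2)p_t^2(\alpha\varrho_1)}{m}$ for every $\z_t$; since the bound is uniform, taking $\bE_\mu^\pi$ over $\Z_t\sim P_T^{\pi,\mu}$ and interchanging it with the initialization expectation (valid by the assumed independence) yields the statement.

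The main obstacle is the two recurrence-specific ingredients: the identity $\nabla_{\Theta_i}H_t\ui{i}(\z_t;\Theta(0))=\psi_t(\z_t;\Theta_i(0))$, obtained by unrolling the differentiated hidden-state recursion, and the uniform geometric bound on $\|\psi_t\|_2$. These encode the temporal dependence and are exactly where the polynomial $p_t(\alpha\varrho_1)$---and hence the sensitivity to long-term dependencies through the spectral scale $\alpha\varrho_1$---enters, in contrast to the feedforward case ($T=1$) where the feature is a single non-recurrent gradient. Once this structure is in place, the probabilistic core (unbiasedness plus the variance of an i.i.d.\ average, sharpened by the symmetric-initialization pairing) and the passage to the $P_T^{\pi,\mu}$-norm are routine.
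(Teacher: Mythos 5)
Your proposal follows essentially the same route as the paper's proof: identify $F_t^{\msf{Lin}}(\z_t;\bar{\Theta})$ as the empirical mean of the per-unit terms $\psi_t^\top(\z_t;\Theta_i(0))\bv(\Theta_i(0))$, use unbiasedness and the symmetric pairing $\Theta_{i+m/2}(0)=\Theta_i(0)$ to reduce the squared error to $\tfrac{2}{m}$ times a single-unit variance, and bound that variance via Cauchy--Schwarz together with the uniform bounds $|h_s|\leq\varrho_0$, $\|x_s\|_2\leq 1$, $|\cI_s|\leq\varrho_1$, $|w_0|=\alpha$. The only (immaterial) discrepancy is in constant bookkeeping: the product $\prod_{j=0}^{k}\cI_{t-j}$ contributes $\varrho_1^{k+1}$ against $\alpha^k$, leaving one factor of $\varrho_1$ outside $p_t(\alpha\varrho_1)$ rather than being fully ``absorbed'' as you claim --- a looseness the paper's own statement shares relative to its proof.
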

\begin{proof}
    For any hidden unit $i\in[m]$, let
    \begin{equation*}
        \zeta_i = \LL\langle\bv(\Theta_i(0)),\sum_{k=0}^{t}\Wii^k(0)\begin{pmatrix}
            H_{t-k-1}\ui{i}(\Z_{t-k-1},\Theta_i(0))\\X_{t-k}
        \end{pmatrix}\prod_{j=0}^k\cI_{t-j}(\Z_{t-j};\Theta_i(0))\RR\rangle.
    \end{equation*}
    Then, it is straightforward to see that
    \begin{equation}
        F_t^\msf{Lin}(\Z_t;\bar{\Theta})=\frac{1}{m}\sum_{i=1}^m\zeta_i,
    \end{equation}
    and $\bE[\zeta_i|\Z_t]=\E[f_t^\star(\Z_t)|\Z_t]$ almost surely. Note that $\{\zeta_i:i\in[m/2]\}$ is independent and identically distributed and $\zeta_i=\zeta_{i+m/2}$ for any $i\in[m/2]$. Also, with probability 1 we have
    \begin{align*}
        |\zeta_i| &\overset{(\spadesuit)}{\leq} \|\bv(\Theta_i(0))\|_2\cdot\LL\|\sum_{k=0}^{t}\Wii^k(0)\begin{pmatrix}
            H_{t-k-1}\ui{i}(\Z_{t-k-1},\Theta_i(0))\\X_{t-k}
        \end{pmatrix}\prod_{j=0}^k\cI_{t-j}(\Z_{t-j};\Theta_i(0))\RR\|_2,\\
        &\overset{(\clubsuit)}{\leq} \|\bv(\Theta_i(0))\|_2\sum_{k=0}^{t-1}\alpha^k\varrho_1^{k+1}\sqrt{1+\varrho_0^2},\\
        &\overset{(\diamondsuit)}{\leq} \|\nu\|_2\cdot\varrho_1\cdot\sqrt{1+\varrho_0^2}\cdot p_t(\alpha\varrho_1),
    \end{align*}
    where $(\spadesuit)$ follows from Cauchy-Schwarz inequality, $(\clubsuit)$ follows from the uniform bound $\sup_{z\in\bR}|\varrho(z)|\leq \varrho_1$ and almost-sure bounds $\|X_k\|_2\leq 1$ and $|\Wii(0)|\leq \alpha$, and $(\diamondsuit)$ follows from $\bv\in\cH_\nu$. From these bounds,
    \begin{equation}
        \Var(\zeta_i)\leq \bE[\bE_\mu^\pi[|\zeta_i|^2]] \leq \|\nu\|_2^2\varrho_1^2(1+\varrho_0)^2p_t^2(\alpha\varrho_1),~i\in[m].
        \label{eqn:var-bound}
    \end{equation}
    Therefore,
    \begin{align*}
        \bE\LL[\bE_\mu^\pi\LL[\LL(f_t^\star(\Z_t)-F_t^\msf{Lin}(\Z_t;\bar{\Theta})\RR)^2\RR]\RR]&=\bE_\mu^\pi\LL[\bE\LL[\LL|\frac{1}{m}\sum_{i=1}^m\LL(\zeta_i-\bE[\zeta_i|\Z_t]\RR)\RR|^2\RR]\RR],\\
        &= \bE_\mu^\pi\LL[\bE\LL[\LL|\frac{2}{m}\sum_{i=1}^{m/2}\LL(\zeta_i-\bE[\zeta_i|\Z_t]\RR)\RR|^2\RR]\RR],\\
        &= \frac{4}{m^2}\bE_\mu^\pi\sum_{i=1}^{m/2}\sum_{j=1}^{m/2}\bE\LL[\LL(\zeta_i-\bE[\zeta_i|\Z_t]\RR)\LL(\zeta_j-\bE[\zeta_j|\Z_t]\RR)\RR],\\
        &= \frac{4}{m^2}\bE_\mu^\pi\sum_{i=1}^{m/2}\Var(\zeta_i) \leq \frac{2}{m}\|\nu\|_2^2\varrho_1^2(1+\varrho_0)^2p_t^2(\alpha\varrho_1),
    \end{align*}
    where the first identity is from Fubini's theorem, the second identity is from the symmetricity of the random initialization, the fourth identity is due to the independent initialization for $i\leq m/2$, and the inequality is from the bound in \eqref{eqn:var-bound}.
    
\end{proof}

\begin{proposition}[Non-stationary Bellman equation]
    For $\pi\in\Pi_\msf{NM}$, we have
        \begin{align*}
            \cQ_t^\pi(\bar{z}_t) = \bE^\pi\LL[r(S_t,A_t)+\gamma\cQ_{t+1}^\pi(\bar{Z}_{t+1})\Big|\bar{Z}_t=\bar{z}_t\RR]= \bE^\pi\LL[r(S_t,A_t)+\gamma\cV_{t+1}^\pi(Z_{t+1})\Big|\bar{Z}_t=\bar{z}_t\RR],
        \end{align*}
        for any $t\in\bZ_+$.
    \label{prop:bellman}
\end{proposition}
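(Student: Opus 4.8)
The plan is to unfold the definition of $\cQ_t^\pi$ as an infinite discounted sum, peel off the time-$t$ reward, and fold the remaining tail back into $\cQ_{t+1}^\pi$ via the tower property of conditional expectation. First I would start from
$$\cQ_t^\pi(\bar z_t) = \bE^\pi\LL[\sum_{k=t}^\infty \gamma^{k-t} r(S_k,A_k)\,\Big|\,\bar Z_t=\bar z_t\RR],$$
and split off the $k=t$ summand, rewriting the tail as $\gamma\sum_{k=t+1}^\infty \gamma^{k-(t+1)}r(S_k,A_k)$ after reindexing. The interchange of expectation with the infinite sum, and the splitting itself, are justified by the uniform bound $|r(s,a)|\le r_\infty$ together with $\gamma\in(0,1)$, which makes the series absolutely convergent with a summable dominating envelope, so dominated convergence applies.

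The key step is to insert an intermediate conditioning on $\bar Z_{t+1}$. Since $\bar Z_{t+1}=(\bar Z_t,Y_{t+1},A_{t+1})$ extends $\bar Z_t$, the $\sigma$-algebra generated by $\bar Z_t$ is coarser than that generated by $\bar Z_{t+1}$, so the tower property gives
$$\bE^\pi\LL[\sum_{k=t+1}^\infty \gamma^{k-(t+1)}r(S_k,A_k)\,\Big|\,\bar Z_t\RR] = \bE^\pi\LL[\,\bE^\pi\LL[\sum_{k=t+1}^\infty \gamma^{k-(t+1)}r(S_k,A_k)\,\Big|\,\bar Z_{t+1}\RR]\,\Big|\,\bar Z_t\RR].$$
The inner conditional expectation is exactly $\cQ_{t+1}^\pi(\bar Z_{t+1})$ by definition, which, combined with the peeled-off reward term, yields the first equality. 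It is worth emphasizing that no Markov simplification is needed: in the POMDP the full history $\bar Z_{t+1}$ is precisely the conditioning variable in the definition of $\cQ_{t+1}^\pi$, so the latent state $S_{t+1}$ is integrated out automatically and the history-dependence of the non-stationary policy is fully respected.

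For the second equality, I would take the $\gamma\,\cQ_{t+1}^\pi(\bar Z_{t+1})$ term and condition one further level inside on $Z_{t+1}$ rather than on $\bar Z_{t+1}=(Z_{t+1},A_{t+1})$. Since the policy draws $A_{t+1}\sim\pi_{t+1}(\cdot\mid Z_{t+1})$ depending only on the observable history $Z_{t+1}$, averaging $\cQ_{t+1}^\pi(Z_{t+1},A_{t+1})$ over $A_{t+1}$ recovers the value function:
$$\bE^\pi\LL[\cQ_{t+1}^\pi(Z_{t+1},A_{t+1})\,\big|\,Z_{t+1}\RR] = \sum_{a\in\bA}\pi_{t+1}(a\mid Z_{t+1})\,\cQ_{t+1}^\pi(Z_{t+1},a) = \cV_{t+1}^\pi(Z_{t+1}).$$
A final application of the tower property, passing from $Z_{t+1}$ back to $\bar Z_t$, converts $\bE^\pi[\cQ_{t+1}^\pi(\bar Z_{t+1})\mid \bar Z_t=\bar z_t]$ into $\bE^\pi[\cV_{t+1}^\pi(Z_{t+1})\mid \bar Z_t=\bar z_t]$, completing the proof.

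The main obstacle—though it is more a matter of care than of depth—is the bookkeeping of nested conditionings in the non-stationary, partially observable setting: one must verify that each intermediate conditioning variable ($\bar Z_{t+1}$, then $Z_{t+1}$) generates a filtration coarser in the correct direction for the tower property, and that the definitions of $\cQ_{t+1}^\pi$ and $\cV_{t+1}^\pi$ match the respective conditioning $\sigma$-algebras exactly. The absolute-convergence argument needed to license the infinite-horizon manipulations is routine given bounded rewards and discounting.
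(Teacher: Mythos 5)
Your argument is correct: splitting off the time-$t$ reward, applying the tower property along the chain of $\sigma$-algebras $\sigma(\bar{Z}_t)\subseteq\sigma(Z_{t+1})\subseteq\sigma(\bar{Z}_{t+1})$, and recognizing the inner conditional expectations as $\cQ_{t+1}^\pi$ and $\cV_{t+1}^\pi$ by definition is exactly the right (and standard) route; the justification of the sum--expectation interchange via bounded rewards and $\gamma\in(0,1)$ is also in order. The paper states Proposition \ref{prop:bellman} without proof, so there is nothing to compare against, but your write-up supplies precisely the bookkeeping the authors leave implicit.
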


\begin{proof}[Proof of Theorem \ref{thm:rec-td}]
    Since $\{\cQ_t^\pi:t\in\bN\}\in\cF$, let the point of attraction $\bar{\Theta}$ be defined as in \eqref{eqn:bar-theta}, and the potential function be defined as
    \begin{equation}
        \Psi(\Theta)=\|\Theta-\bar{\Theta}\|_2^2.
    \end{equation}
    Then, from the non-expansivity of the projection operator onto the convex set $\Omega_{\rho,m}$, we have the following inequality:
    \begin{equation}
        \Psi(\Theta(k+1))\leq \Psi(\Theta(k)) + 2\eta\sum_{t=0}^{T-1}\gamma^t\delta_t(\Z_{t+1}^k;\Theta(k))\LL\langle\nabla F_t(\Z_t^k;\Theta(k)), \Theta(k)-\bar{\Theta}\RR\rangle+\eta^2\|\check{\nabla}\cR_T(\Z_T^k;\Theta(k))\|_2^2.
    \end{equation}
    Let $\check{\bE}_t^k[\cdot] := \bE[\cdot|\Theta(k),\ldots,\Theta(0),\Z_t^k]$. Then, we obtain
    \begin{multline}
        {\bE}[\Psi(\Theta(k+1))-\Psi(\Theta(k))] \leq  2\eta\bE\Big[\sum_{t=0}^{T-1}\gamma^t\underbrace{\check{\bE}_t^k[\delta_t(\Z_{t+1}^k;\Theta(k))]\LL\langle\nabla F_t(\Z_t^k;\Theta(k)), \Theta(k)-\bar{\Theta}\RR\rangle}_{(\spadesuit)_t}\Big]\\+\eta^2{\bE}\underbrace{\|\check{\nabla}{\cR}_T(\Z_T^k;\Theta(k))\|_2^2}_{(\clubsuit)}.
        \label{eqn:drift}
    \end{multline}

    \textbf{Bounding $\bE(\spadesuit)_t$.} By using the Bellman equation in the non-stationary setting (cf. Proposition \ref{prop:bellman}), notice that 
    \begin{align*}
        \check{\bE}_t^k\delta_t(\Z_{t+1}^k;\Theta(k))&=\check{\bE}_t^k[r_t^k+\gamma F_{t+1}(\Z_{t+1}^k;\Theta(k)]-F_t(\Z_t^k;\Theta(k)),\\
        &= \gamma\check{\bE}_t^k\LL[F_{t+1}(\Z_{t+1}^k;\Theta(k))-\cQ_{t+1}^\pi(\Z_{t+1}^k)\RR]+\cQ_t^\pi(\Z_t)-F_t(\Z_t^k;\Theta(k)).
    \end{align*}
    Secondly, we perform a change-of-feature as follows:
    \begin{align}
        \LL\langle\nabla F_t(\Z_t^k;\Theta(k)), \Theta(k)-\bar{\Theta}\RR\rangle &= \LL\langle\nabla F_t(\Z_t^k;\Theta(0)), \Theta(k)-\bar{\Theta}\RR\rangle+\msf{err}_{t,k}\ui{1},
    \end{align}
    where $$\msf{err}_{t,k}\ui{1}:=\LL\langle\nabla F_t(\Z_t^k;\Theta(k))-\nabla F_t(\Z_t^k;\Theta(0)), \Theta(k)-\bar{\Theta}\RR\rangle,~\mbox{and}~|\msf{err}_{t,k}\ui{1}|\leq \frac{2\beta_t^2\|\rho\|_2^2}{\sqrt{m}}\leq \frac{2\beta_T^2\|\rho\|_2^2}{\sqrt{m}},$$ by Lemma \ref{lemma:prelim}. Furthermore,
    \begin{align}
        \LL\langle\nabla F_t(\Z_t^k;\Theta(0)), \Theta(k)-\bar{\Theta}\RR\rangle &= F_t^{\msf{Lin}}(\Z_t^k;\Theta(k))-F_t^{\msf{Lin}}(\Z_t^k;\bar{\Theta}),\\&=
        F_t(\Z_t^k;\Theta(k))-\cQ_t^\pi(\Z_t^k)+\msf{err}_{t,k}\ui{2} + \msf{err}_{t,k}\ui{3}
    \end{align}
    where
    \begin{align*}
        \msf{err}_{t,k}\ui{2} &:= F_t^{\msf{Lin}}(\Z_t^k;\Theta(k))-F_t(\Z_t^k;\Theta(k)),\\
        \msf{err}_{t,k}\ui{3} &:= -F_t^{\msf{Lin}}(\Z_t^k;\bar{\Theta})+\cQ_t^\pi(\Z_t^k).
    \end{align*}
    Thus,
    \begin{multline*}
        (\spadesuit)_t = -(\cQ_t^\pi(\Z_t^k)-F_t(\Z_t^k;\Theta(k)))^2+ \gamma\check{\bE}_t^k\LL[F_{t+1}(\Z_{t+1}^k;\Theta(k))-\cQ_{t+1}^\pi(\Z_{t+1}^k)\RR]\cdot(\cQ_t^\pi(\Z_t^k)-F_t(\Z_t^k;\Theta(k)))\\+\check{\bE}_t^k\delta_t(\Z_{t+1}^k;\Theta(k))\sum_{j=1}^3\msf{err}_{t,k}\ui{j}.
    \end{multline*}
    By \eqref{eqn:norm-bound}, we have $$\sup_{\bm{\z}\in\bar{\bH}_\infty}|\delta_t(\z_{t+1};\Theta(k))|\leq r_\infty+2L_T\|\rho\|_2=:\delta_\msf{max}$$
    Now, let $\omega_{t,k}:=\LL(\bE[(\cQ_t^\pi(\Z_t^k)-F_t(\Z_t^k;\Theta(k)))^2]\RR)^{1/2}$, where the expectation is over the joint distribution of $\Theta(k)$ and $\Z_T^k$. Then,
    \begin{equation*}
        \bE[(\spadesuit)_t] \leq -\omega_{t,k}^2 + \gamma\omega_{t+1,k}\omega_{t,k}+\delta_\msf{max}\sum_{j=1}^3\bE|\msf{err}_{t,k}\ui{j}|.
    \end{equation*}
    From \eqref{eqn:linearization-error}, we have
    \begin{equation*}
        \bE|\msf{err}_{t,k}\ui{2}|\leq \frac{2}{\sqrt{m}}(\varrho_2 \Lambda_T^2+\varrho_1 \chi_T)\|\rho\|_2^2.
    \end{equation*}
    From the approximation bound in Lemma \ref{lemma:approx}, we get
    \begin{equation*}
        \bE|\msf{err}_{t,k}\ui{3}|\leq \sqrt{\bE|\msf{err}_{t,k}\ui{3}|^2}\leq \frac{2\|\nu\|_2\sqrt{1+\varrho_0^2}\cdot p_T(\alpha\varrho_1)}{\sqrt{m}}.
    \end{equation*}
    Also, note that $\omega_{t+1,k}\omega_{t,k}\leq \frac{1}{2}(\omega_{t,k}^2+\omega_{t+1,k}^2).$ Putting these together, we obtain the following bound for every $t\in\{0,1,\ldots,T-1\}$:
    \begin{equation*}
        \bE[(\spadesuit)_t]\leq -\omega_{t,k}^2+\frac{\gamma}{2}(\omega_{t+1,k}^2+\omega_{t,k}^2)+\delta_\msf{max}\cdot \frac{C_T}{\sqrt{m}},
    \end{equation*}
    where $$C_T := 2\beta_T^2\|\rho\|_2^2+ 2(\varrho_2 \Lambda_T^2+\varrho_1 \chi_T)\|\rho\|_2^2+2\|\nu\|_2\sqrt{1+\varrho_0^2}\cdot p_T(\alpha\varrho_1).$$ Hence, we obtain the following upper bound:
    \begin{align}
        \nonumber \sum_{t=0}^{T-1}\gamma^t\bE[(\spadesuit)_t]&\leq - (1-\gamma/2)\sum_{t<T}\gamma^t\omega_{t,k}^2 + \frac{\delta_\msf{max}\cdot C_T}{(1-\gamma)\sqrt{m}}+\underbrace{\frac{1}{2}\sum_{t<T}\gamma^{t+1}\omega_{t+1,k}^2}_{\leq \frac{1}{2}(\sum_{t<T}\gamma^t\omega_{t,k}^2+\gamma^T\omega_{T,k}^2)} \\ &\leq -\frac{1-\gamma}{2}\sum_{t<T}\gamma^t\omega_{t,k}^2+\frac{1}{2}\gamma^T\omega_{T,k}^2+\frac{C_T\cdot \delta_\msf{max}}{(1-\gamma)\sqrt{m}}.\label{eqn:neg-drift}
    \end{align}

    \textbf{Bounding $\bE[(\clubsuit)]$.} Using the triangle inequality, we obtain:
    \begin{equation*}
        \|\sum_{t<T}\gamma^t\delta_t(\Z_{t+1}^k;\Theta(k))\nabla F_t(\Z_t;\Theta(k))\|_2 \leq \sum_{t<T}\gamma^t|\delta_t(\Z_{t+1}^k;\Theta(k))|\cdot \|\nabla F_t(\Z_t;\Theta(k))\|_2.
    \end{equation*}
    Since $\Theta(k)\in\Omega_{\rho,m}$ for every $k\in\bN$ as a consequence of the max-norm regularization, we have
    \begin{align*}
        |\delta_t(\Z_{t+1}^k;\Theta(k))|&\leq \delta_\msf{max}=r_\infty+2L_T\|\rho\|_2,\\
        \|\nabla F_t(\Z_t^k;\Theta(k))\|_2^2&= \frac{1}{{m}}\sum_{i=1}^m\|\nabla_{\Theta_i}H_t\ui{i}(\Z_t^k;\Theta(k))\|_2^2\leq L_t^2 \leq L_T^2,
    \end{align*}
    for every $t<T$ with probability 1 since $\Theta_i\mapsto H_t\ui{i}(\z_t;\Theta_i)$ is $L_t$-Lipschitz continuous by Lemma \ref{lemma:prelim}. Hence, we obtain:
    \begin{equation}
        \|\check{\nabla}\cR_T(\Z_T^k;\Theta(k))\|_2\leq \frac{\delta_\msf{max}L_T}{1-\gamma}.
        \label{eqn:quadratic}
    \end{equation}

    \textbf{Final step.} Now, taking expectation over $(\Z_t^k,\Theta(k))$ in \eqref{eqn:drift}, and substituting \eqref{eqn:neg-drift} and \eqref{eqn:quadratic}, we obtain:
    \begin{equation*}
        \bE[\Psi(\Theta(k+1))-\Psi(\Theta(k))] \leq -\eta(1-\gamma)\sum_{t=0}^{T-1}\gamma^t\omega_{t,k}^2 +\eta\gamma^T\omega_{T,k}^2 + \eta \frac{\delta_\msf{max}\cdot C_T}{(1-\gamma)\sqrt{m}}+\eta^2\frac{\delta_\msf{max}^2L_T^2}{(1-\gamma)^2},
    \end{equation*}
    for every $k\in\bN$. Note that $\Psi(\Theta(0))\leq \|\nu\|_2^2$. Thus, telescoping sum over $k=0,1,\ldots,K-1$ yields
    \begin{equation}
        \frac{1}{K}\sum_{k=0}^{K-1}\cR_T(\Theta(k)) \leq \frac{\|\nu\|_2^2}{\eta(1-\gamma)K} + \frac{\eta\delta_\msf{max}^2L_T^2}{(1-\gamma)^3}+\frac{\delta_\msf{max}\cdot C_T}{(1-\gamma)^2\sqrt{m}}+\frac{\gamma^T}{(1-\gamma)K}\sum_{k=0}^{K-1}\omega_{T,k}^2.
    \end{equation}
    The final inequality in the proof stems from the linearization result Lemma \ref{lemma:approx}, and directly follows from
    $$\cR_T\LL(\frac{1}{K}\sum_{k<K}\Theta(k)\RR) \leq \frac{4}{K}\sum_{k<K}\cR_T(\Theta(k))+\frac{6}{\sqrt{m}}\LL(\varrho_2\Lambda_T^2+\varrho_1\chi_T\RR)\|\rho\|_2^2,$$ which directly follows from \cite{cayci2024convergence}, Corollary 1.
\end{proof}

In the following, we study the error under mean-path Rec-TD learning algorithm.

\begin{theorem}[Finite-time bounds for mean-path Rec-TD] For $K\in \bN$, with the step-size choice $\eta = \frac{(1-\gamma)^2}{64L_T^2}$, mean-path Rec-TD learning achieves the following error bound:
\begin{equation*}
        \bE\LL[\frac{1}{K}\sum_{k<K}\cR_T^\bpi(\Theta(k))\RR]\leq \frac{2\|\nu\|_2^2}{(1-\gamma)\eta K}+\frac{\gamma^T\omega_{T,k}}{1-\gamma} + \frac{C_T\delta_\msf{max}}{(1-\gamma)^2\sqrt{m}}+\eta\LL(\frac{(C_T')^2}{m}+16\gamma^{2T}L_T^4(\|\rho\|_2^2+\|\nu\|_2^2)\RR),
    \end{equation*}
    where $C_T'$ and $L_T$ are terms that do not depend on $K$.
    \label{thm:mean-path-rec-td}
\end{theorem}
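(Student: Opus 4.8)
The plan is to mirror the drift argument used for Theorem~\ref{thm:rec-td}, with the single decisive change being a sharper bound on the second-order term that exploits the absence of sampling noise in the mean-path direction. Write the mean-path semi-gradient as $g(\Theta):=\bE_\mu^\pi\LL[\sum_{t<T}\gamma^t\delta_t(\Z_{t+1};\Theta)\nabla F_t(\Z_t;\Theta)\RR]$, retain the point of attraction $\bar{\Theta}$ from \eqref{eqn:bar-theta} and the potential $\Psi(\Theta)=\|\Theta-\bar{\Theta}\|_2^2$, and start from the non-expansive drift inequality $\Psi(\Theta(k+1))\leq \Psi(\Theta(k))+2\eta\langle g(\Theta(k)),\Theta(k)-\bar{\Theta}\rangle+\eta^2\|g(\Theta(k))\|_2^2$, which now holds deterministically since the iteration carries no randomness beyond the initialization.

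First I would dispatch the cross term. Because $\nabla F_t(\Z_t;\Theta(k))$ is $\Z_t$-measurable, the tower property gives $\langle g(\Theta(k)),\Theta(k)-\bar{\Theta}\rangle=\sum_{t<T}\gamma^t\bE_\mu^\pi[(\spadesuit)_t]$ with $(\spadesuit)_t$ exactly the quantity analyzed in the proof of Theorem~\ref{thm:rec-td}. Hence the Bellman decomposition (Proposition~\ref{prop:bellman}), the change-of-feature to $\Theta(0)$, the linearization bound \eqref{eqn:linearization-error}, and the approximation bound of Lemma~\ref{lemma:approx} reproduce verbatim the negative-drift estimate $\sum_{t<T}\gamma^t\bE[(\spadesuit)_t]\leq -\tfrac{1-\gamma}{2}\sum_{t<T}\gamma^t\omega_{t,k}^2+\tfrac{1}{2}\gamma^T\omega_{T,k}^2+\tfrac{C_T\delta_\msf{max}}{(1-\gamma)\sqrt{m}}$, where $\sum_{t<T}\gamma^t\omega_{t,k}^2=\cR_T^\pi(\Theta(k))$.

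The crux is the quadratic term, and here the mean-path structure pays off. Rather than the crude uniform bound $\|g\|_2\leq \delta_\msf{max}L_T/(1-\gamma)$ used for the stochastic iterate (which produces a non-vanishing $\eta\,\delta_\msf{max}^2$ floor), I would control $g$ by the \emph{current} value error. Using $\|\nabla F_t(\Z_t;\Theta(k))\|_2\leq L_T$ on $\Omega_{\rho,m}$ (Lemma~\ref{lemma:prelim}), the tower property, and the conditional identity $\check{\bE}_t[\delta_t]=\gamma\,\check{\bE}_t[F_{t+1}-\cQ_{t+1}^\pi]+(\cQ_t^\pi-F_t)$, Jensen's inequality yields $\|g(\Theta(k))\|_2\leq L_T\LL(2\sum_{t<T}\gamma^t\omega_{t,k}+\gamma^T\omega_{T,k}\RR)$ up to an $\cO(1/\sqrt{m})$ remainder from replacing $F_t$ by its linearization $F_t^\msf{Lin}$ and from Lemma~\ref{lemma:approx}. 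Squaring and applying Cauchy--Schwarz then gives $\|g(\Theta(k))\|_2^2\lesssim \tfrac{L_T^2}{1-\gamma}\cR_T^\pi(\Theta(k))+\gamma^{2T}L_T^2\omega_{T,k}^2+\tfrac{(C_T')^2}{m}$; the first summand is proportional to the loss appearing in the negative drift, and $\omega_{T,k}^2\lesssim L_T^2(\|\rho\|_2^2+\|\nu\|_2^2)$ via \eqref{eqn:norm-bound} turns the boundary piece into $\gamma^{2T}L_T^4(\|\rho\|_2^2+\|\nu\|_2^2)$.

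Finally I would combine both estimates and calibrate the step-size. With $\eta=\tfrac{(1-\gamma)^2}{64L_T^2}$ the quadratic coefficient $\tfrac{\eta^2L_T^2}{1-\gamma}$ is a fixed small fraction of the drift coefficient $\eta(1-\gamma)$, so the two $\cR_T^\pi(\Theta(k))$ contributions merge into a strictly negative multiple of $\cR_T^\pi(\Theta(k))$, while the $\gamma^{2T}$, $1/\sqrt{m}$, and $1/m$ pieces persist as additive errors. Telescoping over $k=0,\dots,K-1$, using $\Psi(\Theta(0))\leq\|\nu\|_2^2$ and $\Psi(\Theta(K))\geq 0$, and dividing by the net drift coefficient then yields the four stated terms. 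I expect the main obstacle to be step three: establishing the gradient-domination-type inequality $\|g\|_2^2\lesssim \cR_T^\pi(\Theta(k))/(1-\gamma)+\text{errors}$ cleanly, in particular tracking the horizon-boundary contribution at $t=T-1$ (the source of the $\gamma^{2T}$ factor) and verifying that the change-of-feature remainders enter only at order $1/m$ rather than contaminating the leading loss-proportional term.
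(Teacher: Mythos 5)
Your proposal is correct, and the overall skeleton (potential $\Psi(\Theta)=\|\Theta-\bar\Theta\|_2^2$, reuse of the negative-drift estimate for $(\spadesuit)_t$ from Theorem~\ref{thm:rec-td}, step-size calibration so that the quadratic term is dominated by the drift, telescoping) coincides with the paper's. Where you genuinely diverge is the key step you correctly flagged as the crux: bounding $\|\bar{\nabla}\cR_T(\Theta(k))\|_2^2$ by a loss-proportional quantity. The paper splits $\bar{\nabla}\cR_T(\Theta(k))=\big(\bar{\nabla}\cR_T(\Theta(k))-\bar{\nabla}\cR_T(\bar\Theta)\big)+\bar{\nabla}\cR_T(\bar\Theta)$, shows $\|\bar{\nabla}\cR_T(\bar\Theta)\|_2^2=\cO(1/m)$ via Lemma~\ref{lemma:approx}, and controls the difference through a Lipschitz-type estimate on $\Theta\mapsto\bar{\nabla}\cR_T(\Theta)$ (introducing the constants $C_T^{(3)},C_T^{(4)}$ and the smoothness bounds of Lemma~\ref{lemma:prelim}) before converting it back to $\sqrt{\cR_T(\Theta(k))}$. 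You instead bound $\|\bar{\nabla}\cR_T(\Theta(k))\|_2$ directly at the current iterate via the tower property, the Bellman identity $\check{\bE}_t[\delta_t]=\gamma\,\check{\bE}_t[F_{t+1}-\cQ_{t+1}^\pi]+(\cQ_t^\pi-F_t)$, Jensen, and Cauchy--Schwarz, obtaining $\|\bar{\nabla}\cR_T(\Theta(k))\|_2^2\leq \frac{8L_T^2}{1-\gamma}\cR_T(\Theta(k))+2\gamma^{2T}L_T^2\omega_{T,k}^2$; this is exactly the argument the paper applies only at $\bar\Theta$, transplanted to $\Theta(k)$. Your route is cleaner and gives an even smaller coefficient than the paper's $32L_T^2/(1-\gamma)$, so the stated step-size works with room to spare; it also makes the $(C_T')^2/m$ term unnecessary for this step (your remark about a linearization remainder is superfluous -- no linearization is needed in the direct bound -- but harmless, since adding a nonnegative term only weakens the bound). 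The boundary contribution $\gamma^{2T}L_T^4(\|\rho\|_2^2+\|\nu\|_2^2)$ follows from $\omega_{T,k}^2\lesssim L_T^2(\|\rho\|_2^2+\|\nu\|_2^2)$ using \eqref{eqn:norm-bound} and the uniform bound on $\cQ_T^\pi$ under Assumption~\ref{assumption:representation-td}, as you indicate. In short: same theorem, same drift framework, but a more elementary and slightly sharper treatment of the quadratic term that bypasses the paper's semi-gradient Lipschitz decomposition through $\bar\Theta$.
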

Theorem \ref{thm:mean-path-rec-td} indicates that if a noiseless semi-gradient is used in Rec-TD, then the rate can be improved from $\cO\LL(\frac{1}{\sqrt{K}}\RR)$ to $\cO\LL(\frac{1}{K}\RR)$, indicating the potential limits of using variance-reduction schemes.

\begin{proof}[Proof of Theorem \ref{thm:mean-path-rec-td}]
    At any iteration $k\in\bN$, let
    \begin{equation}
        \bar{\nabla}\cR_T(\Theta(k)):=\bE_\mu^\pi\Big[\check{\nabla}\cR(\Z_t^k;\Theta(k))\Big],
    \end{equation}
    be the \textbf{\textit{mean-path semi-gradient}}. First, note that
    \begin{equation}
        \|\bar{\nabla}\cR_T(\Theta(k))\|_2^2\leq 2\|\bar{\nabla}\cR_T(\Theta(k))-\bar{\nabla}\cR_T(\bar{\Theta})\|_2^2+2\|\bar{\nabla}\cR_T(\bar{\Theta})\|_2^2.
    \end{equation}

    \textbf{Bounding $\|\bar{\nabla}\cR_T(\bar{\Theta})\|_2^2$.} For any $k\in\bN,t\leq T$, we have
    \begin{align*}
        \bE\big[\delta_t(\Z_{t+1}^k;\bar{\Theta})|\Z_t^k,\Theta(0),c\big] = \gamma\bE[F_{t+1}(\Z_{t+1}^k;\bar{\Theta})-\cQ_{t+1}^\pi(\Z_{t+1}^k)|\Z_t^k,\Theta(0),c]+\cQ_t^\pi(\Z_t^k)-F_t(\Z_t^k;\bar{\Theta}).
    \end{align*}
    Since $\|\nabla F_t(\z_t;\bar{\Theta})\|_2\leq L_t$, the following inequality holds:    
    \begin{align}
        \nonumber \big\|\bE\big[\delta_t(\Z_{t+1}^k;\bar{\Theta})\nabla F_t(\Z_t^k;\bar{\Theta})\big]\big\|_2 &\leq \bE\LL\|\bE\big[\delta_t(\Z_{t+1}^k;\bar{\Theta})|\Z_t^k,\Theta(0),c\big]\nabla F_t(\Z_t^k;\bar{\Theta})\RR\|_2,\\
        \nonumber &\leq L_T\bE\LL|\bE\big[\delta_t(\Z_{t+1}^k;\bar{\Theta})|\Z_t^k,\Theta(0),c\big]\RR|,\\
        &\leq L_T\LL(\gamma\bE\LL|F_{t+1}(\Z_{t+1}^k;\bar{\Theta})-\cQ_{t+1}^\pi(\Z_{t+1}^k)\RR|+\bE\LL|\cQ_t^\pi(\Z_t^k)-F_t(\Z_t^k;\bar{\Theta})\RR|\RR),\label{eqn:cond-exp-td}
    \end{align}
    where we used Jensen's inequality, the law of iterated expectations, and triangle inequality. From the above inequality, we obtain
    \begin{align*}
        \|\bar{\nabla}\cR_T(\bar{\Theta})\|_2 &\overset{\circled{1}}{\leq} \sum_{t=0}^{T-1}\gamma^t \big\|\bE\big[\delta_t(\Z_{t+1}^k;\bar{\Theta})\nabla F_t(\Z_t^k;\bar{\Theta})\big]\big\|_2,\\&
        \overset{\circled{2}}{\leq} L_T\gamma\sum_{t<T}\gamma^t\bE|F_{t+1}(\Z_{t+1}^k;\bar{\Theta})-\cQ_{t+1}^\pi(\Z_{t+1}^k)|+L_T\sum_{t<T}\gamma^t \bE|\cQ_t^\pi(\Z_t^k)-F_t(\Z_t^k;\bar{\Theta})|,\\
        &\overset{\circled{3}}{\leq} \frac{L_T}{\sqrt{1-\gamma}}\LL(\gamma\bE\sqrt{\sum_{t<T}\gamma^t|F_{t+1}(\Z_{t+1}^k;\bar{\Theta})-\cQ_{t+1}^\pi(\Z_{t+1}^k)|^2}+\bE\sqrt{\sum_{t<T}\gamma^t|F_{t}(\Z_{t}^k;\bar{\Theta})-\cQ_{t}^\pi(\Z_{t}^k)|^2}\RR),\\
        &\overset{\circled{4}}{\leq} \frac{L_T}{\sqrt{1-\gamma}}\LL(\gamma\sqrt{\bE\sum_{t<T}\gamma^t|F_{t+1}(\Z_{t+1}^k;\bar{\Theta})-\cQ_{t+1}^\pi(\Z_{t+1}^k)|^2}+\sqrt{\bE\sum_{t<T}\gamma^t|F_{t}(\Z_{t}^k;\bar{\Theta})-\cQ_{t}^\pi(\Z_{t}^k)|^2}\RR),\\
        &\overset{\circled{5}}{\leq} \frac{\sqrt{2}(1+\gamma)L_T}{\sqrt{1-\gamma}}\frac{\|\nu\|_2\sqrt{1+\varrho_0^2}\cdot p_T(\varrho_1\alpha)}{\sqrt{m}}.
    \end{align*}
    where $\circled{1}$ follows from triangle inequality, $\circled{2}$ follows from \eqref{eqn:cond-exp-td}, $\circled{3}$ follows from Cauchy-Schwarz inequality and the monotonicity of the geometric series $T\mapsto\sum_{t<T}\gamma^t$, $\circled{4}$ follows from Jensen's inequality, and finally $\circled{5}$ follows from Lemma \ref{lemma:approx}.
    Hence, we obtain
    \begin{equation}
        \|\bar{\nabla}{\cR}_T(\bar{\Theta})\|_2^2 \leq \frac{8L_T^2\|\nu\|_2^2(1+\varrho_0^2)p_T^2(\varrho_1\alpha)}{(1-\gamma)m}.
        \label{eqn:quadratic-bound-a}
    \end{equation}

    \textbf{Bounding $\|\bar{\nabla}\cR_T(\Theta(k))-\bar{\nabla}\cR_T(\bar{\Theta})\|_2^2$.} First, note that
    \begin{align*}
        \|\bar{\nabla}\cR_T(\Theta(k))-\bar{\nabla}\cR_T(\bar{\Theta})\|_2 = \|\bE\Big[\sum_{t<T}\gamma^t\LL(\delta_{t}(\Z_{t+1}^k;\Theta(k))\nabla F_t(\Z_t^k;\Theta(k)) - \delta_{t}(\Z_{t+1}^k;\bTheta)\nabla F_t(\Z_t^k;\bTheta)\RR)\Big|\Big]\|_2
    \end{align*}
    We make the following decomposition for each $t<T$:
    \begin{multline}
        \delta_{t}(\Z_{t+1}^k;\Theta(k))\nabla F_t(\Z_t^k;\Theta(k)) - \delta_{t}(\Z_{t+1}^k;\bTheta)\nabla F_t(\Z_t^k;\bTheta) = \delta_{t}(\Z_{t+1}^k;\Theta(k))\LL(\nabla F_t(\Z_t^k;\Theta(k)) - \nabla F_t(\Z_t^k;\bTheta)\RR)\\+\nabla F_t(\Z_t^k;\Theta(k))\LL(\delta_{t}(\Z_{t+1}^k;\bTheta)-\delta_{t}(\Z_{t+1}^k;\Theta(k))\RR)
    \end{multline}
    By Lemma \ref{lemma:prelim}, we have $|\delta_t(\Z_{t+1}^k;\Theta)|\leq \delta_\msf{max}$ and $\|\nabla F_t(\Z_t^k;\Theta)\|_1\leq L_t\leq L_T$ almost surely for any $\Theta\in\Omega_{\rho,m}$, which holds for $\Theta(k)$ (due to the max-norm projection) and $\bTheta$. As such, by triangle inequality,
    \begin{align}
        \nonumber \|\bar{\nabla}\cR_T(\Theta(k))-\bar{\nabla}\cR_T(\bar{\Theta})\|_2 &\leq \sum_{t<T}\gamma^t\LL(\delta_\msf{max}\frac{\beta_t^2\bE\|\Theta(k)-\bTheta\|_2^2}{m}+L_t\bE|\delta_{t}(\Z_{t+1}^k;\bTheta)-\delta_{t}(\Z_{t+1}^k;\Theta(k))|\RR),\\
        &\leq \underbrace{\frac{\delta_\msf{max}\beta_T^2(\|\rho\|_2^2+\|\nu\|_2^2)}{m(1-\gamma)}}_{=:\frac{C_T\ui{4}}{m}}+L_T\bE\LL[\sum_{t=0}^{T-1}\gamma^t|\delta_{t}(\Z_{t+1}^k;\bTheta)-\delta_{t}(\Z_{t+1}^k;\Theta(k))|\RR]
        \label{eqn:cost-smoothness}
    \end{align}

    Note that
    \begin{align}
        \nonumber \sum_{t<T}\gamma^t|\delta_t(\Z_{t+1}^k;\Theta(k))-\delta_t(\Z_{t+1}^k;\bTheta)| &= \sum_{t<T}\gamma^t\Big(|F_{t+1}(\Z_{t+1}^k;\bar{\Theta})-F_{t+1}(\Z_{t+1}^k;{\Theta}(k))|+|F_{t}(\Z_{t}^k;\bar{\Theta})-F_{t}(\Z_{t}^k;{\Theta}(k))|\Big),\\
        &\leq 2 \sum_{t<T}\gamma^t\Big|F_{t}(\Z_{t}^k;\bar{\Theta})-F_{t}(\Z_{t}^k;{\Theta}(k))\Big| + \gamma^TL_T\|\Theta(k)-\bTheta\|_2,
    \end{align}
    where the second line follows from the Lipschitz continuity of $\Theta\mapsto F_t(\cdot;\Theta)$. Then, adding and subtracting $\cQ_t^\pi$ to each term, we obtain
    \begin{multline}
        \sum_{t<T}\gamma^t|\delta_t(\Z_{t+1}^k;\Theta(k))-\delta_t(\Z_{t+1}^k;\bTheta)| \leq 2 \sum_{t<T}\gamma^t\LL(|F_{t}(\Z_{t}^k;\bar{\Theta})-\cQ_t^\pi(\Z_t^k)|+|\cQ_t^\pi(\Z_t^k)-F_{t}(\Z_{t}^k;{\Theta}(k))|\RR) \\+ \gamma^TL_T\|\Theta(k)-\bTheta\|_2.
    \end{multline}
    Taking expectation, we obtain
    \begin{multline*}
        \bE\sum_{t<T}\gamma^t|\delta_t(\Z_{t+1}^k;\Theta(k))-\delta_t(\Z_{t+1}^k;\bTheta)| \leq \frac{2}{\sqrt{1-\gamma}}\sqrt{\bE\LL[\sum_{t<T}\gamma^t|F_t(\Z_t^k;\Theta(k))-\cQ_t^\pi(\Z_t^k)|^2\RR]}\\+\frac{2}{\sqrt{1-\gamma}}\sqrt{\bE\LL[\sum_{t<T}\gamma^t|F_t(\Z_t^k;\bTheta)-\cQ_t^\pi(\Z_t^k)|^2\RR]}+\gamma^TL_T\|\Theta(k)-\bTheta\|_2.
    \end{multline*}
    By Lemma \ref{lemma:approx} and \eqref{eqn:linearization-error}, we have
    \begin{equation*}
        \bE|F_t(\Z_t^k;\bTheta)-\cQ_t^\pi(\Z_t^k)|^2 \leq \frac{4}{m}\|\nu\|_2^2\varrho_1^2(1+\varrho_0)^2p_t^2(\alpha\varrho_1) + \frac{4}{m}(\varrho_2 \Lambda_T^2+\varrho_1 \chi_T)^2\|\rho\|_2^4,
    \end{equation*}
    for any $t<T$. Thus,
    \begin{multline*}
        \bE\sum_{t<T}\gamma^t|\delta_t(\Z_{t+1}^k;\Theta(k))-\delta_t(\Z_{t+1}^k;\bTheta)| \leq \frac{2}{\sqrt{1-\gamma}}\sqrt{\bE\LL[\sum_{t<T}\gamma^t|F_t(\Z_t^k;\Theta(k))-\cQ_t^\pi(\Z_t^k)|^2\RR]}\\+\frac{1}{\sqrt{m}}\underbrace{\frac{4}{\sqrt{(1-\gamma})^3}\LL(\|\nu\|_2\varrho_1(1+\varrho_0)p_T(\alpha\varrho_1)+(\varrho_2 \Lambda_T^2+\varrho_1 \chi_T)\|\rho\|_2^2)\RR)}_{=: C_T\ui{3}}+\gamma^TL_T\underbrace{\|\Theta(k)-\bTheta\|_2}_{\leq \|\rho\|_2+\|\nu\|_2}.
    \end{multline*}
    This results in the following bound:
    \begin{equation}
        \bE\sum_{t<T}\LL[\gamma^t|\delta_t(\Z_{t+1}^k;\Theta(k))-\delta_t(\Z_{t+1}^k;\bTheta)|\RR]\leq  \frac{2}{\sqrt{1-\gamma}}\sqrt{\cR_T(\Theta(k))} + \frac{C_T\ui{3}}{\sqrt{m}}+\gamma^TL_T(\|\rho\|_2+\|\nu\|_2).
        \label{eqn:td-smoothness}
    \end{equation}
    Substituting the local smoothness result in \eqref{eqn:td-smoothness} into \eqref{eqn:cost-smoothness}, we obtain
    \begin{equation*}
        \|\bar{\nabla}\cR_T(\Theta(k))-\bar{\nabla}\cR_T(\bar{\Theta})\|_2 \leq L_T\LL(\frac{2}{\sqrt{1-\gamma}}\sqrt{\cR_T(\Theta(k))} + \frac{C_T\ui{3}}{\sqrt{m}}+\gamma^TL_T(\|\rho\|_2+\|\nu\|_2)\RR)+\frac{C_T\ui{4}}{m}.
    \end{equation*}
    Thus, we obtain
    \begin{equation}
        \|\bar{\nabla}\cR_T(\Theta(k))-\bar{\nabla}\cR_T(\bar{\Theta})\|_2^2 \leq \frac{16L_T^2}{{1-\gamma}}{\cR_T(\Theta(k))} + \frac{4(C_T\ui{3})^2L_T^2+4(C_T\ui{4})^2}{{m}}+8\gamma^{2T}L_T^4(\|\rho\|_2^2+\|\nu\|_2^2).
        \label{eqn:quadratic-bound-b}
    \end{equation}
    Using \eqref{eqn:quadratic-bound-a} and \eqref{eqn:quadratic-bound-b} together, we obtain
    \begin{align}
        \nonumber \|\bar{\nabla}\cR_T(\Theta(k))\|_2^2 &\leq 2\|\bar{\nabla}\cR_T(\Theta(k))-\bar{\nabla}\cR_T(\bar{\Theta})\|_2^2+2\|\bar{\nabla}\cR_T(\bar{\Theta})\|_2^2,\\
        &\leq \frac{32L_T^2\cR_T(\Theta(k))}{1-\gamma}+\frac{(C_T')^2}{m}+16\gamma^{2T}L_T^4(\|\rho\|_2^2+\|\nu\|_2^2).
        \label{eqn:quad-bound}
    \end{align}
    In the final step, we use \eqref{eqn:drift}, \eqref{eqn:neg-drift} and \eqref{eqn:quad-bound} together:
    \begin{multline}
        \bE\LL[\Psi(\Theta(k+1))-\Psi(\Theta(k))\RR]\leq -\eta(1-\gamma)\bE\cR_T(\Theta(k))+\eta\gamma^T\omega_{T,k} + \eta\frac{C_T\delta_\msf{max}}{(1-\gamma)\sqrt{m}}\\+\eta^2\LL(\frac{32L_T^2\bE\cR_T(\Theta(k))}{1-\gamma}+\frac{(C_T')^2}{m}+16\gamma^{2T}L_T^4(\|\rho\|_2^2+\|\nu\|_2^2)\RR),
    \end{multline}
    where the expectation is over the random initialization. Choosing $\eta = \frac{(1-\gamma)^2}{64L_T^2}$, we obtain
    \begin{multline}
        \bE[\Psi(\Theta(k+1))-\Psi(\Theta(k))]\leq -\frac{\eta(1-\gamma)}{2}\bE\cR_T(\Theta(k))+\eta\gamma^T\omega_{T,k} + \eta\frac{C_T\delta_\msf{max}}{(1-\gamma)\sqrt{m}}\\+\eta^2\LL(\frac{(C_T')^2}{m}+16\gamma^{2T}L_T^4(\|\rho\|_2^2+\|\nu\|_2^2)\RR).
    \end{multline}
    Telescoping sum over $k=0,1,\ldots, K-1$, and re-arranging terms, we obtain:
    \begin{equation}
        \bE\LL[\frac{1}{K}\sum_{k<K}\cR_T(\Theta(k))\RR]\leq \frac{2\|\nu\|_2^2}{(1-\gamma)\eta K}+\frac{\gamma^T\omega_{T,k}}{1-\gamma} + \frac{C_T\delta_\msf{max}}{(1-\gamma)^2\sqrt{m}}+\eta\LL(\frac{(C_T')^2}{m}+16\gamma^{2T}L_T^4(\|\rho\|_2^2+\|\nu\|_2^2)\RR).
    \end{equation}
\end{proof}

\section{Numerical Experiments for Rec-TD}\label{sec:numerical-td}
In the following, we will demonstrate the numerical performance of Rec-TD for a given non-stationary policy $\pi^\msf{greedy}$. 

\textbf{POMDP setting.} We consider a randomly-generated finite POMDP instance with $|\bS|=|\bY| = 8$, $|\bA|=4$, $r(s,a)\sim\msf{Unif}[0,1]$ for all $(s,a)\in\bS\times\bA$. For a fixed ambient dimension $d = 8$, we use a random feature mapping $(y,a)\mapsto\varphi(y,a)\sim\cN(0,I_d),~\forall(y,a)\sim\bY\times\bA$.

\textbf{$\epsilon$-greedy policy.} Let $$j^\star(t)\in\arg\max_{0\leq j < t}r_j,$$ be the instance before $t$ at which the maximum reward was obtained, and let
\begin{equation}
    \pi^\msf{\epsilon-greedy}_t(a|Z_t) = \begin{cases}
        \frac{1}{|\bA|}, &\mbox{~w.p.~} \min\{\frac{2+t}{10},p_\msf{exp}\},\\
        \mathbbm{1}_{a = A_{j^\star(t)}}, &\mbox{~w.p.~} 1-\min\{\frac{2+t}{10},p_\msf{exp}\},
    \end{cases}
\end{equation}
be the greedy policy with a user-specified exploration probability $p_\msf{exp}\in(0,1)$. The long-term dependencies in this greedy policy are obviously controlled by $p_\msf{exp}$: a small exploration probability will make the policy (thus, the corresponding $\cQ$-functions) more history-dependent. Since the exact computation of $(\cQ^\pi_t)_{t\in\bN}$ is highly intractable for POMDPs, we use (empirical) mean-squared temporal difference (MSTD) \footnote{the empirical mean of independently sampled $\LL\{\frac{1}{k}\sum_{s<k}\hat{\cR}_T^\msf{TD}(\Theta(s)):k\in\bN\RR\}$ where $\hat{\cR}_T^\msf{TD}(\Theta(k))=\sum_{t=0}^{T-1}\gamma^t\delta_t^2(\Z_t^k;\Theta(k)).$} as a surrogate loss.

\textbf{Example 1 (Short-term memory).} We first consider the performance of Rec-TD with learning rate $\eta = 0.05$, discount factor $\gamma=0.9$ and RNNs with various choices of network width $m$. For $p_\msf{exp}=0.8$, the performance of Rec-TD is demonstrated in Figure \ref{fig:rec-td-weak}.
\begin{figure}[h]
     \centering
     \begin{subfigure}[b]{0.3\textwidth}
         \centering
         \includegraphics[width=\textwidth]{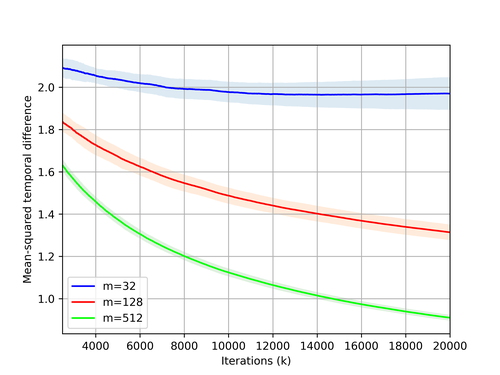}
         \caption{Mean-squared TD, $T=8$}
         \label{fig:mstd-weak-T8}
     \end{subfigure}
     \hfill
     \begin{subfigure}[b]{0.3\textwidth}
         \centering
         \includegraphics[width=\textwidth]{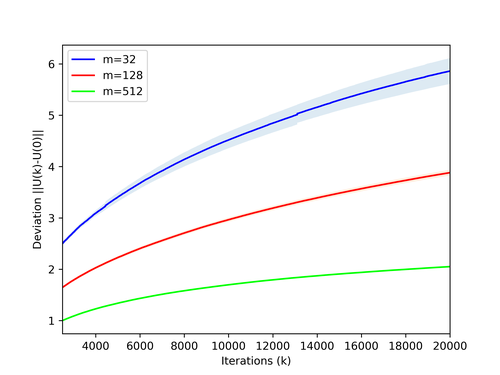}
         \caption{$\frac{1}{m}\sum_i\|U_i(k)-U_i(0)\|$, $T=8$.}
         \label{fig:dev_u-weak-T8}
     \end{subfigure}
     \hfill
     \begin{subfigure}[b]{0.3\textwidth}
         \centering
         \includegraphics[width=\textwidth]{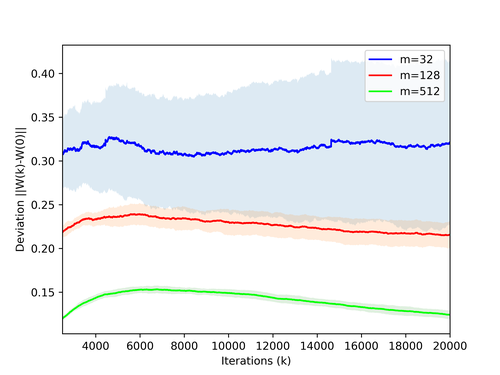}
         \caption{$\frac{1}{m}\sum_i|W_{ii}(k)-W_{ii}(0)|$, $T=8$.}
         \label{fig:dev_w-weak-T8}
     \end{subfigure}
        \caption{Mean-squared TD and (mean) parameter deviation under Rec-TD for the case $p_\msf{exp}=0.8$ and $\gamma = 0.9$. The mean curve and confidence intervals (90\%) stem from 5 trials.}
        \label{fig:rec-td-weak}
\end{figure}
Consistent with the theoretical results in Theorem \ref{thm:rec-td}, Rec-TD (1) achieves smaller error with larger network width $m$, (2) requires smaller deviation from the random initialization $\Theta(0)$, which is known as the \textit{lazy training} phenomenon.

\textbf{Example 2 (Long-term memory).} In the second example, we consider the same POMDP with same random samples, and an RNN with the same neural network initialization. The exploration probability is reduced to $p_\msf{exp}=0.25$, which leads to longer dependency on the history. This impact can be observed in Figure \ref{fig:dev_w-strong-T8}, which implies a larger spectral radius compared to Example 1 (in comparison with Figure \ref{fig:dev_w-weak-T8}).
\begin{figure}[h]
     \centering
     \begin{subfigure}[b]{0.3\textwidth}
         \centering
         \includegraphics[width=\textwidth]{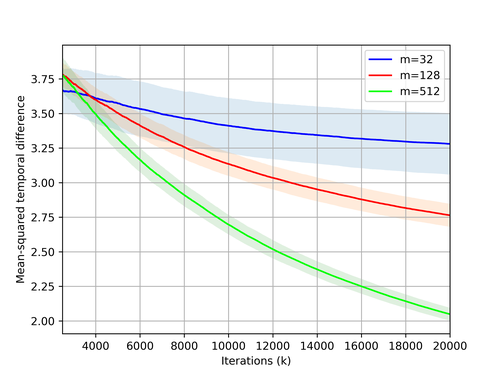}
         \caption{Mean-squared TD, $T=8$}
         \label{fig:mstd-strong-T8}
     \end{subfigure}
     \hfill
     \begin{subfigure}[b]{0.3\textwidth}
         \centering
         \includegraphics[width=\textwidth]{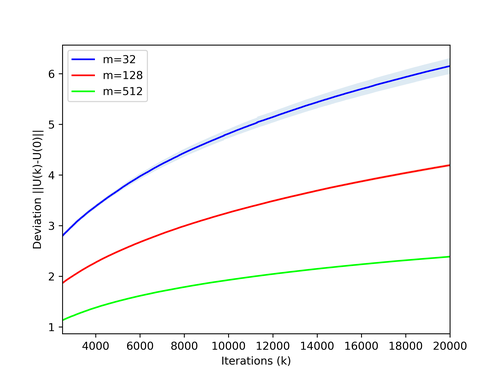}
         \caption{$\frac{1}{m}\sum_i\|U_i(k)-U_i(0)\|$, $T=8$.}
         \label{fig:dev_u-strong-T8}
     \end{subfigure}
     \hfill
     \begin{subfigure}[b]{0.3\textwidth}
         \centering
         \includegraphics[width=\textwidth]{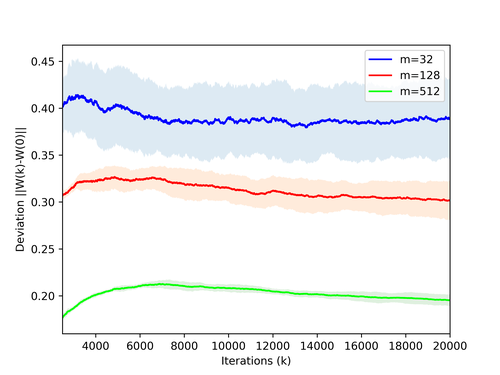}
         \caption{$\frac{1}{m}\sum_i|W_{ii}(k)-W_{ii}(0)|$, $T=8$.}
         \label{fig:dev_w-strong-T8}
     \end{subfigure}
        \caption{Mean-squared TD and (mean) parameter deviation under Rec-TD for the case $p_\msf{exp}=0.25$ and $\gamma = 0.9$. The mean curve and confidence intervals (90\%) stem from 5 trials.}
        \label{fig:rec-td-strong}
\end{figure}

In Figure \ref{fig:mstd-various-T}, we investigate the impact of the truncation level $T$ on the MSTD performance with $p_\msf{exp}=0.25$, which implies long-term dependency, for an RNN with $m=256$ units. Increasing $T$ implies a larger MSTD due to long-term dependencies, validating the theoretical results.
\begin{figure}[b]
    \centering
    \includegraphics[width=0.3\linewidth]{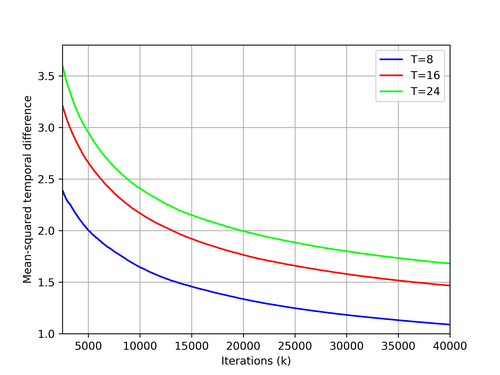}
    \caption{MSTD performance with $m=256$ with various sequence lengths $T$ with $p_\msf{exp}=0.25$. Increasing $T$ implies larger MSTD.}
    \label{fig:mstd-various-T}
\end{figure}

\section{Policy Gradients under Partial Observability}
In this section, we will provide basic results for policy gradients under POMDPs, which is critical to develop the natural policy gradient method for POMDPs.

\begin{proposition}
    Let $\pi'\in\Pi_\msf{NM}$ be an admissible policy, and let $\Z_T\sim P_T^{\pi',\mu}$. Then, for any $t<T$, conditional distribution of $S_t$ given $\Z_t$ is independent of $\pi'$. Furthermore, for any $\pi\in\Pi_\msf{NM}$, the conditional distribution of $r(S_t,A_t)+\gamma\cV_{t+1}^\pi(Z_{t+1})$ given $\Z_t$ is independent of $\pi'$.
    \label{prop:cond-independence}
\end{proposition}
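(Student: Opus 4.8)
The plan is to write out the likelihood of a full trajectory under $\pi'$ and split it into a part governed only by the policy-independent kernels $\cP$ and $\phi$ (and the fixed prior on $S_0$) and a part depending only on the observed history (the product of policy probabilities). For a realized history $\z_t=(y_0,a_0,\ldots,y_t,a_t)$ with intermediate truncations $z_k=(y_0,a_0,\ldots,y_k)$, the joint law of the latent states and the history factorizes as
\begin{align*}
    \bP^{\pi'}(S_{0:t}=s_{0:t},\Z_t=\z_t) &= \underbrace{\bP(S_0=s_0)\phi(s_0,y_0)\prod_{k=1}^{t}\cP((s_{k-1},a_{k-1}),s_k)\phi(s_k,y_k)}_{=:\,M(s_{0:t},\z_t)}\\
    &\quad\times\underbrace{\prod_{k=0}^{t}\pi_k'(a_k|z_k)}_{=:\,\Pi'(\z_t)}.
\end{align*}
The factor $M$ carries all the dependence on the states and is free of $\pi'$, while $\Pi'(\z_t)$ is a function of the conditioning variables alone.

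For the first claim I would apply Bayes' rule to $\bP^{\pi'}(S_t=s\mid\Z_t=\z_t)$: summing the display over $s_{0:t-1}$ in the numerator and over $s_{0:t}$ in the denominator, the common factor $\Pi'(\z_t)$ cancels, leaving
\begin{align*}
    \bP^{\pi'}(S_t=s\mid\Z_t=\z_t)=\frac{\sum_{s_{0:t-1}}M(s_{0:t-1},s,\z_t)}{\sum_{s_{0:t}}M(s_{0:t},\z_t)},
\end{align*}
which manifestly does not involve $\pi'$ for any $\z_t$ in the support of $P_T^{\pi',\mu}$ (so that the conditioning event has positive probability). This is precisely the POMDP belief filter, depending on the trajectory only through $\cP$, $\phi$ and the prior on $S_0$.

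For the second claim I would first note that conditioning on $\Z_t=\z_t$ fixes $A_t=a_t$ and $Z_t=z_t$, so $Z_{t+1}=(z_t,a_t,Y_{t+1})$ and the quantity of interest is the deterministic map $g(S_t,Y_{t+1}):=r(S_t,a_t)+\gamma\cV_{t+1}^\pi(z_t,a_t,Y_{t+1})$ of $(S_t,Y_{t+1})$ — crucially, $\cV_{t+1}^\pi$ is the value function of the \emph{fixed} evaluation policy $\pi$ and carries no dependence on $\pi'$. It therefore suffices to show that the conditional law of $(S_t,Y_{t+1})$ given $\Z_t=\z_t$ is independent of $\pi'$. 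I would factor it by the chain rule as $\bP^{\pi'}(S_t=s_t\mid\Z_t=\z_t)\cdot\bP^{\pi'}(Y_{t+1}=y\mid S_t=s_t,\Z_t=\z_t)$; the first factor is policy-free by the first claim, and for the second I invoke the Markov structure of the state process together with the fact that $Y_{t+1}$ is emitted \emph{before} $A_{t+1}$ is ever drawn, giving
\begin{align*}
    \bP^{\pi'}(Y_{t+1}=y\mid S_t=s_t,\Z_t=\z_t)=\sum_{s'\in\bS}\cP((s_t,a_t),s')\phi(s',y),
\end{align*}
again free of $\pi'$. Since both factors are policy-free, so is the law of $g(S_t,Y_{t+1})$.

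The main obstacle is not the algebra but the careful bookkeeping of the conditioning: I must argue that the one-step-ahead observation $Y_{t+1}$ is generated purely by $\cP$ and $\phi$ applied to the already-fixed pair $(S_t,A_t)$, so that $\pi'$ enters the trajectory law only through the choice of future actions $A_{t+1},A_{t+2},\ldots$ (which are integrated out) and through the already-cancelled past-action factor $\Pi'(\z_t)$. Equivalently, the sole channel through which $\pi'$ could couple to $(S_t,Y_{t+1})$ is the filtering posterior of $S_t$, which the first claim has already shown to be policy-independent; what remains is a direct application of the Markov property conditioned on a positive-probability history.
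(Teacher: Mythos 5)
Your proposal is correct and follows essentially the same route as the paper's proof: factor the joint law of $(S_{0:t},\Z_t)$ into a kernel-only term times the product of policy probabilities, cancel the latter in the Bayes ratio to get a policy-free belief, and then push the one-step-ahead quantity through $\cP$ and $\phi$. If anything, your version is slightly more faithful to the statement, since you establish policy-independence of the conditional \emph{law} of $(S_t,Y_{t+1})$ (hence of any deterministic function of it), whereas the paper's proof only verifies the conditional expectation of $r(S_t,A_t)+\gamma\cV_{t+1}^\pi(Z_{t+1})$.
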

\begin{proof}[Proof of Prop. \ref{prop:cond-independence}]
    Let the belief at time $t\in\bN$ be defined as
\begin{equation}
    b_t(s) := \bP(S_t=s|\Z_t).
\end{equation}
For any non-stationary admissible policy $\pi$, the belief function is policy-independent. To see this, note that 
\begin{align*}\bP(S_t=s_t,\Z_t=\z_t) &= \sum_{(s_0,\ldots,s_{t-1})\in\bS^t}\bP(S_0=s_0|Y_0=y)\pi_0(a_0|z_0)\prod_{k=0}^{t-1}\cP(s_{k+1}|s_k,a_k)\phi(y_{k+1}|s_{k+1})\pi_{k+1}(a_{k+1}|z_{k+1}),\\
&= \LL(\prod_{k=0}^t\pi_k(a_k|z_k)\RR)\sum_{(s_0,\ldots,s_{t-1})\in\bS^t}\bP(S_0=s_0|Y_0=y)\prod_{k=0}^{t-1}\cP(s_{k+1}|s_k,a_k)\phi(y_{k+1}|s_{k+1}),
\end{align*}
since $\prod_{k=0}^t\pi_k(a_k|z_k)$ does not depend on the summands $(s_0,\ldots,s_{t-1})$ -- note that we use the notation $\cP(s_{k+1}|s_k,a_k):=\cP(s_k,a_k, \{S_{k+1}=s_{k+1}\})$ and $\phi(y_k|s_k) := \phi(s_k,\{Y_k=y_k\})$. Thus, $$b_t(s_t) = \frac{\sum_{(s_0,\ldots,s_{t-1})\in\bS^t}\bP(S_0=s_0|Y_0=y)\prod_{k=0}^{t-1}\cP(s_{k+1}|s_k,a_k)\phi(y_{k+1}|s_{k+1})}{\sum_{(s_0',\ldots,s_{t-1}',s_t')\in\bS^{t+1}}\bP(S_0=s_0'|Y_0=y)\prod_{k=0}^{t-1}\cP(s_{k+1}'|s_k',a_k)\phi(y_{k+1}|s'_{k+1})},$$
independent of $\pi$. As such, we have
\begin{align*}
    \bE^{\pi'}[r_t+\gamma\cV^\pi(Z_{t+1})|\Z_t] &= \sum_{s\in\bS}b_t(s)\bE^{\pi'}[r_t+\gamma\cV^\pi_{t+1}(Z_{t+1})|\Z_t=\z_t,S_t=s],\\
    &= \sum_{s_t,s_{t+1}\in\bS}\sum_{y\in\bY}b_t(s_t) \LL(r(s_t,A_t) + \gamma\cP(s_{t+1}|s_t,A_t)\phi(y|s_{t+1})\cV_{t+1}^\pi(Z_t,y_{t+1})\RR),\\
    &= \bE[r_t+\gamma\cV_{t+1}^\pi(Z_{t+1})|\Z_t=\z_t],
\end{align*}
in other words, the conditional distribution of $r(S_t,A_t)+\gamma\cV_{t+1}^\pi(Z_{t+1})$ given $\{\Z_t=\z_t\}$ is independent of $\pi'$. We also know from Prop. \ref{prop:bellman} that
\begin{equation*}
    \bE^{\pi'}[r_t+\gamma\cV_{t+1}^\pi(Z_{t+1})|\Z_t=\z_t]=\bE[r_t+\gamma\cV_{t+1}^\pi(Z_{t+1})|\Z_t=\z_t]=\cQ_t^\pi(\z_t).
\end{equation*}
\end{proof}

The next result generalizes the policy gradient theorem to POMDPs. We note that there is an extension of REINFORCE-type policy gradient for POMDPs in \cite{wierstra2010recurrent}. The following result is a different and improved version as it $\circled{1}$ provides a variance-reduced unbiased estimate of the policy gradient for POMDPs, and more importantly $\circled{2}$ yields the compatible function approximation (Prop. \ref{prop:cfa}) that yields natural policy gradient (NPG) for POMDPs.

\begin{proposition}[Policy gradient -- POMDPs]
    For any $\Phi\in\bR^{m(d+1)}$, we have
    \begin{equation}
        \nabla_\Phi\cV^{\pi^\Phi}(\mu) = \bE_\mu^{\pi^\Phi}\LL[\sum_{t=0}^\infty\gamma^t\cdot \cQ_t^{\pi^\Phi}(Z_t,A_t)\cdot \nabla_\Phi\ln\pi_t^\Phi(A_t|Z_t)\RR],
    \end{equation}
    for any $\mu\in\Delta(\bY)$.
    \label{prop:pgt-pomdp}
\end{proposition}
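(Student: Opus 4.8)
The plan is to run the score-function (log-derivative) argument on the path measure and then pass to the infinite horizon. Writing the value as a trajectory expectation $\cV^{\pi^\Phi}(\mu)=\bE_\mu^{\pi^\Phi}[\sum_{t\geq 0}\gamma^t r(S_t,A_t)]$, I would first factorize the joint law of $(S_k,Y_k,A_k)_{k\geq 0}$ exactly as in the proof of Proposition \ref{prop:cond-independence}: the initialization $\bP(S_0|Y_0)\mu(Y_0)$ and the kernels $\cP,\phi$ are $\Phi$-free, so the only $\Phi$-dependent factors in the path density are the policy terms $\pi_k^\Phi(A_k|Z_k)$, which gives $\nabla_\Phi\ln P^{\pi^\Phi,\mu}=\sum_{k\geq 0}\nabla_\Phi\ln\pi_k^\Phi(A_k|Z_k)$. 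Differentiating under the expectation then yields
\[
\nabla_\Phi\cV^{\pi^\Phi}(\mu)=\bE_\mu^{\pi^\Phi}\LL[\LL(\sum_{t\geq 0}\gamma^t r(S_t,A_t)\RR)\LL(\sum_{k\geq 0}\nabla_\Phi\ln\pi_k^\Phi(A_k|Z_k)\RR)\RR].
\]

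Next I would peel this double sum using causality together with the zero mean of the score. For $t<k$ the reward $r(S_t,A_t)$ is measurable with respect to the pre-action filtration $\mathcal{H}_k:=\sigma(S_j,Y_j:j\leq k;\ A_j:j<k)$, under which $A_k\sim\pi_k^\Phi(\cdot|Z_k)$; since $\bE[\nabla_\Phi\ln\pi_k^\Phi(A_k|Z_k)|\mathcal{H}_k]=\sum_{a}\nabla_\Phi\pi_k^\Phi(a|Z_k)=\nabla_\Phi 1=0$, every cross term with $t<k$ vanishes. For the surviving terms $t\geq k$, I condition on $\Z_k=(Z_k,A_k)$: the factor $\nabla_\Phi\ln\pi_k^\Phi(A_k|Z_k)$ is $\Z_k$-measurable, while $\bE[\sum_{t\geq k}\gamma^{t-k}r(S_t,A_t)|\Z_k]=\cQ_k^{\pi^\Phi}(\Z_k)$ is exactly the definition of the $\cQ$-function under $P^{\pi^\Phi,\mu}$. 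Hence each block collapses to $\gamma^k\,\bE_\mu^{\pi^\Phi}[\cQ_k^{\pi^\Phi}(Z_k,A_k)\nabla_\Phi\ln\pi_k^\Phi(A_k|Z_k)]$, and summing over $k$ (renaming $k\to t$) gives the stated identity.

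The main obstacle is rigorously justifying the interchange of $\nabla_\Phi$ with the infinite sum over $t$ and the expectation, and controlling the doubly-infinite sum $\sum_t\sum_k$. The reward sum is dominated using $|r|\leq r_\infty$ and $\gamma<1$, but the score factors $\nabla_\Phi\ln\pi_k^\Phi$ are not uniformly bounded in $k$ under the \textsc{SoftMax} parameterization (one only has $\|\nabla_\Phi\ln\pi_k^\Phi(a|z_k)\|_2\leq 2\max_{a'}\|\nabla_\Phi F_k((z_k,a');\Phi)\|_2$, which grows with $k$). The cleanest remedy is to establish the identity first for the $T$-truncated value, where all sums are finite and differentiation under the expectation is immediate, and then let $T\to\infty$, bounding the truncation tail by $\cO(\gamma^T r_\infty/(1-\gamma))$, which vanishes. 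A second point, specific to POMDPs rather than MDPs, is that $r(S_t,A_t)$ depends on the hidden state $S_t\notin\sigma(Z_k)$, so the conditioning in the causality and $\cQ$-identification steps must be carried out in the full filtration that includes the hidden states; Proposition \ref{prop:cond-independence} (policy-independence of the beliefs) is what guarantees that the resulting conditional quantities are the intended $\pi^\Phi$-quantities.
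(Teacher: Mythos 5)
Your proof is correct, but it takes a genuinely different route from the paper's. The paper's proof differentiates the non-stationary Bellman identity $\cV_t^{\pi^\Phi}(z_t)=\sum_{a}\pi_t^\Phi(a|z_t)\cQ_t^{\pi^\Phi}(z_t,a)$, invokes the policy-independence of the belief (Proposition \ref{prop:cond-independence}) to show that the immediate-reward contribution to $\nabla_\Phi\cQ_t^{\pi^\Phi}$ vanishes, and unrolls the resulting recursion for $\nabla_\Phi\cQ_t^{\pi^\Phi}$ into the infinite sum. You instead run the likelihood-ratio argument directly on the path measure: only the policy factors of the trajectory density depend on $\Phi$, so the path score is $\sum_k\nabla_\Phi\ln\pi_k^\Phi(A_k|Z_k)$, and the resulting double sum collapses using the zero mean of the score (which kills the $t<k$ cross terms) and the tower property at $\Z_k$ (which identifies $\cQ_k^{\pi^\Phi}$ for $t\geq k$). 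Both arguments ultimately rest on the same structural fact---the kernels $\cP,\phi$ and the initial law are $\Phi$-free---but you extract it from the factorization of the path density rather than from the belief recursion, which makes your derivation more self-contained: in your route Proposition \ref{prop:cond-independence} is not actually needed, since every conditional expectation is taken under the single measure $P_T^{\pi^\Phi,\mu}$ and $\cQ_k^{\pi^\Phi}$ is by definition the conditional expectation of the discounted tail given $\Z_k$ under that measure. Your approach also yields the baseline/advantage form of the gradient as a by-product of the same zero-mean computation, which the paper establishes separately. You are, in addition, more careful than the paper about interchanging $\nabla_\Phi$ with the infinite horizon; note only that the passage $T\to\infty$ requires convergence of the gradient series $\sum_t\gamma^t\sup_{z_t,a}\|\nabla_\Phi\ln\pi_t^\Phi(a|z_t)\|_2\lesssim\sum_t\gamma^t L_t$, not merely of the reward tail---a condition that can fail in the long-memory regime $\alpha_m\varrho_1>1$ and that the paper's unrolled recursion implicitly assumes as well.
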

\begin{proof}[Proof of Prop. \ref{prop:pgt-pomdp}]
For any $t\in\bN$, we have
\begin{equation}
    \cV_t^{\pi^\Phi}(z_t) = \sum_{a_t}\pi_t^\Phi(a_t|z_t)\cQ_t^{\pi^\Phi}(z_t,a_t),
\end{equation}
by Prop. \ref{prop:bellman}. Thus, we obtain
\begin{align}
    \nonumber \nabla\cV_t^{\pi^\Phi}(z_t) &= \sum_{a_t}\pi_t^\Phi(a_t|z_t)\nabla\ln\pi_t^\Phi(a_t|z_t)\cQ_t^{\pi^\Phi}(z_t,a_t) + \sum_{a_t}\pi_t^\Phi(a_t|z_t)\nabla\cQ_t^{\pi^\Phi}(z_t,a_t),\\
    &= \bE^{\pi^\Phi}[\nabla\ln\pi_t^\Phi(A_t|Z_t)\cQ_t^{\pi^\Phi}(Z_t,A_t) + \nabla\cQ_t^{\pi^\Phi}(Z_t,A_t)|Z_t=z_t].
    \label{eqn:pgt-a}
\end{align}
Now, note that
\begin{align*}
    \cQ_t^{\pi^\Phi}(z_t,a_t) &= \bE[r(S_t,A_t) + \gamma\cV_{t+1}^{\pi^\Phi}(Z_{t+1})|\Z_t=(z_t,a_t)],\\
    &= \sum_{s_t}b_t(s_t)\LL(r(s_t,a_t) + \gamma\sum_{s_{t+1}}\cP(s_{t+1}|s_t,a_t)\sum_{y_{t+1}}\phi(y_{t+1}|s_{t+1})\cV_{t+1}^{\pi^\Phi}(z_{t+1})\RR),
\end{align*}
where $z_{t+1} = (z_t,a_t,y_{t+1})$. As a consequence of Prop. \ref{prop:cond-independence}, we have $\nabla_\Phi \sum_{s_t}b_t(s_t)r(s_t,a_t) = 0,$ and also 
\begin{align*}
\nabla_\Phi\cQ_t^{\pi^\Phi}(z_t,a_t) &= \gamma\sum_{s_t}b_t(s_t)\sum_{s_{t+1}}\cP(s_{t+1}|s_t,a_t)\sum_{y_{t+1}}\phi(y_{t+1}|s_{t+1})\nabla_\Phi\cV_{t+1}^{\pi^\Phi}(z_{t+1}),\\
&= \gamma\bE[\nabla\ln\pi_{t+1}^\Phi(A_{t+1}|Z_{t+1})\cQ_{t+1}^{\pi^\Phi}(Z_{t+1},A_{t+1})+\nabla_\Phi\cQ_{t+1}^{\pi^\Phi}(Z_{t+1},A_{t+1})|\Z_t=(z_t,a_t)],\\
&= \gamma\bE^{\pi^\Phi}\Big[\sum_{k=t+1}^\infty\gamma^{k-t-1}\nabla_\Phi\ln\pi_k^\Phi(A_k|Z_k)\cQ_k^{\pi^\Phi}(Z_k,A_k)\Big|\Z_t=(z_t,a_t)\Big].
\end{align*}
Using the above recursive formula for $\nabla_\Phi\cQ_t^{\pi^\Phi}$ along with the law of iterated expectations in \eqref{eqn:pgt-a}, we obtain
\begin{equation}
    \nabla_\Phi\cV_t^{\pi^\Phi}(z_t) = \bE^{\pi^\Phi}\Big[\sum_{k=t}^\infty\gamma^{k-t}\nabla_\Phi\ln\pi_k^\Phi(A_k|Z_k)\cQ_k^{\pi^\Phi}(Z_k,A_k)\Big|Z_t=z_t\Big].
\end{equation}
Since we have $\cV^\pi := \cV_0^\pi$, and also $\nabla_\Phi\cV^{\pi^\Phi}(\mu)=\nabla_\Phi\sum_{z_0}\mu(z_0)\cV^{\pi^\Phi}(z_0)=\sum_{z_0}\mu(z_0)\nabla_\Phi\cV^{\pi^\Phi}(z_0)$ by the linearity of gradient, we conclude the proof. 

\textbf{Note on the baseline. }Similar to the case of fully-observable MDPs, adding a baseline $q_t^{\pi^\Phi}(z_t)$ to the $\cQ$-function does not change the policy gradients since $\sum_a\pi_t(a|z_t)\nabla\ln\pi_t^\Phi(a|z_t)q_t^{\pi^\Phi}(z_t)=q_t^{\pi^\Phi}(z_t)\sum_a\nabla\pi_t^\Phi(a|z_t) = q_t^{\pi^\Phi}(z_t)\nabla\sum_a\pi_t^\Phi(a|z_t) = 0$. Thus, we also have
\begin{equation}
        \nabla_\Phi\cV^{\pi^\Phi}(\mu) = \bE_\mu^{\pi^\Phi}\LL[\sum_{t=0}^\infty\gamma^t \cA_t^{\pi^\Phi}(Z_t,A_t) \nabla_\Phi\ln\pi_t^\Phi(A_t|Z_t)\RR],
    \end{equation}
    which uses $q_t^{\pi^\Phi}=\cV_t^{\pi^\Phi}$ as the baseline, akin to the fully-observable case.
\end{proof}

The following result extends the compatible function approximation theorem in \cite{kakade2001natural} to POMDPs.
\begin{proof}[Proof of Prop. \ref{prop:cfa}]
    The proof is identical to \cite{kakade2001natural}. By first-order condition for optimality, we have
    \begin{equation*}
        2\bE_\mu^{\pi^\Phi}\sum_{t=0}^\infty\gamma^t\nabla\ln\pi_t^\Phi(A_t|Z_t)\LL(\nabla^\top\ln\pi_t^\Phi(A_t|Z_t)\omega^\star-\cA_t^{\pi^\Phi}(\Z_t)\RR)=2\LL(G_\mu(\Phi)\omega^\star-\nabla_\Phi\cV^{\pi^\Phi}(\mu)\RR) = 0,
    \end{equation*}
    which concludes the proof.
\end{proof}

\section{Theoretical Analysis of Rec-NPG}
First, we prove structural results for RNNs in the kernel regime, which will be key in the analysis later.
\subsection{Log-Linearization of \textsc{SoftMax} Policies Parameterized by RNNs}
The key idea behind the neural tangent kernel (NTK) analysis is linearization around the random initialization. To that end, let
\begin{equation}
    F_t^\msf{Lin}(\z_t;\Theta) := \langle \nabla F_t(\z_t;\Theta(0)),\Theta-\Theta(0)\rangle,
\end{equation}
for any $\Theta\in\bR^{m(d+1)}$. We define the log-linearized policy as follows:
\begin{equation}
    \tilde{\pi}_t^\Phi(a|z_t) := \frac{\exp(F_t^\msf{Lin}(z_t,a;\Phi))}{\sum_{a'\in\bA}\exp(F_t^\msf{Lin}(z_t,a';\Phi))},~t\in\bN.
\end{equation}
The first result bounds the Kullback-Leibler divergence between $\pi_t^\Phi$ and its log-linearized version $\tilde{\pi}_t^\Phi$. In the case of FNNs with ReLU activation functions, a similar result was presented in \cite{cayci2024finitetime}. The following result extends this idea to (i) RNNs, and (ii) smooth activation functions.
\begin{proposition}[Log-linearization error]\label{prop:log-linearization}
    For any $t\in\bN$ and $(z_t,a)\in(\bY\times\bA)^{t+1}$, we have
    \begin{equation}
        \sup_{(z_t,a)\in(\bY\times\bA)^{t+1}}\LL|\ln\frac{\tilde{\pi}_t^\Phi(a|z_t)}{\pi_t^\Phi(a|z_t)}\RR| \leq \frac{6}{\sqrt{m}}\LL(\Lambda_t^2\varrho_2+\chi_t\varrho_1\RR)\|\Phi-\Phi(0)\|_2^2,
    \end{equation}
    for any $t\in\bN$. Consequently, we have $\pi_t(\cdot|z_t)\ll \tilde{\pi}_t(\cdot|z_t)$ and $\tilde{\pi}_t(\cdot|z_t)\ll \pi_t(\cdot|z_t)$, and
    \begin{equation}
        \max\LL\{\mathscr{D}_\msf{KL}(\pi_t^\Phi(\cdot|z_t)\|\tilde{\pi}_t^\Phi(\cdot|z_t)),\cD_\msf{KL}(\tilde{\pi}_t^\Phi(\cdot|z_t)\|{\pi}_t^\Phi(\cdot|z_t))\RR\}\leq \frac{6}{\sqrt{m}}\LL(\Lambda_t^2\varrho_2+\chi_t\varrho_1\RR)\|\Phi-\Phi(0)\|_2^2,
    \end{equation}
    for all $z_t\in(\bY\times\bA)^{t+1}$ and $t\in\bN$.
\end{proposition}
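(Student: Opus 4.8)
The plan is to reduce everything to the uniform linearization error bound \eqref{eqn:linearization-error} from Lemma \ref{lemma:prelim}. Fix $t\in\bN$ and a history $z_t$, and abbreviate $F(a):=F_t((z_t,a);\Phi)$, $F^\msf{Lin}(a):=F_t^\msf{Lin}((z_t,a);\Phi)$, and $g(a):=F^\msf{Lin}(a)-F(a)$ for the per-action linearization error. Directly from the definitions of the two \textsc{SoftMax} policies,
$$\ln\frac{\tilde{\pi}_t^\Phi(a|z_t)}{\pi_t^\Phi(a|z_t)} = g(a) - \ln\frac{\sum_{a'\in\bA}\exp(F^\msf{Lin}(a'))}{\sum_{a'\in\bA}\exp(F(a'))},$$
so the log-ratio splits into a per-action term $g(a)$ and a log-partition-function discrepancy that does not depend on $a$.

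First I would control both pieces by the single quantity $\epsilon_t := \frac{2}{\sqrt{m}}(\varrho_2\Lambda_t^2+\varrho_1\chi_t)\|\Phi-\Phi(0)\|_2^2$, which by \eqref{eqn:linearization-error} satisfies $\sup_{a\in\bA}|g(a)|\leq \epsilon_t$ uniformly (the bound in Lemma \ref{lemma:prelim} is over all sequences, hence over every action extension of $z_t$). The per-action term is then immediately $|g(a)|\leq\epsilon_t$. For the log-partition term I would factor $\exp(F^\msf{Lin}(a'))=\exp(F(a'))\exp(g(a'))$ inside the numerator; since $e^{-\epsilon_t}\leq e^{g(a')}\leq e^{\epsilon_t}$ for every $a'$, the ratio of partition functions is sandwiched, $e^{-\epsilon_t}\leq \frac{\sum_{a'}\exp(F^\msf{Lin}(a'))}{\sum_{a'}\exp(F(a'))}\leq e^{\epsilon_t}$, so the absolute value of its logarithm is at most $\epsilon_t$. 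Combining the two pieces gives $|\ln(\tilde{\pi}_t^\Phi/\pi_t^\Phi)|\leq 2\epsilon_t=\frac{4}{\sqrt{m}}(\varrho_2\Lambda_t^2+\varrho_1\chi_t)\|\Phi-\Phi(0)\|_2^2$, which is comfortably within the claimed bound $\frac{6}{\sqrt{m}}(\Lambda_t^2\varrho_2+\chi_t\varrho_1)\|\Phi-\Phi(0)\|_2^2$, leaving slack in the constant.

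For the KL statement I would first note that both $\pi_t^\Phi(\cdot|z_t)$ and $\tilde{\pi}_t^\Phi(\cdot|z_t)$ are \textsc{SoftMax} distributions, hence strictly positive on $\bA$, so they are mutually absolutely continuous. Each KL divergence is a convex combination over $a$ of the pointwise log-ratios $\ln\frac{\pi_t^\Phi(a|z_t)}{\tilde{\pi}_t^\Phi(a|z_t)}$ (or its reciprocal), and each such log-ratio is bounded in absolute value by the sup bound just established. Since $\sum_{a}\pi_t^\Phi(a|z_t)=\sum_a\tilde{\pi}_t^\Phi(a|z_t)=1$, both $\mathscr{D}_\msf{KL}(\pi_t^\Phi\|\tilde{\pi}_t^\Phi)$ and $\mathscr{D}_\msf{KL}(\tilde{\pi}_t^\Phi\|\pi_t^\Phi)$ inherit the same upper bound $\frac{6}{\sqrt{m}}(\Lambda_t^2\varrho_2+\chi_t\varrho_1)\|\Phi-\Phi(0)\|_2^2$.

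The argument is essentially self-contained once \eqref{eqn:linearization-error} is available, so there is no serious obstacle. The only point requiring care is the normalization (log-partition) term: it cannot be bounded action-by-action but must be handled by the uniform-over-actions sandwiching argument above, which is exactly where the fact that \eqref{eqn:linearization-error} holds for \emph{all} sequences (hence uniformly in the action coordinate) is essential.
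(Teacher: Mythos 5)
Your proof is correct. It uses the same decomposition as the paper — splitting $\ln\bigl(\tilde{\pi}_t^\Phi(a|z_t)/\pi_t^\Phi(a|z_t)\bigr)$ into the per-action linearization error $F_t^\msf{Lin}-F_t$ and the log-partition discrepancy — and the same key ingredient, namely the uniform bound \eqref{eqn:linearization-error} from Lemma \ref{lemma:prelim}. The only genuine difference is in how the normalization term is handled: the paper invokes the log-sum inequality to bound $\bigl|\ln\bigl(\sum_a e^{F_t^\msf{Lin}}/\sum_a e^{F_t}\bigr)\bigr|$ by $\sum_a(\tilde{\pi}_t^\Phi(a|z_t)+\pi_t^\Phi(a|z_t))\,|F_t^\msf{Lin}-F_t|$, which together with the per-action term gives the factor $1+\tilde{\pi}_t^\Phi+\pi_t^\Phi\leq 3$ and hence the constant $6$; you instead sandwich the ratio of partition functions between $e^{-\epsilon_t}$ and $e^{\epsilon_t}$ directly, which is more elementary and yields the sharper constant $4$ in place of $6$ (still consistent with the stated bound). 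Your observation that \eqref{eqn:linearization-error} holds uniformly over all input sequences, and hence over all action extensions of $z_t$, is exactly the point that makes the sandwich legitimate. The absolute-continuity and KL steps (positivity of \textsc{SoftMax} and bounding each KL divergence by the supremum of the pointwise log-ratios) match the paper's implicit argument. No gaps.
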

\begin{proof}
    Fix $(z_t,a)\in(\bY\times\bA)^{t+1}$. By the log-sum inequality \cite{cover2006elements}, we have
    \begin{equation*}
        \ln\frac{\sum_a\exp(F_t^\msf{Lin}(z_t,a;\Phi))}{\sum_a\exp(F_t(z_t,a;\Phi))} \leq \sum_{a\in\bA}\tilde{\pi}_t^\Phi(a|z_t)\LL(F_t^\msf{Lin}(z_t,a;\Phi)-F_t(z_t,a;\Phi)\RR).
    \end{equation*}
    Using the same argument, we obtain
    \begin{equation}
        \LL|\ln\frac{\sum_a\exp(F_t^\msf{Lin}(z_t,a;\Phi))}{\sum_a\exp(F_t(z_t,a;\Phi))}\RR| \leq \sum_{a\in\bA}\LL(\tilde{\pi}_t^\Phi(a|z_t)+{\pi}_t^\Phi(a|z_t)\RR)\cdot \LL|F_t^\msf{Lin}(z_t,a;\Phi)-F_t(z_t,a;\Phi)\RR|.
    \end{equation}
    Thus, we have
    \begin{equation*}
        \LL|\ln\frac{\tilde{\pi}_t^\Phi(a|z_t)}{\pi_t^\Phi(a|z_t)}\RR| \leq (1+\tilde{\pi}_t^\Phi(a|z_t)+{\pi}_t^\Phi(a|z_t))\LL|F_t^\msf{Lin}(z_t,a;\Phi)-F_t(z_t,a;\Phi)\RR|.
    \end{equation*}
    By using Lemma \ref{lemma:prelim}, we have $\sup_{\z_t\in(\bY\times\bA)^{t+1}}\LL|F_t^\msf{Lin}(\z_t';\Phi)-F_t(\z_t';\Phi)\RR|\leq \frac{2}{\sqrt{m}}(\Lambda_t^2\varrho_2+\chi_t\varrho_1)\|\Phi-\Phi(0)\|_2^2$. By using the last two inequalities together, and noting that $1+\tilde{\pi}_t^\Phi(a|z_t)+{\pi}_t^\Phi(a|z_t)\leq 3$, we conclude that
    \begin{equation*}
        \LL|\ln\frac{\tilde{\pi}_t^\Phi(a|z_t)}{\pi_t^\Phi(a|z_t)}\RR| \leq \frac{6}{\sqrt{m}}(\Lambda_t^2\varrho_2+\chi_t\varrho_1)\|\Phi-\Phi(0)\|_2^2.
    \end{equation*}
    Since the right-hand side of the above inequality is independent of $(z_t,a)$, we deduce that the result holds for all $(z_t,a)$, thus concluding the proof.
\end{proof}
The following result will be important in establishing the Lyapunov drift analysis of Rec-NPG.
\begin{proposition}[Smoothness of $\ln\tilde{\pi}_t^\Phi(a|z_t)$]
    For any $t\in\bN$, we have
    \begin{equation*}
        \sup_{(z_t,a)\in(\bY\times\bA)^{t+1}}\|\nabla\ln\tilde{\pi}_t^\Phi(a|z_t)-\nabla\ln\tilde{\pi}_t^{\Phi'}(a|z_t)\|_2 \leq L_t^2\|\Phi-\Phi'\|_2,
    \end{equation*}
    for any $\Phi,\Phi'\in\bR^{m(d+1)}$.
    \label{prop:smoothness}
\end{proposition}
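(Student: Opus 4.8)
The plan is to exploit the fact that $F_t^\msf{Lin}(z_t,a;\Phi)=\langle\nabla F_t(z_t,a;\Phi(0)),\Phi-\Phi(0)\rangle$ is \emph{linear} in $\Phi$, so the feature vector $g_a:=\nabla F_t(z_t,a;\Phi(0))$ is constant in $\Phi$. First I would write the score of the log-linearized policy. Since $\ln\tilde\pi_t^\Phi(a|z_t)=F_t^\msf{Lin}(z_t,a;\Phi)-\ln\sum_{a'\in\bA}\exp(F_t^\msf{Lin}(z_t,a';\Phi))$, differentiating gives
\begin{equation*}
\nabla\ln\tilde\pi_t^\Phi(a|z_t)=g_a-\sum_{a'\in\bA}\tilde\pi_t^\Phi(a'|z_t)\,g_{a'}=g_a-\bar g(\Phi),
\end{equation*}
where $\bar g(\Phi):=\sum_{a'}\tilde\pi_t^\Phi(a'|z_t)g_{a'}$ is the policy-averaged feature. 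The key observation is that $g_a$ does not depend on $\Phi$, so it cancels in the difference:
\begin{equation*}
\nabla\ln\tilde\pi_t^\Phi(a|z_t)-\nabla\ln\tilde\pi_t^{\Phi'}(a|z_t)=\bar g(\Phi')-\bar g(\Phi),
\end{equation*}
which is moreover independent of $a$. Thus it suffices to prove that $\Phi\mapsto\bar g(\Phi)$ is $L_t^2$-Lipschitz.

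Next I would compute the Jacobian of $\bar g$. Using $\partial F_t^\msf{Lin}(z_t,a';\Phi)/\partial\Phi=g_{a'}$ together with the standard softmax derivative $\partial\tilde\pi_t^\Phi(a'|z_t)/\partial\Phi=\tilde\pi_t^\Phi(a'|z_t)(g_{a'}-\bar g(\Phi))$, the chain rule yields
\begin{equation*}
\nabla_\Phi\bar g(\Phi)=\sum_{a'\in\bA}\tilde\pi_t^\Phi(a'|z_t)\,g_{a'}g_{a'}^\top-\bar g(\Phi)\bar g(\Phi)^\top=\Cov_{\tilde\pi_t^\Phi}(g),
\end{equation*}
i.e. the covariance matrix of the random feature $g_A$ with $A\sim\tilde\pi_t^\Phi(\cdot|z_t)$. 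This matrix is symmetric positive semidefinite, so its operator norm equals $\max_{\|v\|_2=1}\Var_{\tilde\pi_t^\Phi}(v^\top g)$. Since $\Var(v^\top g)\le\bE[(v^\top g)^2]\le\max_{a'}\|g_{a'}\|_2^2$ for unit $v$, and Lemma \ref{lemma:prelim} gives the uniform bound $\|g_{a'}\|_2=\|\nabla F_t(z_t,a';\Phi(0))\|_2\le L_t$ (as $\Phi(0)\in\Omega_{\rho,m}$), I obtain $\|\nabla_\Phi\bar g(\Phi)\|_{\mathrm{op}}\le L_t^2$ uniformly in $\Phi$ and $(z_t,a)$.

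Finally, integrating the Jacobian bound along the segment from $\Phi'$ to $\Phi$ (mean-value inequality) gives $\|\bar g(\Phi)-\bar g(\Phi')\|_2\le L_t^2\|\Phi-\Phi'\|_2$, and combining with the cancellation identity above proves the claim, uniformly over $(z_t,a)\in(\bY\times\bA)^{t+1}$. I expect the only (mild) obstacle to be the covariance-operator-norm step: one must correctly identify the Jacobian of the softmax-averaged feature as a covariance and invoke the almost-sure feature bound $\|g_{a'}\|_2\le L_t$ from Lemma \ref{lemma:prelim}. The linearity of $F_t^\msf{Lin}$, which kills the Hessian contribution and makes $g_a$ constant across $\Phi$, is precisely what keeps the constant at $L_t^2$ and avoids any dependence on the higher-order smoothness quantities $\Lambda_t,\chi_t$ that appear in Proposition \ref{prop:log-linearization}.
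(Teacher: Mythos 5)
Your proposal is correct and takes essentially the same route as the paper: the paper invokes the standard fact (citing Agarwal et al.) that a log-linear policy with features bounded by $B$ has $B^2$-Lipschitz score, and then verifies the feature bound $\|\nabla_\Phi F_t(\bar z_t;\Phi(0))\|_2\leq L_t$ via Lemma \ref{lemma:prelim}, exactly as you do. The only difference is that you additionally supply the proof of that cited fact (the cancellation of $g_a$, the covariance Jacobian, and the operator-norm bound), which is a correct and complete derivation of the step the paper outsources to a reference.
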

\begin{proof}
    Consider a general log-linear parameterization $$p_\theta(x)\propto\exp(\phi_x^\top \theta),~x\in\X.$$ Then, if $\sup_{x\in\X}\|\phi_x\|_2\leq B < \infty$, then $\theta\mapsto \ln p_\theta(x)$ has $B^2$-Lipschitz continuous gradients for each $x\in\X$ \cite{agarwal2020optimality}. The remaining part is to prove a uniform upper bound for $\|\nabla_\Phi F_t(\z_t;\Phi(0))\|_2$. To that end, notice that
    \begin{equation*}
        \nabla_{\Phi_i}F_t(\z_t;\Phi(0)) = \frac{1}{\sqrt{m}}c_i\nabla H_t\ui{i}(\z_t;\Phi(0)),~\z_t\in(\bY\times\bA)^{t+1},i\in[m].
    \end{equation*}
    From the local Lipschitz continuity result in Lemma \ref{lemma:prelim}, we have $\sup_{\z_t:\max_{j\leq t}\|(y_j,a_j)\|_2\leq 1}\|\nabla_{\Phi_i}H_t\ui{i}(\z_t;\Phi(0))\|_2\leq L_t$ for any $i\in[m]$. Thus, for any $\z_t$, we have
    \begin{equation}
        \|\nabla_\Phi F_t(\z_t;\Phi(0))\|_2^2 = \frac{1}{m}\sum_{i=1}^m\|\nabla_{\Phi_i} H_t\ui{i}(\z_t;\Phi(0))\|_2^2 \leq L_t^2.
        \label{eqn:lipschitz-f}
    \end{equation}
\end{proof}

\subsection{Theoretical Analysis of Rec-NPG}
For any $\pi\in\Pi_\msf{NM}$, we define the potential function as
\begin{equation}
    \cL(\pi) := \bE_\mu^{\pi^\star}\LL[\sum_{t=0}^{T-1}\gamma^t\mathscr{D}_\msf{KL}\LL(\pi_t^\star(\cdot|Z_t)\|\pi_t(\cdot|Z_t)\RR)\RR].
\end{equation}
Then, we have the following drift inequality.
\begin{proposition}[Drift inequality]\label{prop:drift}
    For any $n\in\bN$, the drift can be bounded as follows:
    \begin{align*}
        \scL(\pol{n+1})-\scL&(\pol{n}) \leq -\eta_\msf{npg}(\cV^{\pi^\star}(\mu)-\cV^{\pol{n}}(\mu)) \underbrace{- \eta_\msf{npg}\bE_\mu^{\pi^\star}\LL[\sum_{t=0}^{T-1}\gamma^t\LL(\nabla^\top\ln{\pi_t^{\Phi(n)}}(A_t|Z_t)\omega_n-\cA_t^{\pol{n}}(\Z_t)\RR)\RR]}_{\circled{1}}\\
        &+\underbrace{\eta_\msf{npg}\bE_\mu^{\pi^\star}\sum_{t=T}^\infty\gamma^t\cA_t^\pol{n}(\Z_t)}_{\circled{2}} \underbrace{- \eta_\msf{npg}\bE_\mu^{\pi^\star}\sum_{t=0}^{T-1}\gamma^t\LL(\nabla\ln\tilde{\pi}_t^{\Phi(n)}(A_t|Z_t)-\nabla\ln{\pi_t^{\Phi(n)}}(A_t|Z_t)\RR)^\top \omega_n}_{\circled{3}}\\
        &+ \frac{1}{2}\eta_\msf{npg}^2\|\rho\|_2^2\sum_{t=0}^{T-1}\gamma^tL_t^2 + \frac{12\|\rho\|_2^2}{\sqrt{m}}\sum_{t=0}^{T-1}\gamma^t(\Lambda_t^2\varrho_2+\chi_t\varrho_1).
    \end{align*}
\end{proposition}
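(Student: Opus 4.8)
The plan is to track the Lyapunov potential $\scL(\pol{n})$ and bound its one-step change by decomposing the per-history Kullback--Leibler drift. Fix $t<T$ and a history $z_t$; the contribution to $\scL(\pol{n+1})-\scL(\pol{n})$ is $\sum_{a}\pi_t^\star(a|z_t)\ln\frac{\pi_t^{\Phi(n)}(a|z_t)}{\pi_t^{\Phi(n+1)}(a|z_t)}$, which I would rewrite by inserting the log-linearized policies $\tilde\pi_t^{\Phi(n)},\tilde\pi_t^{\Phi(n+1)}$ as a bridge. The true-versus-linearized discrepancy at $\Phi(n)$ and $\Phi(n+1)$ is controlled directly by Proposition~\ref{prop:log-linearization}: since both iterates lie in $\Omega_{\rho,m}$ we have $\|\Phi(\cdot)-\Phi(0)\|_2\le\|\rho\|_2$, so each discrepancy is at most $\frac{6}{\sqrt m}(\Lambda_t^2\varrho_2+\chi_t\varrho_1)\|\rho\|_2^2$. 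Summing the two contributions and applying $\bE_\mu^{\pi^\star}\sum_t\gamma^t$ produces exactly the final term $\frac{12\|\rho\|_2^2}{\sqrt m}\sum_{t<T}\gamma^t(\Lambda_t^2\varrho_2+\chi_t\varrho_1)$.

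It then remains to handle the drift of the log-linearized KL term, where the analysis is clean because $\Phi\mapsto\ln\tilde\pi_t^\Phi(a|z_t)$ is an exponential family with \emph{fixed} features $\phi_a:=\nabla F_t(z_t,a;\Phi(0))$. Writing $\Phi(n+1)-\Phi(n)=\eta_\msf{npg}\omega_n$, the linearized drift equals $-\eta_\msf{npg}\sum_a\pi_t^\star(a|z_t)\langle\phi_a,\omega_n\rangle+\ln\frac{Z_t(\Phi(n+1))}{Z_t(\Phi(n))}$, where $Z_t(\Phi)$ is the associated log-partition function. I would Taylor-expand $\ln Z_t$ to second order: its gradient is the mean feature $\bE_{a\sim\tilde\pi_t^{\Phi(n)}}[\phi_a]$ and its Hessian is the feature covariance, whose operator norm is bounded by $\sup_a\|\phi_a\|_2^2\le L_t^2$ via \eqref{eqn:lipschitz-f}. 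The resulting first-order term combines with $-\eta_\msf{npg}\sum_a\pi_t^\star(a)\langle\phi_a,\omega_n\rangle$ into $-\eta_\msf{npg}\sum_a\pi_t^\star(a|z_t)\langle\nabla\ln\tilde\pi_t^{\Phi(n)}(a|z_t),\omega_n\rangle$, since $\nabla\ln\tilde\pi_t^\Phi(a|z_t)=\phi_a-\bE_{a'\sim\tilde\pi_t^\Phi}[\phi_{a'}]$, leaving a quadratic remainder $\le\frac12\eta_\msf{npg}^2L_t^2\|\omega_n\|_2^2\le\frac12\eta_\msf{npg}^2L_t^2\|\rho\|_2^2$, which after $\bE_\mu^{\pi^\star}\sum_t\gamma^t$ matches the term $\frac12\eta_\msf{npg}^2\|\rho\|_2^2\sum_{t<T}\gamma^tL_t^2$.

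The final step converts this linearized gradient into the true advantage. First I would replace $\nabla\ln\tilde\pi_t^{\Phi(n)}$ by $\nabla\ln\pi_t^{\Phi(n)}$, the residual being $-\eta_\msf{npg}\langle\nabla\ln\tilde\pi_t^{\Phi(n)}-\nabla\ln\pi_t^{\Phi(n)},\omega_n\rangle$, which is precisely $\circled{3}$ after $\bE_\mu^{\pi^\star}\sum_t\gamma^t$. Next I would add and subtract $\cA_t^{\pol{n}}$: the piece $-\eta_\msf{npg}(\nabla^\top\ln\pi_t^{\Phi(n)}(A_t|Z_t)\omega_n-\cA_t^{\pol{n}}(\Z_t))$ is $\circled{1}$, while $-\eta_\msf{npg}\bE_\mu^{\pi^\star}\sum_{t<T}\gamma^t\cA_t^{\pol{n}}(\Z_t)$ is completed to the infinite sum using the non-stationary performance-difference identity $\bE_\mu^{\pi^\star}\sum_{t\ge0}\gamma^t\cA_t^{\pol{n}}(\Z_t)=\cV^{\pi^\star}(\mu)-\cV^{\pol{n}}(\mu)$, which follows from Proposition~\ref{prop:cond-independence} and the non-stationary Bellman equation (Proposition~\ref{prop:bellman}). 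Subtracting the tail $\sum_{t\ge T}$ then reproduces the leading term $-\eta_\msf{npg}(\cV^{\pi^\star}(\mu)-\cV^{\pol{n}}(\mu))$ together with the truncation term $\circled{2}$.

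The main obstacle I expect is the second-order control of the log-partition function carried out jointly with the bridging argument: one must ensure the Hessian bound $\|\nabla^2\ln Z_t\|\le L_t^2$ holds uniformly over the segment joining $\Phi(n)$ and $\Phi(n+1)$ --- this is exactly where \eqref{eqn:lipschitz-f} and the max-norm constraints $\omega_n,\Phi(n)\in\Omega_{\rho,m}$ (hence $\|\omega_n\|_2\le\|\rho\|_2$) are essential --- while keeping the two distinct error sources, policy linearization (Proposition~\ref{prop:log-linearization}) and log-partition linearization (Proposition~\ref{prop:smoothness}), cleanly separated so that neither is double-counted. A secondary subtlety is establishing the \emph{truncated} performance-difference identity for non-stationary policies: the usual occupancy-measure telescoping must instead be executed path-wise, and the residual beyond horizon $T$ must be isolated as $\circled{2}$ rather than silently discarded.
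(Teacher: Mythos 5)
Your proposal is correct and follows essentially the same route as the paper's proof: the same log-linearized KL bridge with Proposition~\ref{prop:log-linearization} applied twice, the same second-order control of the linearized drift (your explicit log-partition Hessian bound is exactly how the paper proves Proposition~\ref{prop:smoothness}, and it yields the $L_t^2$ constant consistent with the stated result), and the same add-and-subtract of $\cA_t^{\pol{n}}$ combined with the pathwise performance-difference identity (Lemma~\ref{lemma:p-pdl}) to produce the leading term and the truncation term $\circled{2}$.
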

\begin{proof}
    First, note that the drift can be expressed as
    \begin{equation*}
        \scL(\pol{n+1})-\scL(\pol{n}) = \bE_\mu^{\pi^\star}\sum_{t=0}^{T-1}\gamma^t\sum_{a\in\bA}\pi_t^\star(A_t|Z_t)\ln\frac{{\pi_t^{\Phi(n)}}(A_t|Z_t)}{{\pi_t^{\Phi(n+1)}}(A_t|Z_t)}.
    \end{equation*}
    Then, with a log-linear transformation,
    \begin{equation*}
        \scL(\pol{n+1})-\scL(\pol{n}) = \bE_\mu^{\pi^\star}\sum_{t=0}^{T-1}\gamma^t\sum_{a\in\bA}\pi_t^\star(A_t|Z_t)\LL(\ln\frac{{\tilde{\pi}_t^{\Phi(n)}}(A_t|Z_t)}{{\tilde{\pi}_t^{\Phi(n+1)}}(A_t|Z_t)}+\ln\frac{{{\pi}_t^{\Phi(n)}}(A_t|Z_t)}{{\tilde{\pi}_t^{\Phi(n)}}(A_t|Z_t)}+\ln\frac{{\tilde{\pi}_t^{\Phi(n+1)}}(A_t|Z_t)}{{{\pi}_t^{\Phi(n+1)}}(A_t|Z_t)}\RR).
    \end{equation*}

By using the log-linearization bound in Prop. \ref{prop:log-linearization} twice in the above inequality, we obtain
\begin{equation}
    \scL(\pol{n+1})-\scL(\pol{n}) \leq \bE_\mu^{\pi^\star}\sum_{t=0}^{T-1}\gamma^t\sum_{a\in\bA}\pi_t^\star(A_t|Z_t)\ln\frac{{\tilde{\pi}_t^{\Phi(n)}}(A_t|Z_t)}{{\tilde{\pi}_t^{\Phi(n+1)}}(A_t|Z_t)}+\frac{12}{\sqrt{m}}\sum_{t=0}^{T-1}\gamma^t(\Lambda_t^2\varrho_2+\chi_t\varrho_1)\|\rho\|_2^2.
    \label{eqn:drift-a}
\end{equation}

By the smoothness result in Prop. \ref{prop:smoothness}, we have
\begin{equation*}
    |\ln\tilde{\pi}_t^{\Phi(n+1)}(a_t|z_t)-\ln\tilde{\pi}_t^{\Phi(n)}(a_t|z_t)-\nabla\ln\tilde{\pi}_t^{\Phi(n)}(a_t|z_t)(\Phi(n+1)-\Phi(n))|\leq \frac{1}{2}L_t^4\|\Phi(n+1)-\Phi(n)\|_2^2.
\end{equation*}
Thus, we obtain
$$
-\eta_\msf{npg}^2L_t^4\|\rho\|_2^2\leq -\eta_\msf{npg}^2L_t^4\|\omega_n\|_2^2 \leq -\ln\frac{\tilde{\pi}_t^{\Phi(n)}(a_t|z_t)}{\tilde{\pi}_t^{\Phi(n+1)}(a_t|z_t)}-\eta_\msf{npg}\nabla^\top\ln\tilde{\pi}_t^{\Phi(n)}(a_t|z_t)\omega_n,
$$
because of the max-norm gradient clipping that yields $\|\omega_n\|_2\leq \|\rho\|_2$ and $\Phi(n+1)=\Phi(n)+\eta_\msf{npg}\omega_n$ for any $n\in\bN$. Using this in \eqref{eqn:drift-a}, we get
\begin{equation}
    \scL(\pol{n+1})-\scL(\pol{n}) \leq -\eta_\msf{npg}\bE_\mu^{\pi^\star}\sum_{t=0}^{T-1}\gamma^t\nabla^\top\ln\tilde{\pi}_t^{\Phi(n)}(a_t|z_t)\omega_n+\frac{12}{\sqrt{m}}\sum_{t=0}^{T-1}\gamma^t(\Lambda_t^2\varrho_2+\chi_t\varrho_1)\|\rho\|_2^2+\frac{1}{2}\eta_\msf{npg}^2L_t^4\|\rho\|_2^2.
    \label{eqn:drift-b}
\end{equation}
An important technical result that will be useful in our analysis is the \textit{pathwise} performance difference lemma, which was originally developed in \cite{kakade2002approximately} for fully-observable MDPs.
\begin{lemma}[Pathwise Performance Difference Lemma]
    Let $\Phi,\Phi'\in\bR^{m(d+1)}$ be two parameters. Then, we have
    \begin{equation*}
        \cV^{\pi^{\Phi'}}(\mu)-\cV^{\pi^\Phi}(\mu) = \bE_\mu^{\pi^{\Phi'}}\sum_{t=0}^\infty\gamma^t\cA_t^{\pi^\Phi}(Z_t,A_t).
    \end{equation*}
    \label{lemma:p-pdl}
\end{lemma}
The proof of Lemma \ref{lemma:p-pdl} is an extension of \cite{agarwal2020optimality} to non-stationary policies, and can be found at the end of this subsection.

Using Lemma \ref{lemma:p-pdl} in \eqref{eqn:drift-b}, we obtain
\begin{multline}
    \scL(\pol{n+1})-\scL(\pol{n}) \leq -\eta_\msf{npg}(\cV^{\pi^\star}(\mu)-\cV^{\pi^{\Phi(n)}}(\mu)) -\eta_\msf{npg}\bE_\mu^{\pi^\star}\sum_{t=0}^{T-1}\gamma^t\LL(\nabla^\top\ln\tilde{\pi}_t^{\Phi(n)}(a_t|z_t)\omega_n-\cA_t^{\pi^{\Phi(n)}}(\Z_t)\RR)\\+\eta_\msf{npg}\bE_\mu^{\pi^\star}\sum_{t=T}^\infty\cA_t^\pol{n}(\Z_t)+\frac{12}{\sqrt{m}}\sum_{t=0}^{T-1}\gamma^t(\Lambda_t^2\varrho_2+\chi_t\varrho_1)\|\rho\|_2^2+\frac{1}{2}\eta_\msf{npg}^2L_t^4\|\rho\|_2^2.
    \label{eqn:drift-c}
\end{multline}
Finally, we replace the term $\nabla\ln\tilde{\pi}_t^{\Phi(n)}(a_t|z_t)$ with $\nabla\ln{\pi}_t^{\Phi(n)}(a_t|z_t)$ by including the corresponding error term, and conclude the proof by considering the telescoping sum, and noting that $\mathscr{L}(\pi^{\Phi(0)})=\log|\bA|$ since $F_t(\cdot;\Phi(0))=0$ by symmetric initialization.
\end{proof}

\begin{proof}[Proof of Theorem \ref{thm:npg}]
We prove Theorem \ref{thm:npg} by bounding the numbered terms in Prop. \ref{prop:drift}.

\textbf{Bounding $\circled{1}$ in Prop. \ref{prop:drift}.} Recall that $p_T(\gamma) = \sum_{t<T}\gamma^t$. Then, by using Jensen's inequality,
\begin{align*}
    \bE_\mu^{\pi^\star}\sum_{t=0}^{T-1}\gamma^t\LL(\nabla^\top\ln{\pi_t^{\Phi(n)}}(A_t|Z_t)\omega_n-\cA_t^{\pol{n}}(\Z_t)\RR) &\leq \sqrt{p_T(\gamma)\bE_\mu^{\pi^\star}\sum_{t=0}^{T-1}\gamma^t\LL|\nabla^\top\ln{\pi_t^{\Phi(n)}}(A_t|Z_t)\omega_n-\cA_t^{\pol{n}}(\Z_t)\RR|^2},\\
    &=: \sqrt{p_T(\gamma)}\sqrt{\kappa\varepsilon_\msf{cfa}^T(\Phi(n), \omega_n)},
\end{align*}
where $\kappa$ yields a change-of-measure argument from $P_T^{\pi^\star,\mu}$ to $P_T^{\pol{n},\mu}$.

\textbf{Bounding $\circled{2}$ in Prop. \ref{prop:drift}.} $\sup_{s,a}|r(s,a)|\leq r_\infty$, therefore $|\cA_t^{\pi}(\z_t)|\leq \frac{2r_\infty}{1-\gamma}$ for any $t\in\bN,\z_t\in(\bY\times\bA)^{t+1},\mbox{~and~}\pi\in\Pi_\msf{NM}$.

\textbf{Bounding $\circled{3}$ in Prop. \ref{prop:drift}.} For any $t\in\bN$, Cauchy-Schwarz inequality implies
\begin{equation*}
    \LL(\nabla\ln\tilde{\pi}_t^{\Phi(n)}(a_t|z_t)-\nabla\ln{\pi_t^{\Phi(n)}}(a_t|z_t)\RR)^\top \omega_n \leq \|\nabla\ln\tilde{\pi}_t^{\Phi(n)}(a_t|z_t)-\nabla\ln{\pi_t^{\Phi(n)}}(a_t|z_t)\|_2\|\rho\|_2.
\end{equation*}
Recall that
\begin{align*}
    \nabla\ln\tilde{\pi}_t^\Phi(a_t|z_t) &= \nabla F_t(z_t,a_t;\Phi(0))-\sum_{a'}\tilde{\pi}_t^\Phi(a'|z_t)\nabla F_t(z_t,a';\Phi(0)),\\
    \nabla\ln{\pi}_t^\Phi(a_t|z_t) &= \nabla F_t(z_t,a_t;\Phi)-\sum_{a'}{\pi}_t^\Phi(a'|z_t)\nabla F_t(z_t,a';\Phi).
\end{align*}
First, from local $\beta_t$-Lipschitzness of $\Phi_i\mapsto\nabla H_t\ui{i}(\z_t;\Phi_i)$ for $\Phi\in\Omega_{\rho,m}$ by Lemma \ref{lemma:prelim}, we have 
\begin{align*}
    \|\nabla_{\Phi_i} F_t(\z_t;\Phi(n))-\nabla_{\Phi_i} F_t(\z_t;\Phi(0))\|_2 &= \frac{1}{\sqrt{m}}\|\nabla_{\Phi_i} H_t\ui{i}(\z_t;\Phi_i(n))-\nabla_{\Phi_i} H_t\ui{i}(\z_t;\Phi_i(0))\|_2,\\
    &\leq \frac{\beta_t\|\rho\|_2}{m},
\end{align*}
for any $n\in\bN$ since $\max_i\|\Phi_i(n)-\Phi_i(0)\|_2\leq \frac{\|\rho\|_2}{\sqrt{m}}$ by max-norm projection. Thus, 
\begin{equation}
    \|\nabla_{\Phi} F_t(\z_t;\Phi(n))-\nabla_{\Phi} F_t(\z_t;\Phi(0))\|_2\leq \frac{\beta_t\|\rho\|_2}{\sqrt{m}},~t\in\bN.
    \label{eqn:smoothness-f}
\end{equation} 
Thus,
\begin{align*}
    \|\nabla\ln\tilde{\pi}_t^{\Phi(n)}(a_t|z_t)-\nabla\ln{\pi_t^{\Phi(n)}}(a_t|z_t)\|_2 &\leq \frac{\beta_t\|\rho\|_2}{\sqrt{m}} + \sum_{a}|\pi_t^{\Phi(n)}(a|z_t)-\tilde{\pi}_t^{\Phi(n)}(a|z_t)|\|\nabla F_t(\z_t;\Phi(0))\|_2\\
    &+ \sum_{a}\pi_t^{\Phi(n)}(a|z_t)\|\nabla F_t(z_t,a;\Phi(n))-\nabla F_t(z_t,a;\Phi(0))\|_2.
\end{align*}
From \eqref{eqn:lipschitz-f}, we have
\begin{align*}
    \|\nabla\ln\tilde{\pi}_t^{\Phi(n)}(a_t|z_t)-\nabla\ln{\pi_t^{\Phi(n)}}(a_t|z_t)\|_2 &\leq \frac{2\beta_t\|\rho\|_2}{\sqrt{m}} + 2L_t\mathscr{D}_\msf{TV}\LL(\pi_t^{\Phi(n)}(\cdot|z_t)\|\tilde{\pi}_t^{\Phi(n)}(\cdot|z_t)\RR),
\end{align*}
where $\mathscr{D}_\msf{TV}$ denotes the total-variation distance between two probability measures. By Pinsker's inequality \cite{cover2006elements}, we obtain
\begin{equation}
    \|\nabla\ln\tilde{\pi}_t^{\Phi(n)}(a_t|z_t)-\nabla\ln{\pi_t^{\Phi(n)}}(a_t|z_t)\|_2 \leq \frac{2\beta_t\|\rho\|_2}{\sqrt{m}} + \sqrt{2}L_t\sqrt{\mathscr{D}_\msf{KL}\LL(\pi_t^{\Phi(n)}(\cdot|z_t)\|\tilde{\pi}_t^{\Phi(n)}(\cdot|z_t)\RR)}.
\end{equation}
By the log-linearization result in Prop. \ref{prop:log-linearization}, we have
\begin{equation}
    \|\nabla\ln\tilde{\pi}_t^{\Phi(n)}(a_t|z_t)-\nabla\ln{\pi_t^{\Phi(n)}}(a_t|z_t)\|_2 \leq \frac{2\beta_t\|\rho\|_2}{\sqrt{m}} + \sqrt{12}L_t\|\rho\|_2\sqrt{\frac{\Lambda_t^2\varrho_2+\chi_t\varrho_1}{\sqrt{m}}}.
\end{equation}
Thus, we have
$$ 
\LL(\nabla\ln\tilde{\pi}_t^{\Phi(n)}(a_t|z_t)-\nabla\ln{\pi_t^{\Phi(n)}}(a_t|z_t)\RR)^\top \omega_n \leq \|\rho\|_2^2\LL(\frac{2\beta_t}{\sqrt{m}} + \sqrt{12}L_t\frac{\sqrt{\Lambda_t\varrho_2+\chi_t\varrho_1}}{m^{1/4}}\RR).
$$

\end{proof}
\begin{proof}[Proof of Lemma \ref{lemma:p-pdl}]
For any $y_0\in\bY$, we have:
\begin{align*}
    \cV^{\pi'}(y_0)-\cV^{\pi}(y_0) &= \bE_\mu^{\pi'}\Big[\sum_{t=0}^\infty\gamma^t r_t\Big|Z_0=y_0\Big]-\cV^\pi(y_0),\\
    &= \bE_\mu^{\pi'}\Big[\sum_{t=0}^\infty\gamma^t\Big(r_t+\cV_t^\pi(Z_t)-\cV_t^\pi(Z_t)\Big)\Big|Z_0=y_0\Big]-\cV^\pi(y_0),\\
    &= \bE_\mu^{\pi'}\Big[\sum_{t=0}^\infty\gamma^t(r_t+\gamma\cV_{t+1}^\pi(Z_{t+1})-\cV_t^\pi(Z_t)\Big|Z_0=y_0\Big],
\end{align*}
where $r_t = r(S_t,A_t)$ and the last identity holds since $$\sum_{t=0}^\infty \gamma^t \cV_t^\pi(z_t) = \cV_0^\pi(z_0)+\gamma\sum_{t=0}^{\infty}\gamma^t\cV_{t+1}^\pi(z_{t+1}).$$ Then, letting $r_t = r(s_t,a_t)$ and by using law of iterated expectations,
\begin{equation}
    \cV^{\pi'}(y_0)-\cV^{\pi}(y_0) = \bE_\mu^{\pi'}\Big[\sum_{t=0}^\infty\gamma^t\Big(\bE^{\pi'}[r_t+\gamma \cV_{t+1}^\pi(Z_{t+1})|\Z_t, S_t]-\cV_t^\pi(Z_t)\Big)\Big|Z_0=y_0\Big],
    \label{eqn:p-pdl-a}
\end{equation}
which holds because $$\bE^{\pi'}[r_t+\gamma \cV^\pi(Z_{t+1})|\Z_t] = \bE^{\pi'}[r_t+\gamma \cV^\pi(Z_{t+1})|\Z_t, Z_0].$$ 

The conditional expectation of $r_t+\gamma\cV_{t+1}^{\pi}$ given $\{\Z_t=\z_t\}$ is independent of $\pi'$:
\begin{align*}
    \bE^{\pi'}[r_t+\gamma\cV^\pi(Z_{t+1})|\Z_t] &= \sum_{s\in\bS}b_t(s)\bE^{\pi'}[r_t+\gamma\cV^\pi_{t+1}(Z_{t+1})|\Z_t=\z_t,S_t=s],\\
    &= \sum_{s_t,s_{t+1}\in\bS}\sum_{y\in\bY}b_t(s_t) \LL(r(s_t,A_t) + \gamma\cP(s_{t+1}|s_t,A_t)\phi(y|s_{t+1})\cV_{t+1}^\pi(Z_t,y_{t+1})\RR),\\
    &= \bE[r_t+\gamma\cV_{t+1}^\pi(Z_{t+1})|\Z_t=\z_t],
\end{align*}
based on Prop. \ref{prop:cond-independence}. We also know from Prop. \ref{prop:bellman} that
\begin{equation*}
    \bE^{\pi'}[r_t+\gamma\cV_{t+1}^\pi(Z_{t+1})|\Z_t=\z_t]=\bE[r_t+\gamma\cV_{t+1}^\pi(Z_{t+1})|\Z_t=\z_t]=\cQ_t^\pi(\z_t).
\end{equation*}
Using the above identity in \eqref{eqn:p-pdl-a}, we obtain
\begin{equation}
    \cV^{\pi'}(y_0)-\cV^{\pi}(y_0) = \bE_\mu^{\pi'}\Big[\sum_{t=0}^\infty\gamma^t\Big(\cQ^\pi_t(\Z_t)-\cV^\pi(Z_t)\Big)\Big|Z_0=y_0\Big],
\end{equation}
which concludes the proof.
\end{proof}

\begin{proof}[Proof of Prop. \ref{prop:decomposition}]
    For any $\omega$, we have
    \begin{equation}
        \ell_T(\omega;\Phi(n),\cQ^\pol{n}) \leq 2\ell_T(\omega;\Phi(n),\hat{\cQ}\ui{n} )+ 2\sum_{t=0}^\infty\gamma^t(\cA_t^\pol{n}(Z_t,A_t)-\hat{\cA}_t\ui{n}(Z_t,A_t))^2.
    \end{equation}
    Let $\cG_n:=\sigma(\Phi(k),k\leq n)$ and $\ccH_n := \sigma(\bar{\Theta}\ui{n},\Phi(k),k\leq n)$. Then, since $$\varepsilon_{\msf{sgd},n}=\bE[\ell_T(\omega_n;\Phi(n),\hat{\cQ}\ui{n})|\ccH_n]-\inf_{\omega\in\ball\ui{m}(0,\rho)}\bE[\ell_T(\omega;\Phi(n),\hat{\cQ}\ui{n} )|\ccH_n],$$ we obtain
    \begin{equation}
        \bE[\ell_T(\omega_n;\Phi(n),\cQ^\pol{n})|\ccH_n] \leq 2\bE\Big[\inf_\omega\bE[\ell_T(\omega;\Phi(n),\hat{\cQ}\ui{n})|\ccH_n]\Big|\cG_n\Big]+ 2(\varepsilon_{\msf{td},n}+\varepsilon_{\msf{sgd},n}),
    \end{equation}
    which uses the fact that $Var(X|\cG_n)\leq \bE[|X|^2|\cG_n]$ for any square-integrable $X$.
    We also have 
    \begin{equation}
         \inf_\omega\bE[\ell_T(\omega;\Phi(n),\hat{\cQ}\ui{n} )|\ccH_n]\leq 2\inf_\omega\bE[\ell_T(\omega;\Phi(n),\cQ^\pol{n})|\ccH_n] + 2\sum_{t=0}^\infty\gamma^t(\cA_t^\pol{n}(Z_t,A_t)-\hat{\cA}_t\ui{n}(Z_t,A_t))^2,
    \end{equation}
    which further implies that
    \begin{equation*}
        \bE[\inf_\omega\bE[\ell_T(\omega;\Phi(n),\hat{\cQ}\ui{n} )|\ccH_n]|\cG_n]\leq 2\bE[\inf_\omega\bE[\ell_T(\omega;\Phi(n),\cQ^\pol{n})|\ccH_n]|\cG_n] +2\varepsilon_{\msf{td},n}.
    \end{equation*}
    Thus,
    \begin{equation}
        \bE[\ell_T(\omega_n;\Phi(n),\cQ^\pol{n})|\ccH_n] \leq 4\bE\Big[\inf_\omega\bE[\ell_T(\omega;\Phi(n),{\cQ}^\pol{n})|\ccH_n]\Big|\cG_n\Big]+ 6\varepsilon_{\msf{td},n}+2\varepsilon_{\msf{sgd},n}.
    \end{equation}
    For any $\omega\in\ball\ui{m}(0,\rho)$, \begin{align*}\bE[\ell_T(\omega;\Phi(n),\cQ^\pol{n})|\ccH_n] &\leq \bE[\sum_{t<T}\gamma^t(\nabla_\Phi^\top F_t(\Z_t;\Phi(n))\omega-\cQ_t^\pol{n}(\Z_t))^2|\ccH_n],\\
    &\leq 2\bE[\sum_{t<T}\gamma^t(\nabla_\Phi^\top F_t(\Z_t;\Phi(0))\omega-\cQ_t^\pol{n}(\Z_t))^2+(\nabla F_t(\Z_t;\Phi(n))-\nabla F_t(\Z_t;\Phi(0))^\top\omega)^2|\ccH_n],
    \end{align*}
which implies that
\begin{align*}
    \inf_\omega\bE[\ell_T(\omega;\Phi(n),\cQ^\pol{n})|\ccH_n] &\leq 2\varepsilon_{\msf{app},n} + 2\|\rho\|_2^2\bE[\sum_{t<T}\gamma^t\|\nabla F_t(\Z_t;\Phi(n))-\nabla F_t(\Z_t;\Phi(0)\|_2^2|\ccH_n],\\
    &\leq 2\varepsilon_{\msf{app},n} + \frac{2\|\rho\|_2^4}{m}\sum_{t<T}\gamma^t\beta_t^2,
\end{align*}
using \eqref{eqn:smoothness-f}. Hence,
$$
\bE[\ell_T(\omega_n;\Phi(n),\cQ^\pol{n})|\ccH_n] \leq \frac{8\|\rho\|_2^4}{m}\sum_{t<T}\gamma^t\beta_t^2 +8\varepsilon_{\msf{app},n}+ 6\varepsilon_{\msf{td},n}+2\varepsilon_{\msf{sgd},n},
$$
concluding the proof.
\end{proof}

\begin{proof}[Proof of Prop. \ref{prop:approximation}]
    Under Assumption \ref{assumption:ntk}, consider $f_t\ui{j}(\z_t) := \bE[\psi_t^\top(\z_t;\phi_0)\bv\ui{j}(\phi_0)]$ for $\bv\ui{j}\in\cH_{\cJ,\nu}$. Let
    \begin{equation}
        \omega\ui{j}_i := \frac{1}{\sqrt{m}}c_i\bv\ui{j}(\Phi_i(0)),~i=1,2,\ldots,m,
    \end{equation}
    for any $j\in \cJ$. Since $\|\omega\ui{j}\|_2\leq \|\nu\|_2$ and $\rho \succeq \nu$, we have
    \begin{equation}
        \inf_{\omega\in\ball\ui{m}(0,\rho)}\LL|\nabla^\top F_t(\z_t;\Phi(0))\omega-f_t\ui{j}(\z_t)\RR|\leq \LL|\nabla^\top F_t(\z_t;\Phi(0))\omega\ui{j}-f_t\ui{j}(\z_t)\RR|.
    \end{equation}
    Thus, we aim to find a uniform upper bound for the second term over $j\in\cJ$. For each $\z_t$, we have $$\nabla^\top F_t(\z_t;\Phi(0))\omega\ui{j} = \frac{1}{m}\sum_{i=1}^m\nabla_{\Phi_i}^\top H_t\ui{i}(\z_t;\Phi_i(0))\bv\ui{j}(\Phi_i(0)),$$ thus $\bE[\nabla^\top F_t(\z_t;\Phi(0))\omega\ui{j}]=f_t\ui{j}(\z_t)$. Furthermore, from Lemma \ref{lemma:prelim}, since $\Phi(0)\in\Omega_{\rho,m}$ obviously, we have $$\max_{1\leq i \leq m}\|\nabla_{\Phi_i}^\top H_t\ui{i}(\z_t;\Phi_i(0))\bv\ui{j}(\Phi_i(0))\|_2\leq L_t\|\nu\|_2\leq L_t\|\rho\|_2,~\mbox{a.s.}.$$
    Thus, by McDiarmid's inequality \cite{mohri2018foundations}, we have with probability at least $1-\delta$,
    \begin{equation}
        \sup_{j\in\cJ}\LL|\nabla^\top F_t(\z_t;\Phi(0))\omega\ui{j}-f_t\ui{j}(\z_t)\RR| \leq 2\mathrm{Rad}_m(G_t^{\z_t}) + L_t\|\rho\|_2\sqrt{\frac{\log(2/\delta)}{m}},
    \end{equation}
    for each $t <T$ and $\z_t$. By union bound, 
    \begin{align}
        \sup_{j\in\cJ}\max_{\z_t}\LL|\nabla^\top F_t(\z_t;\Phi(0))\omega\ui{j}-f_t\ui{j}(\z_t)\RR| &\leq 2\max_{\z_t}\mathrm{Rad}_m(G_t^{\z_t}) + L_t\|\rho\|_2\sqrt{\frac{\log(2T|\bY\times\bA|^{t+1}/\delta)}{m}},\\
        &\leq 2\max_{0\leq t < T}\max_{\z_t}\mathrm{Rad}_m(G_t^{\z_t}) + L_T\|\rho\|_2\sqrt{\frac{\log(2T|\bY\times\bA|^{T}/\delta)}{m}},
    \end{align}
    simultaneously for all $t<T$ with probability $\geq 1-\delta$. Therefore,
    \begin{align*}
    \inf_{\omega}\bE_\mu^{\pol{n}}\sum_{t<T}\gamma^t|\nabla^\top F_t(\Z_t;\Phi(0))\omega - f_t\ui{j}|^2 &\leq \bE_\mu^{\pol{n}}\sum_{t<T}\gamma^t\sup_{j\in\cJ}|\nabla^\top F_t(\Z_t;\Phi(0))\omega\ui{j} - f_t\ui{j}|^2,\\
    &\leq \frac{1}{1-\gamma}\LL(2\max_{0\leq t < T}\max_{\z_t}\mathrm{Rad}_m(G_t^{\z_t}) + L_T\|\rho\|_2\sqrt{\frac{\log(2T|\bY\times\bA|^{T}/\delta)}{m}}\RR)^2.
    \end{align*}
\end{proof}
\end{document}